\newcommand{\e}{\epsilon}
\newcommand{\phie}{\varphi_{\epsilon}}
\newcommand{\ue}{u_{\epsilon}}
\newcommand{\mue}{\mu_{\epsilon}}
\newcommand{\Fe}{F_{\epsilon}}
\newcommand{\Fie}{F_{1\epsilon}}
\newcommand{\UU}{\color{blue}}
\newcommand{\EE}{\color{black}}
\newtheorem{lem}{Lemma}
\newtheorem{thm}{Theorem}
\newtheorem{defn}{Definition}
\newtheorem{prop}{Proposition}
\newtheorem{cor}{Corollary}
\newtheorem{oss}{Remark}
\begin{document}

\title{\large\textbf{NONLOCAL CAHN-HILLIARD-NAVIER-STOKES
SYSTEMS WITH SINGULAR POTENTIALS}}

\author{
{\sc Sergio Frigeri}\\
Dipartimento di Matematica {\it F. Enriques}
\\Universit\`{a} degli Studi di Milano\\Milano I-20133, Italy\\
\textit{sergio.frigeri@unimi.it}
\\
\\
{\sc Maurizio Grasselli}\\
Dipartimento di Matematica {\it F. Brioschi}\\
Politecnico di Milano\\
Milano I-20133, Italy \\
\textit{maurizio.grasselli@polimi.it}}

\maketitle

%%%%%%%%%%%%%%%%%%%%%%%%%%%%%%%%%%%%%%%%%%%%%%%%%%%%%%%%%%%%%%%%%%%%%%%%%%%%%%%%%%%%%%%%%%%%%%%%%%%%%%%%%%%%%%%%%%%%%%%%%%%
\begin{abstract}\noindent
Here we consider a Cahn-Hilliard-Navier-Stokes system characterized
by a nonlocal Cahn-Hilliard equation with a singular (e.g., logarithmic)
potential. This system originates from a diffuse interface model for
incompressible isothermal mixtures of two immiscible fluids. We have
already analyzed the case of smooth potentials with arbitrary
polynomial growth. Here, taking advantage of the previous results, we
study this more challenging (and physically relevant) case. We first
establish the existence of a global weak solution with no-slip and
no-flux boundary conditions. Then we prove the existence of the global
attractor for the 2D generalized semiflow (in the sense of J.M.~Ball).
We recall that uniqueness is still an open issue even in 2D. We also
obtain, as byproduct, the existence of a connected global attractor for
the (convective) nonlocal Cahn-Hilliard equation. Finally, in the 3D
case, we establish the existence of a trajectory attractor (in the sense
of V.V.~Chepyzhov and M.I.~Vishik).
\\ \\
\noindent \textbf{Keywords}: Navier-Stokes equations, nonlocal
Cahn-Hilliard equations, singular potentials, incompressible binary
fluids, global attractors, trajectory attractors.
\\
\\
\textbf{AMS Subject Classification 2010}: 35Q30, 37L30, 45K05,
76T99.\end{abstract}

\section{Introduction}
\setcounter{equation}{0} In \cite{CFG} we have introduced and
analyzed an evolution system which consists of the Navier-Stokes
equations for the fluid velocity $u$ suitably coupled with a non-local
convective Cahn-Hilliard equation for the order parameter $\varphi$
on a given (smooth) bounded domain
$\Omega\subset\mathbb{R}^d$, $d=2,3$. This system derives from
a diffuse interface model which describes the evolution of an
incompressible mixture of two immiscible fluids (see, e.g.,
\cite{GPV,HH,HMR,JV,M} and references therein). We suppose that
the temperature variations are negligible and the density is constant
and equal to one. Thus $u$ represents an average velocity and
$\varphi$ the relative concentration of one fluid (or the difference of
the two concentrations). Then the nonlocal
Cahn-Hilliard-Navier-Stokes system reads as follows
\begin{align}
&\varphi_t+u\cdot\nabla\varphi=\Delta\mu,\label{eq1}\\
&u_t-\mbox{div}(2\nu(\varphi)Du)+(u\cdot\nabla)u+\nabla\pi
=\mu\nabla\varphi+h,\label{eq2}\\
&\mu=a\varphi-J\ast\varphi+F'(\varphi),\label{eq3}\\
&\mbox{div}(u)=0,\label{eq4}
\end{align}
in $\Omega\times (0,+\infty)$. We endow the system with the
boundary and initial conditions
\begin{align}
&\frac{\partial\mu}{\partial n}=0,\quad u=0,
\quad\mbox{on }\partial\Omega,\label{eq5}\\
&u(0)=u_0,\quad\varphi(0)=\varphi_0,
 \quad\mbox{in }\Omega,\label{eq6}
\end{align}
where $n$ is the unit outward normal to $\partial\Omega$. Here
$\nu$ is the viscosity, $\pi$ the pressure, $h$ denotes an external
force acting on the fluid mixture, $J:\mathbb{R}^d \to \mathbb{R}$
is a suitable interaction kernel, $a$ is a coefficient depending on $J$
(see section below for the related assumptions), $F$ is the
configuration potential which accounts for the presence of two phases.

Here we prove the existence of a global weak solution when the
double-well potential $F$ is assumed to be singular in $(-1,1)$, that
is, its derivative is unbounded at the endpoints. A typical situation of
physical interest is the following (see \cite{CH})
\begin{align}
&F(s) = \frac{\theta}{2}((1+s)\log(1+s)+(1-s)\log(1-s)) -\frac{\theta_c}{2}s^2,
\label{log}
\end{align}
where $\theta$, $\theta_c$ are the (absolute) temperature and the critical temperature, respectively.
If $0<\theta< \theta_c$ then phase separation occurs, otherwise the mixed phase is stable.
We recall that the logarithmic terms are related to the entropy of the system.

For the existence of a weak solution, we take advantage of our
previous analysis for regular potentials (i.e., defined on the whole
$\mathbb{R}$) with polynomially controlled growth of arbitrary order
(see \cite{CFG}) and we use a suitable approximation procedure
inspired by \cite{EG}. Then, we extend to potentials like \eqref{log}
the results obtained in \cite{FG} for regular potentials. Such results
are concerned with the global longtime behavior of (weak) solutions.
More precisely, in the spirit of \cite{Ba}, we can define a generalized
semiflow in 2D and prove that it possesses a global (strong) attractor
by using the energy identity. Then we analyze the 3D case by means of
the trajectory approach introduced in \cite{Se} and generalized in
\cite{CV,CV2}. In this framework, we show the existence of a
trajectory attractor.

We recall that the chemical potential of the corresponding local
Cahn-Hilliard-Navier-Stokes system is given by $\mu = -\Delta\varphi
+ F'(\varphi)$. Therefore it can be seen as an approximation of the
nonlocal one (cf. \cite{CFG} and references therein). The local system
with a singular potential has been analyzed in \cite{A1,A2,B} (for
regular potentials see, e.g., \cite{GG1,GG2,S,ZWH} and references
therein). Most of the results known for the Navier-Stokes equations
essentially hold for the coupled (local) system as well. On the
contrary, in the nonlocal case, due to the weaker smoothness of
$\varphi$, proving uniqueness and/or getting higher-order estimates
seem a non-trivial task even in dimension two (see \cite{CFG,FG}).

We conclude by observing that the technique we use in 2D can be
easily adapted to show that the (convective) Cahn-Hilliard equation
with a singular potential has a connected global (strong) attractor (for
regular potentials see \cite{FG} and references therein, cf. also
\cite{AW,DD} for results on the local case).

The plan goes as follows. In the next section, we introduce the weak
formulation of our problem. Then we state the existence
theorem whose proof is given in Section 3.
Section~4 is devoted to the global attractor in 2D, while Section~5 is concerned with the
existence of the trajectory attractor.

\section{Weak solutions and existence theorem}
\setcounter{equation}{0} Let us set $H:=L^2(\Omega)$ and
$V:=H^1(\Omega)$. For every $f\in V'$ we denote by $\overline{f}$
the average of $f$ over $\Omega$, i.e.,
$$\overline{f}:=\frac{1}{|\Omega|}\langle f,1\rangle.$$
Here $|\Omega|$ stands for the Lebesgue measure of
$\Omega$.

Then we introduce the spaces
$$V_0:=\{v\in V:\overline{v}=0\},\qquad V_0':=\{f\in V':\overline{f}=0\},$$
and the operator $A:V\to V'$, $A\in\mathcal{L}(V,V')$ defined by
$$\langle Au,v\rangle:=\int_{\Omega}\nabla u\cdot\nabla v\qquad\forall u,v\in V.$$
We recall that $A$ maps $V$ onto $V_0'$ and the restriction of $A$ to $V_0$ maps $V_0$ onto $V_0'$
isomorphically. Let us denote by $\mathcal{N}:V_0'\to V_0$ the inverse map defined by
$$A\mathcal{N}f=f,\quad\forall f\in V_0'\qquad\mbox{and}\qquad\mathcal{N}Au=u,\quad\forall u\in V_0.$$
As is well known, for every $f\in V_0'$, $\mathcal{N}f$ is the unique
solution with zero mean value of the Neumann problem
\begin{equation*}
\left\{\begin{array}{ll}
-\Delta u=f,\qquad\mbox{in }\Omega\\
\frac{\partial u}{\partial n}=0,\qquad\mbox{on }\partial\Omega.
\end{array}\right.
\end{equation*}
Furthermore, the following relations hold
\begin{align}
&\langle Au,\mathcal{N}f\rangle=\langle f,u\rangle,\qquad\forall u\in V,\quad\forall f\in V_0',\\
&\langle f,\mathcal{N}g\rangle=\langle g,\mathcal{N}f\rangle=\int_{\Omega}\nabla(\mathcal{N}f)
\cdot\nabla(\mathcal{N}g),\qquad\forall f,g\in V_0'.
\end{align}
We also consider the standard Hilbert spaces for the Navier-Stokes
equations (see, e.g., \cite{T})
$$
G_{div}:=\overline{\{u\in
C^\infty_0(\Omega)^d:\mbox{
div}(u)=0\}}^{L^2(\Omega)^d},\quad
V_{div}:=\{u\in H_0^1(\Omega)^d:\mbox{ div}(u)=0\}.
$$
We denote by $\|\cdot\|$ and $(\cdot,\cdot)$ the norm and the
scalar product on both $H$ and $G_{div}$, respectively. We recall
that $V_{div}$ is endowed with the scalar product
$$(u,v)_{V_{div}}=(\nabla u,\nabla v),\qquad\forall u,v\in V_{div}.$$
We shall also use the definition of the Stokes operator $S$
with no-slip boundary condition. More precisely, $S:D(S)\subset G_{div}\to G_{div}$ is defined as
$S:=-P\Delta$  with domain $D(S)=H^2(\Omega)^d\cap V_{div}$,
where $P:L^2(\Omega)^d\to G_{div}$ is the Leray projector. Notice that we have
$$(Su,v)=(u,v)_{V_{div}}=(\nabla u,\nabla v),\qquad\forall u\in D(S),\quad\forall v\in V_{div}$$
and $S^{-1}:G_{div}\to G_{div}$ is a self-adjoint compact operator in $G_{div}$.
Thus, according with classical spectral theorems,
it possesses a sequence $\{\lambda_j\}$ with $0<\lambda_1\leq\lambda_2\leq\cdots$ and $\lambda_j\to\infty$,
and a family $\{w_j\}\subset D(S)$ of eigenfunctions which is orthonormal in $G_{div}$.
It is also convenient to recall that the trilinear form $b$ which appears in the weak formulation of the
Navier-Stokes equations is defined as follows
$$b(u,v,w)=\int_{\Omega}(u\cdot\nabla)v\cdot w,\qquad\forall u,v,w\in V_{div}.$$

We suppose that the potential $F$ can be written in the following form
$$F=F_1+F_2,$$
where $F_1\in C^{(2+2q)}(-1,1)$, with $q$ a fixed positive integer,
and $F_2\in C^2([-1,1])$.

We can now list the assumptions on the kernel $J$, on the viscosity
$\nu$, on $F_1$, $F_2$ and on the forcing term $h$.

\begin{description}
\item[(A1)]$J\in W^{1,1}(\mathbb{R}^d),\quad
    J(x)=J(-x),\quad a(x) := \displaystyle
\int_{\Omega}J(x-y)dy \geq 0,\quad\mbox{a.e. } x\in\Omega$.
\item[(A2)]The function $\nu$ is locally Lipschitz on $\mathbb{R}$ and there exist $\nu_1,\nu_2>0$ such that
 $$\nu_1\leq\nu(s)\leq \nu_2,\qquad\forall s\in\mathbb{R}.$$
\item[(A3)]
There exist $c_1>0$ and $\e_0>0$ such that
\begin{align*}
&F_1^{(2+2q)}(s)\geq c_1,\qquad\forall s\in(-1,-1+\e_0]\cup[1-\e_0,1).
%\mbox{near }s=\pm 1.\\
\end{align*}

\item[(A4)] There exists $\e_0>0$ such that, for each
    $k=0,1,\cdots, 2+2q$ and each $j=0,1,\cdots, q$,
\begin{align*}
&F_1^{(k)}(s)\geq 0,\qquad\forall s\in[1-\e_0,1),\\
%\mbox{near }s=1,\\
&F_1^{(2j+2)}(s)\geq 0,\qquad F_1^{(2j+1)}(s)\leq 0,\qquad\forall s\in(-1,-1+\e_0].
%\mbox{near }s=-1.
\end{align*}

\item[(A5)] There exists $\e_0>0$ such that $F_1^{(2+2q)}$ is
    non-decreasing in $[1-\e_0,1)$
%near $s=1$
and non-increasing in $(-1,-1+\e_0]$.
%near $s=-1$.

\item[(A6)]There exist $\alpha,\beta\in\mathbb{R}$ with $\alpha+\beta>-\min_{[-1,1]}F_2''$ such that
\begin{equation*}
F_1^{''}(s)\geq\alpha,\qquad\forall s\in(-1,1),\quad
a(x)\geq \beta,\qquad\mbox{a.e. }x\in\Omega.
\end{equation*}

\item[(A7)]$\lim_{s\to\pm 1}F_1'(s)=\pm\infty.$

\item[(A8)] $h\in L^2(0,T;V_{div}')$ for all $T>0$.

\end{description}

\begin{oss}
{\upshape Assumptions (A3)-(A7) are satisfied in the case of the
physically relevant logarithmic double-well potential \eqref{log} for
any fixed positive integer $q$. In particular, setting
$$F_1(s)=\frac{\theta}{2}((1+s)\log(1+s)+(1-s)\log(1-s)),\qquad F_2(s)=-\frac{\theta_c}{2}s^2,$$
then (A6) is satisfied if and only if $\beta>\theta_c-\theta$.
}
\end{oss}

\begin{oss}
{\upshape The requirement $a(x)\geq\beta$ a.e $x\in\Omega$ is
crucial (see \cite[Rem.2.1]{BH1}, cf. also \cite{BH2}). For example,
in the case of the double-well smooth potential $F(s)=(s^2-1)^2$,
which is usually taken as a fairly good smooth approximation of the
singular potential, the existence result in \cite{CFG} requires the
condition $a(x)\geq\beta$ with $\beta>4$. }
\end{oss}

The notion of weak solution to problem \eqref{eq1}-\eqref{eq6} is given by
\begin{defn}
\label{wsdefn}
Let $u_0\in G_{div}$, $\varphi_0\in H$ with $F(\varphi_0)\in L^1(\Omega)$ and $0<T<+\infty$ be given.
A couple $[u,\varphi]$ is a weak solution to \eqref{eq1}-\eqref{eq6} on $[0,T]$ corresponding to $[u_0,\varphi_0]$
if
\begin{itemize}
\item  $u$, $\varphi$ and $\mu$ satisfy
\begin{align}
&u\in L^{\infty}(0,T;G_{div})\cap L^2(0,T;V_{div}),\label{re1}\\
&u_t\in L^{4/3}(0,T;V_{div}'),\qquad\mbox{if}\quad d=3,\label{re2}\\
&u_t\in L^2(0,T;V_{div}'),\qquad\mbox{if}\quad d=2,\label{re3}\\
&\varphi\in L^{\infty}(0,T;H)\cap L^2(0,T;V),\label{re4}\\
&\varphi_t\in L^2(0,T;V'),\label{re5}\\
&\mu=a\varphi-J\ast\varphi+F'(\varphi)\in L^2(0,T;V),\label{prmu}
\end{align}
and
\begin{align}
&\varphi\in L^{\infty}(Q),\qquad|\varphi(x,t)|<1\quad\mbox{a.e. }(x,t)\in Q:=\Omega\times(0,T);\label{re7}
\end{align}

\item for every $\psi\in V$, every $v\in V_{div}$ and for almost
any $t\in(0,T)$ we have
\begin{align}
&\langle\varphi_t,\psi\rangle+(\nabla\mu,\nabla\psi)=(u,\varphi\nabla\psi),\label{weakfor1}\\
&\langle u_t,v\rangle+(2\nu(\varphi)Du,Dv)+b(u,u,v)=-(\varphi\nabla\mu,v)+\langle h,v\rangle;\label{weakfor2}
\end{align}

\item the initial conditions $u(0)=u_0$, $\varphi(0)=\varphi_0$ hold.

\end{itemize}

\end{defn}

\begin{thm}
\label{existence} Assume that (A1)-(A8) are satisfied for some fixed
positive integer $q$. Let $u_0\in G_{div}$, $\varphi_0\in
L^{\infty}(\Omega)$ such that $F(\varphi_0)\in L^1(\Omega)$. In
addition, assume that $|\overline{\varphi_0}|<1$. Then, for every
$T>0$ there exists a weak solution $z:=[u,\varphi]$ to
\eqref{eq1}-\eqref{eq6} on $[0,T]$ corresponding to
$[u_0,\varphi_0]$ such that
$\overline{\varphi}(t)=\overline{\varphi_0}$ for all $t\geq [0,T]$
and
\begin{align}
%&u\in L^{\infty}(0,T;G_{div})\cap L^2(0,T;V_{div}),\label{re1}\\
%&u_t\in L^{4/3}(0,T;V_{div}'),\qquad\mbox{if}\quad d=3,\label{re2}\\
%&u_t\in L^2(0,T;V_{div}'),\qquad\mbox{if}\quad d=2,\label{re3}\\
&\varphi\in L^{\infty}(0,T;L^{2+2q}(\Omega)).
\label{phiLq}
%&\varphi\in L^{\infty}(0,T;L^{2+2q}(\Omega))\cap L^2(0,T;V),\label{re4}\\
%&\varphi_t\in L^2(0,T;V'),\label{re5}\\
%&\mu\in L^2(0,T;V),\label{prmu}
\end{align}
%and
%\begin{align}
%&\varphi\in L^{\infty}(Q),\qquad|\varphi(x,t)|<1\quad\mbox{a.e. }(x,t)\in Q,\label{re7}
%\end{align}
%where $Q:=\Omega\times(0,T)$.
Furthermore, setting
$$\mathcal{E}(u(t),\varphi(t))=\frac{1}{2}\|u(t)\|^2+\frac{1}{4}
\int_{\Omega}\int_{\Omega}J(x-y)(\varphi(x,t)-\varphi(y,t))^2 dxdy+\int_{\Omega}F(\varphi(t)),$$
the following energy
inequality holds
\begin{equation}
\mathcal{E}(u(t),\varphi(t)) +\int_s^t\Big(2\Vert\sqrt{\nu(\varphi)}Du(\tau)\|^2+\|\nabla\mu(\tau)\|^2\Big)d\tau
\leq\mathcal{E}(u(s),\varphi(s))  +\int_s^t\langle h(\tau),u(\tau)\rangle d\tau,\label{ei}
\end{equation}
for all $t\geq s$ and for a.a. $s\in (0,\infty)$, including $s=0$.
If $d=2$, the weak solution $z:=[u,\varphi]$ satisfies
\begin{equation}
\frac{d}{dt}\mathcal{E}(u,\varphi)+2\Vert\sqrt{\nu(\varphi)}Du\Vert^2+\|\nabla\mu\|^2=\langle h,u\rangle,
\label{idendiff}
\end{equation}
i.e., equality holds in \eqref{ei} for every $t\geq 0$.
\end{thm}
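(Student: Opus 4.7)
The strategy is to approximate the singular potential $F_1$ by a family $\{F_{1,\e}\}_{\e>0}$ of regular potentials of polynomial growth $2+2q$, apply the existence theorem of \cite{CFG} to obtain approximate weak solutions $[\ue,\phie]$, derive uniform a priori bounds independent of $\e$, and pass to the limit, recovering in particular the separation property $|\varphi|<1$ a.e.\ on $Q$.

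For the approximation I extend $F_1$ from the interval $[-1+\e,1-\e]$ to all of $\mathbb{R}$ by a polynomial of degree $2+2q$ on each side, matching the first $2+2q$ derivatives of $F_1$ at the endpoints $\pm(1-\e)$. Assumption (A3) guarantees that the leading coefficient of this polynomial extension is bounded below by $c_1/(2+2q)!>0$ independently of $\e$; (A4) fixes the signs of the lower-order Taylor coefficients; (A5) gives the monotonicity of $F_{1,\e}^{(2+2q)}$ outside $(-1,1)$; and (A6) is preserved with the same $\alpha$. Setting $\Fe:=F_{1,\e}+F_2$, all hypotheses of the existence theorem in \cite{CFG} are met, so one obtains an approximating weak solution $[\ue,\phie]$ on $[0,T]$ with $\overline{\phie}(t)\equiv\overline{\varphi_0}$ and satisfying the full energy identity. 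Two further asymptotic properties of the extension will be essential: $F_{1,\e}\to F_1$ pointwise on $(-1,1)$, and $F_{1,\e}(s)\to+\infty$ for every fixed $|s|\geq 1$ as $\e\to 0^+$; both follow from (A7), since $|F_1'(\pm(1-\e))|\to+\infty$.

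The $\e$-level energy identity, combined with the uniform bound $\mathcal{E}_\e(u_0,\varphi_0)\leq C$ (a consequence of $\varphi_0\in L^\infty$, $F(\varphi_0)\in L^1$, and the polynomial control of $\Fe$), yields uniform bounds for $\ue$ in $L^\infty(0,T;G_{div})\cap L^2(0,T;V_{div})$, for $\phie$ in $L^\infty(0,T;H)\cap L^\infty(0,T;L^{2+2q}(\Omega))$ (via coercivity of $\Fe$), and for $\nabla\mue$ in $L^2(0,T;H)$. Testing the $\mu$-equation with $F_{1,\e}'(\phie)-\overline{F_{1,\e}'(\phie)}$ and controlling the spatial average then gives $\Fe'(\phie)$ uniformly bounded in $L^2(0,T;H)$, whence $\phie$ and $\mue$ are bounded in $L^2(0,T;V)$. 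Standard manipulations of the weak formulation also produce $\partial_t\phie$ in $L^2(0,T;V')$ and $\partial_t\ue$ in $L^{4/3}(0,T;V_{div}')$ (in $L^2(0,T;V_{div}')$ if $d=2$). Aubin--Lions compactness then yields, along a subsequence, $\ue\to u$ strongly in $L^2(0,T;G_{div})$, $\phie\to\varphi$ strongly in $L^2(0,T;H)$ and a.e.\ on $Q$, together with the expected weak convergences.

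The main obstacle is the identification of the chemical potential in the limit and the recovery of the constraint $|\varphi|<1$. For the separation property, Fatou's lemma applied to $F_{1,\e}(\phie)$ (shifted by a uniform lower bound), combined with the blow-up $F_{1,\e}(s)\to+\infty$ for $|s|\geq 1$ and the a.e.\ convergence $\phie\to\varphi$, forces $\varphi(x,t)\in(-1,1)$ for a.a.\ $(x,t)\in Q$ and simultaneously yields $F(\varphi)\in L^\infty(0,T;L^1(\Omega))$. Since $|\varphi|<1$ a.e., $\phie\to\varphi$ a.e., and $\Fe'(\phie)$ is uniformly bounded in $L^2(Q)$, a standard Egorov/Vitali argument identifies $\Fe'(\phie)\rightharpoonup F'(\varphi)$ weakly in $L^2(Q)$, which permits the passage to the limit in $\mue=a\phie-J\ast\phie+\Fe'(\phie)$. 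The remaining terms in \eqref{weakfor1}--\eqref{weakfor2} pass to the limit by routine arguments based on the convergences above, producing a weak solution in the sense of Definition~\ref{wsdefn} with $\overline{\varphi}(t)\equiv\overline{\varphi_0}$ and satisfying \eqref{phiLq}. The energy inequality \eqref{ei} is obtained by weak lower semicontinuity in the approximate identity, using that $\int_\Omega\Fe(\phie(s))\to\int_\Omega F(\varphi(s))$ for a.a.\ $s\geq 0$ (and at $s=0$ by construction). Finally, if $d=2$, the improved regularity $u_t\in L^2(0,T;V_{div}')$ makes $u$ admissible as a test function in \eqref{weakfor2}, yielding the differential identity \eqref{idendiff} for every $t\geq 0$.
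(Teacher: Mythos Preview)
Your overall strategy matches the paper's: approximate $F_1$ by polynomial extensions $\Fie$ matching derivatives up to order $2+2q$ at $\pm(1-\e)$, invoke \cite{CFG} for the approximate problems, derive uniform bounds, and pass to the limit by compactness. However, there is a genuine gap in your argument for the strict separation property $|\varphi|<1$.

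Your Fatou argument rests on the claim that $\Fie(s)\to+\infty$ for every fixed $|s|\geq 1$, but this is false at $s=\pm 1$ whenever $F_1$ is bounded on $(-1,1)$ --- which is precisely the case of the physical potential \eqref{log}. From the Taylor representation one has $\Fie(1)=\sum_{k=0}^{2+2q}\frac{1}{k!}F_1^{(k)}(1-\e)\,\e^k$, and for the logarithmic $F_1$ every term stays bounded (for instance $F_1'(1-\e)\cdot\e\sim-\tfrac{\theta}{2}\e\log\e\to 0$, while $F_1(1-\e)\to\theta\log 2$). Hence your argument yields only $|\varphi|\leq 1$ a.e., which is not enough to identify the limit of $\Fe'(\phie)$ as $F'(\varphi)$. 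The paper instead works with the \emph{derivative}: after obtaining the bound $\|H_{\e}'(\phie)\|_{L^1(\Omega)}\leq L_{\overline{\varphi_0}}\in L^2(0,T)$ via a Kenmochi-type inequality (this is where the hypothesis $|\overline{\varphi_0}|<1$ genuinely enters, and where your phrase ``controlling the spatial average'' hides a nontrivial step), it runs a Debussche--Dettori level-set argument exploiting (A7), i.e.\ $H'(1-\eta)\to+\infty$ as $\eta\to 0$, to conclude $|\{|\varphi|\geq 1\}|=0$. You could repair your route by applying Fatou to $|\Fe'(\phie)|$ rather than to $\Fe(\phie)$, since (A4) and (A7) do force $\liminf_{\e}|\Fe'(\phie(x,t))|=+\infty$ whenever $\phie(x,t)\to\pm 1$. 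Two secondary points: the convergence $\int_\Omega\Fe(\phie(s))\to\int_\Omega F(\varphi(s))$ you invoke for the energy inequality at a.a.\ $s$ needs the limsup direction, which the paper supplies by a convexity argument (writing $\Fe=G_{\e}+\tfrac{\alpha_{\ast}}{2}s^2$ with $G_{\e}$ convex and using the uniform $L^2(Q)$ bound on $G_{\e}'(\phie)$ together with the strong $L^2$ convergence of $\phie$); and $F_2\in C^2([-1,1])$ must be extended to $\mathbb{R}$ for the approximate problem, while in $d=3$ the solutions from \cite{CFG} satisfy only an energy inequality, not the identity you assert.
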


Recalling \cite[Corollary 1, Proposition 5]{FG}, we can also deduce an
existence (and uniqueness) result for the convective nonlocal
Cahn-Hilliard equation with a given velocity field.
\begin{cor}
\label{NLCH1} Assume that (A1) and (A3)-(A7) are satisfied for some
fixed positive integer $q$. Let $u\in
L^2_{loc}([0,\infty);V_{div}\cap L^\infty(\Omega)^d)$ be given
and let $\varphi_0\in L^{\infty}(\Omega)$ such that
$F(\varphi_0)\in L^1(\Omega)$. In addition, suppose that
$|\overline{\varphi_0}|<1$. Then, for every $T>0$, there exists a
unique $\varphi \in L^2(0,T;V)\cap H^1(0,T;V^\prime)$ which
fulfills \eqref{re7} and \eqref{phiLq},  solves \eqref{weakfor1} on
$[0,T]$ with $\mu$ given by \eqref{prmu} and initial condition
$\varphi(0)=\varphi_0$. In addition, for all $t\geq 0$, we have
$(\varphi(t),1)=(\varphi_0,1)$ and the following energy identity
holds
\begin{equation}
\frac{d}{dt}\left(\frac{1}{4}
\int_{\Omega}\int_{\Omega}J(x-y)(\varphi(x,t)-\varphi(y,t))^2 dxdy
+\int_{\Omega}F(\varphi(t))\right) +\|\nabla\mu\|^2 =
(u\varphi, \nabla\mu).
\label{energyCH}
\end{equation}
\end{cor}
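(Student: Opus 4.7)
The strategy is to combine the approximation scheme of the proof of Theorem~\ref{existence} (inspired by \cite{EG}) with the existence and uniqueness results of \cite[Corollary 1, Proposition 5]{FG} for the convective nonlocal Cahn--Hilliard equation with regular potentials. Since $u$ is prescribed, the coupling with the Navier--Stokes block drops out of the argument, which simplifies matters considerably.

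First I would approximate the singular part $F_1$ by a sequence of globally defined potentials $\Fie \in C^{2+2q}(\mathbb{R})$ with polynomial growth of order $2+2q$, built as in Section~3 by truncating $F_1^{(2+2q)}$ outside $[-1+\e,1-\e]$ and antidifferentiating; the monotonicity and sign properties encoded in (A3)--(A5) transfer to $\Fie$ uniformly in $\e$. For each $\e>0$, \cite[Corollary 1]{FG} applied with the given field $u$ yields a unique $\phie\in L^2(0,T;V)\cap H^1(0,T;V')$ solving the approximate equation with $\mue = a\phie - J*\phie + \Fie'(\phie) + F_2'(\phie) \in L^2(0,T;V)$, together with the corresponding energy identity.

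The core of the argument is the derivation of $\e$-independent estimates. The approximate energy identity, together with (A6) and the integrability of $u\in L^2(V_{div}\cap L^\infty)$ (the $L^\infty$-norm absorbs the convective term $(u\phie,\nabla\mue)$ via Cauchy--Schwarz and Young), produces uniform bounds for $\phie$ in $L^\infty(0,T;H)$, for $\Fe(\phie)$ in $L^\infty(0,T;L^1)$ and for $\nabla\mue$ in $L^2(0,T;H)$; spatial mean is conserved by testing with $1$. The main technical obstacle is the uniform strict separation from $\pm 1$: this is achieved by the Moser-type iteration already used in Section~3, which tests the equation against carefully chosen powers of $\Fie^{(2j+1)}(\phie)$ for $j=0,\ldots,q$ and propagates bounds along the even derivatives by (A3)--(A5), eventually yielding $\Fie'(\phie)$ bounded in $L^\infty(0,T;L^2)$ uniformly in $\e$. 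This rules out concentration of mass at $\pm 1$ in the limit and forces $|\varphi(x,t)|<1$ a.e.

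With these estimates in hand the remaining steps are standard. Aubin--Lions compactness produces a limit $\varphi$ satisfying \eqref{weakfor1}, \eqref{re7} and \eqref{phiLq}; the $L^2(0,T;V)$ regularity follows by inverting the identity $\nabla\mu = (a+F''(\varphi))\nabla\varphi + (\nabla a)\varphi - (\nabla J)*\varphi$ for $\nabla\varphi$ and using the coercivity $a+F_1''(\varphi)+F_2''(\varphi)\geq\alpha+\beta+\min F_2''>0$ granted by (A6), and $\varphi_t\in L^2(0,T;V')$ comes from the equation itself once $u\varphi\in L^2(0,T;L^2)$ is known. Uniqueness follows from \cite[Proposition 5]{FG}: once $|\varphi|<1$ a.e.\ has been established and $F'(\varphi)\in L^2(0,T;H)$, the standard argument (difference of two solutions tested with $\mathcal{N}(\varphi_1-\varphi_2)$, monotonicity of the singular part together with (A6), Gronwall) applies unchanged. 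Finally, the energy identity \eqref{energyCH} is obtained by taking $\mu$ as a test function in \eqref{weakfor1} --- admissible since $\mu\in L^2(0,T;V)$ and $\varphi_t\in L^2(0,T;V')$ --- and rewriting $\langle\varphi_t,a\varphi-J*\varphi\rangle$ as the time derivative of the nonlocal part of the free energy via the symmetry of $J$.
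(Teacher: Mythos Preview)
Your overall strategy is correct and matches the paper's (implicit) proof: approximate $F$ by the regularized potentials $\Fe$ of Section~3, invoke \cite[Corollary~1]{FG} for each $\e$, pass to the limit using the $\e$-independent estimates, and obtain uniqueness from \cite[Proposition~5]{FG}. The energy-identity argument at the end is also fine.

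However, your description of the central technical step --- establishing $|\varphi|<1$ a.e.\ --- is wrong. Section~3 contains no ``Moser-type iteration'' and never tests against ``powers of $\Fie^{(2j+1)}(\phie)$''; no such argument is present in the paper and it is unclear how it would produce the claimed uniform $L^\infty(0,T;L^2)$ bound on $\Fie'(\phie)$. What Section~3 actually does is: (i) test the equation against $\mathcal{N}\big(\Fe'(\phie)-\overline{\Fe'(\phie)}\big)$ to obtain an $L^2$ estimate for $\Fe'(\phie)-\overline{\Fe'(\phie)}$; (ii) apply the Kenmochi--Niezg\'odka--Pawlow argument (which uses the hypothesis $|\overline{\varphi_0}|<1$ in an essential way) to the monotone function $H_\e'$ to get $\Vert \Fe'(\phie)\Vert_{L^1(\Omega)}\leq L_{\overline{\varphi_0}}\in L^2(0,T)$, hence a bound on $\overline{\mue}$ and on $\Vert\mue\Vert_{L^2(0,T;V)}$; and (iii) deduce $|\varphi|<1$ a.e.\ via the Debussche--Dettori measure argument, i.e.\ controlling $|E_{1,\eta}^\e|$, $|E_{2,\eta}^\e|$ by $\Vert H_\e'(\phie)\Vert_{L^1}/|H_\e'(\pm(1-\eta))|$ and letting $\e\to 0$, then $\eta\to 0$ using (A7). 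These are the steps you need to transplant to the case of a prescribed $u$; they go through unchanged because the only way the velocity enters is via the term $\ue\cdot\nabla\phie$, whose $V_0'$-norm is controlled by the assumed regularity $u\in L^2_{loc}([0,\infty);V_{div}\cap L^\infty(\Omega)^d)$.

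In short: replace the fictitious Moser iteration by the Kenmochi $L^1$-bound plus the Debussche--Dettori level-set argument, and your proof goes through.
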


\begin{oss}
{\upshape Note that, thanks to \eqref{re4}, \eqref{prmu} and
\eqref{ei}, we have that
$$F'(\varphi)\in L^2(0,T;V),\quad
F(\varphi)\in L^{\infty}(0,T;L^1(\Omega)),\qquad\forall T>0.$$
}
\end{oss}

\begin{oss}
{\upshape The regularity property \eqref{phiLq} does not follow from \eqref{re7}.
Indeed, recall that $L^{\infty}(0,T;L^{\infty}(\Omega))\subset L^{\infty}(Q)$
with strict inclusion.
}
\end{oss}

\section{Proof of Theorem~\ref{existence}}
\setcounter{equation}{0} We consider the following approximate
problem $P_{\epsilon}$: find a weak solution $[\ue,\phie]$ to
\begin{align}
&\phie'+\ue\cdot\nabla\phie=\Delta\mue,\label{Pe1}\\
&\ue'-\mbox{div}(\nu(\phie)2D\ue)+(\ue\cdot\nabla)\ue
+\nabla\pi_{\epsilon}
=\mue\nabla\phie+h,\label{Pe2}\\
&\mue=a\phie-J\ast\phie+\Fe'(\phie),\label{Pe3}\\
&\mbox{div}(\ue)=0,\\
&\frac{\partial\mue}{\partial n}=0,\quad\ue=0,
\quad\mbox{on }\partial\Omega,\\
&\ue(0)=u_0,\quad\phie(0)=\varphi_0,\quad\mbox{in }\Omega.
\end{align}
Problem $P_{\epsilon}$ is obtained from \eqref{eq1}-\eqref{eq6}
by replacing the singular potential $F$ with the smooth potential
$$\Fe=\Fie+\overline{F}_2,$$
where $\Fie$ is defined by
\begin{equation}
\Fie^{(2+2q)}(s)=\left\{\begin{array}{lll}
F_1^{(2+2q)}(1-\e),\qquad s\geq 1-\e\\
F_1^{(2+2q)}(s),\qquad|s|\leq 1-\e\\
F_1^{(2+2q)}(-1+\e),\qquad s\leq -1+\e
\end{array}\right.
\label{approxpot}
\end{equation}
and $\Fie(0)=F_1(0)$,
$\Fie'(0)=F_1'(0)$,$\dots$$\Fie^{(1+2q)}(0)=F_1^{(1+2q)}(0)$,
while $\overline{F}_2$ is a $C^2(\mathbb{R})$-extension of $F_2$
on $\mathbb{R}$ with polynomial growth satisfying
\begin{align}
\overline{F}_2(s)\geq\min_{[-1,1]}F_2-1,\qquad\overline{F}_2''(s)\geq\min_{[-1,1]}F_2'',
\qquad\forall s\in\mathbb{R}.
\label{F_2}
\end{align}

The following elementary lemmas are basics to obtain uniform (w.r.t.
$\e$) estimates for a weak solution to the approximate problem.

\begin{lem}
\label{Fe}
Suppose that (A3) and (A4) hold. Then, there exist $c_q,d_q>0$, which depend on $q$
but are independent of $\e$, and $\e_0>0$
such that
\begin{equation}
\Fe(s)\geq c_q|s|^{2+2q}-d_q,\qquad\forall s\in\mathbb{R},\quad\forall\e\in(0,\e_0].
\label{coerapproxpot}
\end{equation}
\end{lem}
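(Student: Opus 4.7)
Since by \eqref{F_2} the function $\overline{F}_2$ is bounded below by $\min_{[-1,1]}F_2-1$, it suffices to prove the coercivity estimate for $\Fie$; the idea is to Taylor-expand $\Fie$ at the matching points $\pm(1-\e)$ and use the sign information in (A3)--(A4) to control the lower-order terms.

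First I would remark that by construction \eqref{approxpot} together with the matching conditions $\Fie^{(k)}(0)=F_1^{(k)}(0)$ for $k=0,\dots,1+2q$, one has $\Fie^{(k)}(s) = F_1^{(k)}(s)$ for every $k=0,\dots,2+2q$ and every $|s|\leq 1-\e$. In particular, for $\e\leq\e_0$, the boundary values $\Fie^{(k)}(\pm(1-\e))$ inherit the sign information from (A3)--(A4). Then, for $s\geq 1-\e$, noting that $\Fie^{(2+2q)}$ is constant and equal to $F_1^{(2+2q)}(1-\e)\geq c_1$ on $[1-\e,+\infty)$ by \eqref{approxpot} and (A3), a Taylor expansion at $s_0=1-\e$ gives
$$\Fie(s) = \sum_{k=0}^{1+2q}\frac{F_1^{(k)}(1-\e)}{k!}(s-(1-\e))^k+\frac{F_1^{(2+2q)}(1-\e)}{(2+2q)!}(s-(1-\e))^{2+2q}.$$
All lower-order terms are non-negative, since $F_1^{(k)}(1-\e)\geq 0$ by (A4) and $(s-(1-\e))^k\geq 0$. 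Thus
$$\Fie(s) \geq \frac{c_1}{(2+2q)!}(s-(1-\e))^{2+2q},\qquad s\geq 1-\e.$$

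A symmetric argument at $s_0=-1+\e$ handles $s\leq -1+\e$: here $(s-(-1+\e))^k$ is non-negative for even $k$ and non-positive for odd $k$, while by the second line of (A4) the derivatives $F_1^{(2j+2)}(-1+\e)$ and $F_1^{(2j+1)}(-1+\e)$ carry precisely the signs needed to make every term in the Taylor expansion non-negative, yielding $\Fie(s)\geq \frac{c_1}{(2+2q)!}|s-(-1+\e)|^{2+2q}$. To pass from these endpoint-centred bounds to \eqref{coerapproxpot}, I would split $\mathbb{R}$ into $|s|\geq 2$ and $|s|\leq 2$. On the former region, $|s\mp(1-\e)|\geq |s|/2$ uniformly in $\e\leq\e_0$, so the two Taylor bounds give a lower bound of the form $\tilde{c}_q|s|^{2+2q}$. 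On $|s|\leq 2$ it suffices to show that $\Fie$ is uniformly bounded below: for $|s|\geq 1-\e$ the bounds above already give $\Fie(s)\geq 0$, and on $|s|\leq 1-\e$ we have $\Fie(s)=F_1(s)\geq F_1(0)+F_1'(0)s+\frac{\alpha}{2}s^2$ by a Taylor expansion at $0$ using (A6), which is bounded below on $[-1,1]$ by a constant independent of $\e$. Absorbing the contribution on $|s|\leq 2$ into a sufficiently large additive constant, and finally adding the uniform lower bound for $\overline{F}_2$ from \eqref{F_2}, I would conclude \eqref{coerapproxpot}.

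The main obstacle is really the sign bookkeeping in the negative-side Taylor expansion; it is precisely to make that bookkeeping work that assumption (A4) prescribes the alternating signs of $F_1^{(2j+1)}$ and $F_1^{(2j+2)}$ near $-1$. Once this is checked, everything else is elementary.
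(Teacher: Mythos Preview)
Your argument is correct and follows essentially the same route as the paper: Taylor-expand $\Fie$ at the matching points $\pm(1-\e)$, use (A3)--(A4) to make all terms in the expansion non-negative, and then pass from the endpoint-centred bound $c\,|s\mp(1-\e)|^{2+2q}$ to the bound $c_q|s|^{2+2q}-d_q$ by an elementary splitting (the paper splits at $|s|=1$ rather than $|s|=2$, but this is cosmetic). One small point: in handling the interior region $|s|\le 1-\e$ you invoke (A6), which is not among the hypotheses of the lemma; you do not need it, since $F_1\in C^{2+2q}(-1,1)$ is automatically bounded below on the fixed compact interval $[-1+\e_0,1-\e_0]$, and that uniform lower bound can be absorbed into $d_q$.
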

\begin{proof}
By integrating \eqref{approxpot} we get
\begin{align}
&\Fie(s)=\left\{\begin{array}{lll}
\sum_{k=0}^{2+2q}\frac{1}{k!}F_1^{(k)}(1-\e)[s-(1-\e)]^k,\qquad s\geq 1-\e\\
F_1(s),\qquad|s|\leq 1-\e\\
\sum_{k=0}^{2+2q}\frac{1}{k!}F_1^{(k)}(-1+\e)[s-(-1+\e)]^k,\qquad s\leq -1+\e.
\end{array}\right.
\label{approxpot2}
\end{align}
Due to (A4) we have, for $\e$ small enough,
$$\Fie(s)\geq\frac{1}{(2+2q)!}F_1^{(2+2q)}(1-\e)
[s-(1-\e)]^{2+2q},\qquad\forall s\geq 1-\e,$$ so that, in particular,
$$\Fie(s)\geq\frac{1}{(2+2q)!}F_1^{(2+2q)}(1-\e)(s-1)^{2+2q},\qquad\forall s\geq 1,$$
and (A3) implies that (for $\e$ small enough)
$$\Fie(s)\geq 2c_q (s-1)^{2+2q}\geq c_q s^{2+2q}-d_q,\qquad\forall s\geq 1,$$
where $c_q=c_1/2(2+2q)!$ and $d_q$ is another constant depending
only on $q$. Furthermore, we have $\Fie(s)=F_1(s)\geq 0\geq c_q
s^{2+2q}-d_q$ for $0\leq s\leq 1-\e$, provided we choose $d_q\geq
c_q$, while for $1-\e\leq s\leq 1$ we have $\Fie\geq
2c_q[s-(1-\e)]^{2+2q}\geq 0\geq c_q s^{2+2q}-d_q$. Summing
up, we deduce that there exists $\e_0>0$ such that $\Fie(s)\geq c_q
s^{2+2q}-d_q$, for all $s\geq 0$ and for all $\e\in(0,\e_0]$. By
using \eqref{F_2} we also get \eqref{coerapproxpot} for $s\geq 0$.
Similarly we obtain \eqref{coerapproxpot} for $s\leq 0$.
\end{proof}

\begin{lem}
\label{Fe''} Suppose (A4) and (A6) hold. Then, setting
$c_0:=\alpha+\beta+\min_{[-1,1]}F_2''>0$, there exists $\e_1>0$
such that
\begin{equation}
\Fe''(s)+a(x)\geq c_0,\qquad\forall s\in\mathbb{R},\quad\mbox{a.e. }x\in\Omega,\quad\forall\e\in(0,\e_1].
\label{approxconvpert}
\end{equation}
\end{lem}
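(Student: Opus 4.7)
The plan is to reduce the claim to a uniform lower bound on $F_{1\epsilon}''$ and then exploit the Taylor-polynomial form of $F_{1\epsilon}$ outside the core interval. Since $F_\epsilon=F_{1\epsilon}+\overline F_2$, we have $F_\epsilon''(s)+a(x)=F_{1\epsilon}''(s)+\overline F_2''(s)+a(x)$, and \eqref{F_2} together with (A6) already give $\overline F_2''(s)+a(x)\ge \min_{[-1,1]}F_2''+\beta$ a.e. Hence it suffices to show that there exists $\e_1>0$ such that $F_{1\epsilon}''(s)\ge\alpha$ for every $s\in\mathbb R$ and every $\e\in(0,\e_1]$; summing yields \eqref{approxconvpert} and the positivity $c_0>0$ is encoded in (A6).

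To obtain this bound, I would invoke the explicit representation \eqref{approxpot2} obtained by integrating \eqref{approxpot}: on $|s|\le 1-\e$ one has $F_{1\epsilon}(s)=F_1(s)$, while on each outer region $F_{1\epsilon}$ is a Taylor polynomial of order $2+2q$ at $1-\e$ or $-1+\e$. Differentiating twice the right-hand expansion, for $s\ge 1-\e$,
\begin{equation*}
F_{1\epsilon}''(s)=\sum_{j=0}^{2q}\frac{[s-(1-\e)]^j}{j!}\,F_1^{(j+2)}(1-\e),
\end{equation*}
and analogously for $s\le -1+\e$ with base point $-1+\e$. The central region is immediate: $F_{1\epsilon}''(s)=F_1''(s)\ge\alpha$ by (A6).

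For $s\ge 1-\e$ with $\e\le\e_0$, the point $1-\e$ lies in $[1-\e_0,1)$, so (A4) gives $F_1^{(j+2)}(1-\e)\ge 0$ for every $j\ge 0$; all powers $[s-(1-\e)]^j$ are nonnegative, so every term with $j\ge 1$ is $\ge 0$, and the $j=0$ term is $F_1''(1-\e)\ge\alpha$ by (A6). The delicate case, and the one I expect to be the main bookkeeping obstacle, is $s\le -1+\e$: here $s-(-1+\e)\le 0$, so I must track signs carefully. By (A4), $F_1^{(2i+2)}(-1+\e)\ge 0$ and $F_1^{(2i+1)}(-1+\e)\le 0$ for $i=0,\dots,q$. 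Splitting the sum into even and odd $j$: if $j=2i\ge 2$, both factors are nonnegative; if $j=2i+1$, the factor $F_1^{(2i+3)}(-1+\e)$ is $\le 0$ and the odd power $[s-(-1+\e)]^{2i+1}$ is also $\le 0$, so the product is $\ge 0$. Again the $j=0$ term contributes $F_1''(-1+\e)\ge\alpha$ by (A6). Choosing $\e_1:=\e_0$ and combining the three regions then gives $F_{1\epsilon}''(s)\ge\alpha$ uniformly in $s$ and $\e$, which completes the proof.
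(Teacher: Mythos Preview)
The proposal is correct and follows essentially the same route as the paper: reduce to the uniform bound $F_{1\epsilon}''\ge\alpha$ via the Taylor representation of $F_{1\epsilon}''$ outside $[-1+\e,1-\e]$, use (A4) to show each term with $j\ge 1$ in the expansion is nonnegative (your parity bookkeeping for the left endpoint is precisely what the paper compresses into the sentence ``(A4) implies $F_{1\epsilon}''(s)\ge F_1''(\pm(1-\e))$''), and then conclude with (A6) and \eqref{F_2}. Your choice $\e_1=\e_0$ is exactly the threshold implicit in the paper's ``for $\e$ small enough''.
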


\begin{proof}
From \eqref{approxpot2} we have
\begin{align}
&\Fie''(s)=\left\{\begin{array}{lll}
\sum_{k=0}^{2q}\frac{1}{k!}F_1^{(k+2)}(1-\e)[s-(1-\e)]^k,\qquad s\geq 1-\e\\
F_1''(s),\qquad|s|\leq 1-\e\\
\sum_{k=0}^{2q}\frac{1}{k!}F_1^{(k)}(-1+\e)[s-(-1+\e)]^k,\qquad s\leq -1+\e.
\end{array}\right.
\end{align}
Assumption (A4) implies that for $\e$ small enough $\Fie''(s)\geq F_1''(1-\e)$ for $s\geq 1-\e$
and $\Fie''(s)\geq F_1''(-1+\e)$ for $s\leq -1+\e$.
Since $\Fie''(s)=F_1''(s)$ for
$|s|\leq 1-\e$, (A6) implies that there exists $\e_1>0$ such that
\begin{align}
&\Fie''(s)\geq\alpha,\qquad\forall s\in\mathbb{R},\quad\forall\e\in(0,\e_1].
\label{Fie''bdb}
\end{align}
This estimate together with \eqref{F_2} and (A6) imply \eqref{approxconvpert}.
\end{proof}

Due to the existence result proved in \cite{CFG}, for every $T>0$,
Problem $P_{\e}$ admits a weak solution $z_{\e}:=[\ue,\phie]$
such that
\begin{eqnarray}
& &\ue\in L^{\infty}(0,T;G_{div})\cap L^2(0,T;V_{div}),\label{df1}\\
& &\ue'\in L^{4/3}(0,T;V_{div}'),\qquad\mbox{if}\quad d=3,\\
& &\ue'\in L^2(0,T;V_{div}'),\qquad\mbox{if}\quad d=2,\\
& &\phie\in L^{\infty}(0,T;L^{2+2q}(\Omega))\cap L^2(0,T;V),\\
& &\phie'\in L^2(0,T;V'),\\
& &\mue\in L^2(0,T;V).
\end{eqnarray}

Indeed, it is immediate to check that all the assumptions of
\cite[Theorem 1]{CFG} and of \cite[Corollary 1]{CFG} are satisfied
for Problem $P_{\e}$. In particular, we use Lemma \ref{Fe}, Lemma
\ref{Fe''} and the fact that, due to the definition of $\Fie$ and to the
polynomial growth assumption on $\overline{F}_2$, assumption (H5)
of \cite[Theorem 1]{CFG} is trivially satisfied for each $\e>0$ (with
some constants depending on $\e$).

Furthermore, according to \cite[Theorem 1]{CFG} and using (A2), the approximate solution $z_{\e}:=[\ue,\phie]$
satisfies the following energy inequality
\begin{align}
&\frac{1}{2}\|\ue(t)\|^2+\frac{1}{4}\int_{\Omega}\int_{\Omega}J(x-y)(\phie(x,t)-\phie(y,t))^2dxdy+
\int_{\Omega}\Fe(\phie(t))\nonumber\\
&+\int_0^t(\nu_1\|\nabla\ue\|^2+\|\nabla\mue\|^2)d\tau\leq
\frac{1}{2}\|u_0\|^2+\frac{1}{4}\int_{\Omega}\int_{\Omega}J(x-y)(\varphi_0(x)-\varphi_0(y))^2dxdy\nonumber\\
&+\int_{\Omega}\Fe(\varphi_0)+\int_0^t\langle h,\ue\rangle d\tau,\qquad\forall t\in[0,T].
\label{eie}
\end{align}
From (A5) it is easy to see (cf. \eqref{dercont} and \eqref{dercont2}
below) that there exists $\e_1>0$ such that
\begin{align}
&\Fie(s)\leq F_1(s),\qquad\forall s\in(-1,1),\quad\forall\e\in(0,\e_1].
\label{Fecont}
\end{align}
 Therefore, using the assumptions on $\varphi_0$,
$u_0$ and Lemma 1, from \eqref{eie} we get the following estimates
\begin{align}
&\|\ue\|_{L^{\infty}(0,T;G_{div})\cap L^2(0,T;V_{div})}\leq c,\label{best1}\\
&\|\phie\|_{L^{\infty}(0,T;L^{2+2q}(\Omega))}\leq c,\label{best2}\\
&\|\nabla\mue\|_{L^2(0,T;H)}\leq c.\label{best3}
\end{align}
Henceforth $c$ will denote a positive constant which depends on the initial data, but is independent
of $\e$.

We then take the gradient of \eqref{Pe3} and multiply the resulting
identity by $\nabla\phie$ in $L^2(\Omega)$. Arguing as in
\cite{CFG}, we get
$$\|\nabla\mue\|^2\geq\frac{c_0^2}{4}\|\nabla\phie\|^2-k\Vert\phie\Vert^2,$$
with $k=2\Vert\nabla J\Vert_{L^1}^2$.  This last estimate together
with \eqref{best2} and \eqref{best3} yield
\begin{align}
&\|\phie\|_{L^2(0,T;V)}\leq c.
\label{best4}
\end{align}
As far as the bounds on the time derivatives $\{\ue'\}$ and
$\{\phie'\}$ are concerned, on account of \eqref{Pe1} and
\eqref{Pe2}, arguing by comparison as in \cite{CFG} one gets
\begin{align}
&\Vert\phie'\Vert_{L^2(0,T;V')}\leq c,
\label{dest1}\\
&\Vert\ue'\Vert_{L^2(0,T;V_{div}')}\leq c,\qquad d=2\label{dest2}\\
&\Vert\ue'\Vert_{L^{4/3}(0,T;V_{div}')}\leq c,\qquad d=3.\label{dest3}
\end{align}
In order to obtain an estimate for $\{\mue\}$ we need to control the
sequence of averages $\{\overline{\mue}\}$. To this aim observe
that equation \eqref{Pe1} can be written in abstract form as follows
\begin{align}
&\phie'+\ue\cdot\nabla\phie=-A\mue\qquad\mbox{in }V'.\label{Pe1abst}
\end{align}
Let us test \eqref{Pe1abst} by $\mathcal{N}(\Fe'(\phie)-\overline{\Fe'(\phie)})$ to get
\begin{align}
&\langle\Fe'(\phie)-\overline{\Fe'(\phie)},\mathcal{N}\phie'\rangle+\langle\mathcal{N}(\ue\cdot\nabla\phie),
\Fe'(\phie)-\overline{\Fe'(\phie)}\rangle\nonumber\\
&=-\langle\mue,\Fe'(\phie)-\overline{\Fe'(\phie)}\rangle \UU.\EE
\label{Pe1tested}
\end{align}
Recall that $\overline{\ue\cdot\nabla\phie}=0$. On the other hand,
we have
\begin{align}
&\langle\mue,\Fe'(\phie)-\overline{\Fe'(\phie)}\rangle=
\langle a\phie-J\ast\phie+\Fe'(\phie)-\overline{\Fe'(\phie)},\Fe'(\phie)-\overline{\Fe'(\phie)}\rangle
\nonumber\\
&\geq\frac{1}{2}\Vert\Fe'(\phie)-\overline{\Fe'(\phie)}\Vert^2-\frac{1}{2}\Vert a\phie-J\ast\phie\Vert^2
\geq\frac{1}{2}\Vert\Fe'(\phie)-\overline{\Fe'(\phie)}\Vert^2-C_J\Vert\phie\Vert^2.
\label{FeFe'}
\end{align}

Therefore, by means of \eqref{FeFe'} and \eqref{best2}, from
\eqref{Pe1tested} we deduce
\begin{align}
&\Vert\Fe'(\phie)-\overline{\Fe'(\phie)}\Vert\leq c(\Vert\mathcal{N}\phie'\Vert+\Vert\mathcal{N}(\ue\cdot\nabla\phie)\Vert+1)\nonumber\\
&\leq c(\Vert\phie'\Vert_{V_0'}+\Vert\ue\cdot\nabla\phie\Vert_{V_0'}+1).
\label{FeFe'2}
\end{align}

Observe now that, due to (A4) and (A5), there holds
\begin{align}
&|\Fie'(s)|\leq|F'_1(s)|,\qquad\forall s\in(-1,1),\quad
\forall\e\in(0,\e_1],
\label{dercont}
\end{align}
for some $\e_1>0$. Indeed, for $s\in[1-\e,1)$ we have
\begin{align}
&F_1'(s)=\sum_{k=0}^{2q}\frac{1}{k!}F_1^{(k+1)}(1-\e)[s-(1-\e)]^k
+\frac{1}{(2q+1)!}F_1^{(2q+2)}(\xi)[s-(1-\e)]^{1+2q}\nonumber\\
&\geq \sum_{k=0}^{1+2q}\frac{1}{k!}F_1^{(k+1)}(1-\e)[s-(1-\e)]^k=\Fie'(s),
\label{dercont2}
\end{align}
for $\e$ small enough, where $\xi\in(1-\e,s)$ and where we have
used the fact that, due to (A5), $F_1^{(2+2q)}(\xi)\geq
F_1^{(2+2q)}(1-\e)$. Arguing similarly, we get $\Fie'(s)\geq
F_1'(s)$ for $s\in(-1,-1+\e]$ and for $\e$ small enough. However,
due to (A4) and (A7), for $\e$ small enough we have that
$\Fie'(s)\geq F_1'(1-\e)\geq 0$ for $s\geq 1-\e$ and $\Fie'(s)\leq
F_1'(-1+\e)\leq 0$ for $s\leq -1+\e$. Recalling also that
$\Fie'(s)=F_1'(s)$ for $|s|\leq 1-\e$, we obtain \eqref{dercont}.

Let $s_0\in(-1,1)$ be such that $F'(s_0)=0$ (cf. (A7)) and introduce
\begin{align}
&H(s):=F(s)+\frac{a_{\infty}}{2}(s-s_0)^2,\qquad H_{\epsilon}(s):=\Fe(s)+\frac{a_{\infty}}{2}(s-s_0)^2,
\label{H}
\end{align}
for every $s\in(-1,1)$ and every $s\in\mathbb{R}$, respectively.
Observe that, owing to \eqref{approxconvpert}, $H'_{\epsilon}$ is
monotone and (for $\e$ small enough)
$H'_{\epsilon}(s_0)=F'(s_0)=0$.
%Now, observe that, due to (A4), the regular part $F_2$ of the singular potential $F$
%can be written as a quadratic perturbation of a convex function, i.e. $F_2$ can be
%represented as
%$$F_2(s)=G_2(s)-\frac{a_{\infty}}{2}s^2,$$
%with $G_2$ strictly convex and $a_{\infty}:=\|a\|_{\infty}$.
%Setting now $\widetilde{\Fie}:=\Fie+G_2$, observe that, due to \eqref{Fie''bdb}
%$$\widetilde{\Fie}''(s)=\Fie''(s)+G_2''(s)\geq m+c_2>0,\qquad\forall s\in\mathbb{R},\quad\forall\e\in(0,1).$$
%Hence, $\widetilde{\Fie}'$ is monotone
%and, due to the second assumption in (A3), $F_1'(0)+G_2'(0)=0$. Furthermore by (A3) we have $$|\Fie'(s)|\leq|F'_1(s)|,\qquad\forall s\in(-1,1),\quad\forall\e\in(0,\e_1],$$
Since $\overline{\varphi_0}\in(-1,1)$, we can apply an argument
devised by Kenmochi et al. \cite{KNP} (see also \cite{CGGS}) and
deduce the following estimate
\begin{align}
\delta\Vert H'_{\epsilon}(\phie)\Vert_{L^1(\Omega)}\leq
\int_{\Omega}(\phie-\overline{\varphi_0})(H'_{\epsilon}(\phie)-\overline{H'_{\epsilon}(\phie)})
+K(\overline{\varphi_0})
\label{Ken}
\end{align}
where $\delta$ depends on $\overline{\varphi_0}$ and
$K(\overline{\varphi_0})$ depends on $\overline{\varphi_0}$, $F$,
$|\Omega|$ and $a$. For the reader's convenience let us recall briefly
how \eqref{Ken} can be deduced. Fix $m_1,m_2\in(-1,1)$ such that
$m_1\leq s_0\leq m_2$ and $m_1<\overline{\varphi_0}<m_2$.
Introduce, for a.a. fixed $t\in(0,T)$, the sets
$$\Omega_0:=\{m_1\leq\phie(x,t)\leq m_2\},\quad
\Omega_1:=\{\phie(x,t)<m_1\},\quad\Omega_2:=\{\phie(x,t)>m_2\}.$$
Setting
$\delta:=\min\{\overline{\varphi_0}-m_1,m_2-\overline{\varphi_0}\}$
and
$\delta_1:=\max\{\overline{\varphi_0}-m_1,m_2-\overline{\varphi_0}\}$,
then for $\e$ small enough we have
\begin{align}
&\delta\Vert H'_{\epsilon}(\phie)\Vert_{L^1(\Omega)}=\delta\int_{\Omega_1}|H'_{\epsilon}(\phie)|
+\delta\int_{\Omega_2}|H'_{\epsilon}(\phie)|+\delta\int_{\Omega_0}|H'_{\epsilon}(\phie)|\nonumber\\
&\leq\int_{\Omega_1}(\phie(t)-\overline{\varphi_0})H'_{\epsilon}(\phie)+
\int_{\Omega_2}(\phie(t)-\overline{\varphi_0})H'_{\epsilon}(\phie)
+\delta\int_{\Omega_0}|H'_{\epsilon}(\phie)|\nonumber\\
&\leq\int_{\Omega}(\phie(t)-\overline{\varphi_0})H'_{\epsilon}(\phie)
+(\delta_1+\delta)\int_{\Omega_0}|H'_{\epsilon}(\phie)|\nonumber\\
&\leq\int_{\Omega}(\phie(t)-\overline{\varphi_0})H'_{\epsilon}(\phie)
+(\delta_1+\delta)\int_{\Omega_0}\Big\{|F_1'(\phie)|+|F_2'(\phie)|+a_{\infty}|\phie-s_0|\Big\},
\nonumber
\end{align}
where we have used \eqref{dercont}. We therefore get
\eqref{Ken} with $K(\overline{\varphi_0})$ given by
$$K(\overline{\varphi_0})=(\delta_1+\delta)|\Omega|\Big(\max_{[m_1,m_2]}(|F_1'|+|F_2'|)
+a_{\infty}\delta_2\Big),$$
with $\delta_2:=\max\{s_0-m_1,m_2-s_0\}$.
%\sup_{m_1\leq s\leq m_2}
On account of the definition of $H_{\epsilon}$ and recalling
\eqref{FeFe'2} we obtain
\begin{align}
&\Vert H'_{\epsilon}(\phie)-\overline{H'_{\epsilon}(\phie)}\Vert
\leq c(\Vert\phie'\Vert_{V_0'}+\Vert\ue\cdot\nabla\phie\Vert_{V_0'}+1)+a_{\infty}\Vert\phie-\overline{\varphi_0}
\Vert.
\label{tildeFe}
\end{align}

Therefore, by means of \eqref{Ken}-\eqref{tildeFe} and using the
following bound (cf. \eqref{best1} and \eqref{best2}, see
\cite{CFG} for details)
\begin{align*}
&\|\ue\cdot\nabla\phie\|_{L^2(0,T;V_0')}\leq c,
\end{align*}
we infer that there exists a function $L_{\overline{\varphi_0}}\in L^2(0,T)$ depending on $\overline{\varphi_0}$ such
that
\begin{align}
\Vert\Fe'(\phie)\Vert_{L^1(\Omega)}\leq L_{\overline{\varphi_0}}.
\label{Ken2}
\end{align}

Since $\int_{\Omega}\mue=\int_{\Omega}\Fe'(\phie)$, then
$\Vert\overline{\mue}\Vert_{L^2(0,T)}\leq c$. Hence by
Poincar\'{e}-Wirtinger inequality and \eqref{best3} we get
\begin{align}
\Vert\mue\Vert_{L^2(0,T;V)}\leq c.
\label{best5}
\end{align}

Estimates \eqref{best1}, \eqref{best2},
\eqref{best4}-\eqref{dest3}, \eqref{best5} and well-known
compactness results allow us to deduce that there exist functions
$u\in L^{\infty}(0,T;G_{div})\cap L^2(0,T;V_{div})$, $\varphi\in
L^{\infty}(0,T;L^{2+2q}(\Omega))\cap L^2(0,T;V)\cap
H^1(0,T;V')$, and $\mu\in L^2(0,T;V)$ such that, up to a
subsequence, we have
\begin{align}
& \ue\rightharpoonup u\qquad\mbox{weakly}^{\ast}\mbox{ in } L^{\infty}(0,T;G_{div}),
\quad\mbox{weakly in }L^2(0,T;V_{div}),\label{c1}\\
& \ue\to u\qquad\mbox{strongly in }L^2(0,T;G_{div}),\quad\mbox{a.e. in }\Omega\times(0,T),\label{c2}\\
& \ue'\rightharpoonup u_t\qquad\mbox{weakly in }L^{4/3}(0,T;V_{div}'),\quad d=3,\label{c3}\\
& \ue'\rightharpoonup u_t
\qquad\mbox{weakly in }L^2(0,T;V_{div}'),\quad d=2,\label{c4}\\
& \phie\rightharpoonup\varphi\qquad\mbox{weakly}^{\ast}\mbox{ in }L^{\infty}(0,T;L^{2+2q}(\Omega)),
\quad\mbox{weakly in }L^2(0,T;V),\label{c5}\\
& \phie\to\varphi\qquad\mbox{strongly in }L^2(0,T;H),\quad\mbox{a.e. in }\Omega\times(0,T),\label{c6}\\
& \phie'\rightharpoonup\varphi_t\qquad\mbox{weakly in }L^2(0,T;V'),\label{c7}\\
& \mue\rightharpoonup\mu\qquad\mbox{weakly in }L^2(0,T;V).\label{c8}
\end{align}

In order to pass to the limit in the variational  formulation for Problem
$P_{\e}$ and hence prove that $z=[u,\varphi]$ is a weak solution to
the original problem, we need to show that $|\varphi|<1$ a.e. in
$Q=\Omega\times(0,T)$. To this aim we adapt an argument devised
in \cite{DD}. Thus, for a.a. fixed $t\in(0,T)$, we introduce the sets
$$E_{1,\eta}^{\e}:=\{\phie(x,t)>1-\eta\},\quad E_{2,\eta}^{\e}
:=\{\phie(x,t)<-1+\eta\},$$
where $\eta\in(0,1)$ is chosen so that $s_0\in(-1+\eta,1-\eta)$
with $s_0$ such that $F'(s_0)=0$. For $\e$ small enough, recalling
that $H_{\e}'(s)\geq 0$ for $s\in[s_0,1)$ and $H_{\e}'(s)\leq 0$
for $s\in(-1,s_0]$, we can write
\begin{align}
&H_{\e}'(1-\eta)|E_{1,\eta}^{\e}|\leq\Vert H_{\e}'(\phie)\Vert_{L^1(\Omega)},
\qquad|H_{\e}'(-1+\eta)||E_{2,\eta}^{\e}|\leq\Vert H_{\e}'(\phie)\Vert_{L^1(\Omega)},
\label{DebDet1}
\end{align}
and observe that $\Vert H_{\e}'(\phie)\Vert_{L^1(\Omega)}\leq
L_{\overline{\varphi_0}}$ (cf. \eqref{Ken2}). Furthermore, as a
consequence of the pointwise convergence \eqref{c6} and by using
Fatou's lemma, it is easy to see that we have
\begin{align}
&|E_{1,\eta}|\leq\liminf_{\e\to 0}|E_{1,\eta}^{\e}|,\quad |E_{2,\eta}|\leq\liminf_{\e\to 0}|E_{2,\eta}^{\e}|,
\label{DebDet2}
\end{align}
where
$$E_{1,\eta}:=\{\varphi(x,t)>1-\eta\},\quad
E_{2,\eta}:=\{\varphi(x,t)<-1+\eta\}.$$
Hence, due to the pointwise convergence $H_{\e}'(s)\to H'(s)$,
for every $s\in(-1,1)$,
%where $\widetilde{F_1}:=F_1+G_2$,
we get from \eqref{DebDet1} and \eqref{DebDet2}
\begin{align}
&|E_{1,\eta}|\leq\frac{L_{\overline{\varphi_0}}}{H'(1-\eta)},
\qquad|E_{2,\eta}|\leq\frac{L_{\overline{\varphi_0}}}{|H'(-1+\eta)|}.
\end{align}
Letting $\eta\to 0$ and using (A7) we obtain
$|\{x\in\Omega:|\varphi(x,t)|\geq 1\}|=0$ for a.e. $t\in(0,T)$ and
therefore $|\varphi(x,t)|<1$ for a.e. $(x,t)\in Q$. This bound, the
pointwise convergence \eqref{c6} in $Q$ and the fact that $\Fe'\to
F'$ uniformly on every compact interval included in $(-1,1)$, entail
that
\begin{align}
&\Fe'(\phie)\to F'(\varphi)\quad\mbox{a.e. in }Q.
\label{pointFe'}
\end{align}

Convergences \eqref{c1}-\eqref{c8} and \eqref{pointFe'} allow us,
by a standard argument, to pass to the limit in the variational
formulation of Problem $P_{\e}$ and hence to prove that
$z=[u,\varphi]$ is a weak solution to \eqref{eq1}-\eqref{eq6}.

%In order to prove the energy inequality
%\eqref{ei},
Let us now establish the energy inequality \eqref{ei}. Let us first
show that \eqref{ei} holds for $s=0$ and $t>0$.
% let us observe that
Indeed, the energy inequality
%\eqref{eie}
satisfied by the approximate solution $z_{\epsilon}=[\ue,\phie]$ can be written in the form
\begin{align}
&\frac{1}{2}\Vert\ue(t)\Vert^2+
\frac{1}{2}\|\sqrt{a}\phie(t)\|^2-\frac{1}{2}(\phie(t),J\ast\phie(t))+\int_{\Omega}\Fe(\phie(t))\nonumber\\
&+\int_0^t\Big(2\Vert\sqrt{\nu(\phie)}D\ue\Vert^2+\|\nabla\mue\|^2\Big)d\tau\leq
\frac{1}{2}\|u_0\|^2+\frac{1}{2}\|\sqrt{a}\varphi_0\|^2-\frac{1}{2}(\varphi_0,J\ast\varphi_0)\nonumber\\
&+\int_{\Omega}\Fe(\varphi_0)+\int_0^t\langle h,\ue\rangle d\tau,\qquad\forall t>0.
\label{apprei}
\end{align}
We now use the strong convergences \eqref{c2} and \eqref{c6}, the
weak convergences \eqref{c1} and \eqref{c8}, the bound
\eqref{Fecont} for the approximate potential $\Fie$, the fact that
$\Fe(\phie(t))\to F(\varphi(t))$ a.e. in $\Omega$ and for a.e.
$t\in(0,T)$ (see \eqref{pointFe'}) and Fatou's lemma. Observe that,
as a consequence of the uniform bound
$\Vert\sqrt{\nu(\phie)}\Vert_{\infty}\leq\sqrt{\nu_2}$, of the
strong convergence $\sqrt{\nu(\phie)}\to\sqrt{\nu(\varphi)}$ in
$L^2(0,T;H)$ and of the weak convergence \eqref{c1}, we have
\begin{align}
&\sqrt{\nu(\phie)}D\ue\rightharpoonup\sqrt{\nu(\varphi)}Du,\qquad\mbox{weakly in }L^2(0,T;H).
\label{ddhh}
\end{align}
By letting $\e\to 0$, from \eqref{apprei} we infer that \eqref{ei}
holds for almost every $t>0$. Furthermore, due to the regularity
properties of the solution, there exists a representative
$z=[u,\varphi]$ such that $u\in C_w([0,\infty);G_{div})$ and
$\varphi\in C([0,\infty);H)$ (henceforth we shall always choose this
representative). Therefore, \eqref{ei} holds {\it for all} $t\geq 0$
since
%Then, it is easy to prove that
the function $\mathcal{E}(z(\cdot)):[0,\infty)\to\mathbb{R}$ is
lower semicontinuous. The lower semicontinuity of $\mathcal{E}$ is a
consequence of the fact that $F$ is a quadratic perturbation of a
(strictly) convex function in $(-1,1)$. Indeeed,  by (A6) we have that
$F''(s)\geq\alpha_{\ast}$, for all $s\in(-1,1)$, with
$\alpha_{\ast}=\alpha+\min_{[-1,1]}F_2''$. Then $F$ can be
written in the form
\begin{align}
&
F(s)=G(s)+\frac{\alpha_{\ast}}{2}s^2,
\label{Fqconvex}
\end{align}
with $G$ convex on $(-1,1)$ (see \cite[Lemma 2]{FG}).

Let us now prove that the energy inequality \eqref{ei} also holds
between two arbitrary times $s$ and $t$.
% Precisely we have
%\begin{align}
%&\mathcal{E}(z(t))+\int_s^t(\nu\|\nabla u\|^2+\|\nabla\mu\|^2)d\tau
%\leq\mathcal{E}(z(s))+\int_s^t\langle h, u\rangle d\tau,
%\label{eist}
%\end{align}
%for all $t\geq s$, for a.e. $s\in(0,\infty)$, including $s=0$.
Indeed, setting
\begin{align}
&\mathcal{E}_{\e}(z_{\e}(t))=\frac{1}{2}\Vert\ue(t)\Vert^2+
\frac{1}{2}\|\sqrt{a}\phie(t)\|^2-\frac{1}{2}(\phie(t),J\ast\phie(t))+\int_{\Omega}\Fe(\phie(t)),
\end{align}
and applying \cite[Lemma 3]{FG}, we deduce (see Remark
\ref{lem3FG}) that the approximate solution
$z_{\epsilon}=[\ue,\phie]$ satisfies
\begin{align}
&\mathcal{E}_{\e}(z_{\e}(t))+\int_s^t\Big(2\Vert\sqrt{\nu(\phie)}D\ue\Vert^2+\|\nabla\mue\|^2\Big)d\tau
\leq\mathcal{E}_{\e}(z_{\e}(s))+\int_s^t\langle h,\ue\rangle d\tau,
\label{eiepsilon}
\end{align}
for every $t\geq s$ and for a.e. $s\in(0,\infty)$, including $s=0$.

Define $G_{\e}$ in such a way that
\begin{align}
&
\Fe(s)=G_{\e}(s)+\frac{\alpha_{\ast}}{2}s^2,
\label{Fqconvexe}
\end{align}
with $\alpha_{\ast}$ as in \eqref{Fqconvex}. Since, due to
\eqref{Fie''bdb}, $G_{\e}$ is convex on $(-1,1)$,
%Since $\Fe$ is convex (actually $\Fe$ is a quadratic perturbation of a convex function
%but this is not restrictive),
then we can write
$$G_{\e}(\phie)\leq G_{\e}(\varphi)+G_{\e}'(\phie)(\phie-\varphi).$$
Hence, for every non-negative $\psi\in\mathcal{D}(0,t)$, we have
$$\int_{Q_t}G_{\e}(\phie)\psi\leq\int_{Q_t}G_{\e}(\varphi)\psi+\int_{Q_t}G_{\e}'(\phie)(\phie-\varphi)\psi,$$
where $Q_t:=\Omega\times(0,t)$. Thus, thanks to \eqref{best5} and
\eqref{c2}, we get
$$\Big|\int_{Q_t}G_{\e}'(\phie)(\phie-\varphi)\psi\Big|\leq c\Vert G_{\e}'(\phie)\Vert_{L^2(0,T;H)}
\Vert\phie-\varphi\Vert_{L^2(0,T;H)}\leq
c\Vert\phie-\varphi\Vert_{L^2(0,T;H)}\to 0,$$ as $\e\to 0$. Here
we have used the fact that, since
$\Vert\Fe'(\phie)\Vert_{L^2(0,T;H)}\leq c$ and
$G_{\e}'(\phie)=\Fe'(\phie)-\alpha_{\ast}\phie$, then $\Vert
G_{\e}'(\phie)\Vert_{L^2(0,T;H)}\leq c$. Therefore, by using
Lebesgue's theorem (recall \eqref{Fecont} and the fact that
$|\varphi|<1$ a.e. in $Q$) we find
$$\limsup_{\e\to 0}\int_{Q_t} G_{\e}(\phie)\psi
\leq\lim_{\e\to 0}\int_{Q_t} G_{\e}(\varphi)\psi
=\int_{Q_t}G(\varphi)\psi.$$ On the other hand, thanks to Fatou's
lemma and to the pointwise convergence $\Fe(\phie)\to F(\varphi)$,
we also have the liminf inequality. Then, on account of
\eqref{Fqconvex} and \eqref{Fqconvexe}, we deduce that
\begin{align}
&\int_{Q_t}\Fe(\phie)\psi\to\int_{Q_t}
F(\varphi)\psi,\qquad\forall\psi\in\mathcal{D}(0,t),\;\psi\geq 0.
\label{hx}
\end{align}
Let us multiply \eqref{eiepsilon} by a non-negative
$\psi\in\mathcal{D}(0,t)$ and integrate the resulting inequality
w.r.t. $s$ from $0$ and $t$, where $t>0$ is fixed. We obtain
\begin{align*}
&\mathcal{E}_{\e}(z_{\e}(t))\int_0^t\psi(s)ds+\int_0^t\psi(s)ds
\int_s^t\Big(2\Vert\sqrt{\nu(\phie)}D\ue\Vert^2
+\|\nabla\mue\|^2\Big)d\tau\nonumber\\
&\leq\int_0^t\mathcal{E}_{\e}(z_{\e}(s))\psi(s)ds
+\int_0^t\psi(s)ds\int_s^t\langle h,\ue\rangle d\tau.
\end{align*}
By using strong and weak convergences for the sequence
$\{z_{\e}\}$ and \eqref{hx}, passing to the limit as $\e\to 0$ in
the above inequality, we infer
\begin{align*}
&\mathcal{E}(z(t))\int_0^t\psi(s)ds+\int_0^t\psi(s)ds
\int_s^t\Big(2\Vert\sqrt{\nu(\varphi)}Du\Vert^2
+\|\nabla\mu\|^2\Big)d\tau\nonumber\\
&\leq\int_0^t\mathcal{E}(z(s))\psi(s)ds
+\int_0^t\psi(s)ds\int_s^t\langle h,u\rangle d\tau,
\end{align*}
which can be rewritten as follows
\begin{align*}
V_z(t)\int_0^t\psi(s)ds\leq \int_0^t V_z(s)\psi(s)ds,
\end{align*}
where
\begin{align*}
V_z(t):=\mathcal{E}(z(t))+\int_0^t\Big(2\Vert\sqrt{\nu(\varphi)}Du\Vert^2
+\|\nabla\mu\|^2\Big)d\tau-\int_0^t\langle h,u\rangle d\tau.
\end{align*}
Thus we have
\begin{align*}
\int_0^t (V_z(s)-V_z(t))\psi(s)ds\geq 0,\qquad
\forall\psi\in\mathcal{D}(0,t),\;\psi\geq 0,
\end{align*}
which implies that $V_z(t)\leq V_z(s)$ for a.e. $s\in (0,t)$.
Therefore, \eqref{ei} is proven.

Finally, for $d=2$, we can choose $u_t$ and $\varphi_t$ as test
functions in \eqref{weakfor1}-\eqref{weakfor2}, due to their
regularity properties, then use \eqref{Fqconvex} and
\cite[Proposition 4.2]{CKRS} and deduce \eqref{idendiff} (see
\cite{CFG} for details).

\begin{oss}
\label{lem3FG} {\upshape In \cite[Lemma 3]{FG} a growth
assumption is made on the regular potential (polynomial growth less
then 6 when $d=3$). Therefore, the application of \cite[Lemma
3]{FG} to obtain the approximate energy inequality \eqref{eiepsilon}
would require the condition $q=1$ (recall that the approximate
potential $\Fe$ has polynomial growth of order $2+2q$).
Nevertheless, by exploiting an argument of the same kind as above
and by suitably approximating regular potentials of arbitrary
polynomial growth by a sequence of potentials of polynomial growth
of order less then 6, it is not difficult to improve \cite[Lemma 3]{FG}
and remove such growth assumption. Therefore \cite[Lemma 3]{FG}
can be extended to regular potentials of arbitrary polynomial growth
and \eqref{eiepsilon} also holds for $q>1$.
 }
\end{oss}

\section{Global attractor in 2D}
\setcounter{equation}{0} In this section
we first prove that in 2D we can define a generalized
semiflow on a suitable metric space $\mathcal{X}_{m_0}$ which is point
dissipative and eventually bounded. Furthermore, we show that such
generalized semiflow possesses a (unique) global attractor, provided
that the potential $F$ is bounded in $(-1,1)$ (like, e.g., \eqref{log}).
The argument is a generalization
of the one used in \cite{FG} and based on \cite{Ba}.
Henceforth, we refer to \cite{Ba} for the basic definitions and results
on the theory of generalized semiflows.

Consider system \eqref{eq1}-\eqref{eq4} endowed with
\eqref{eq5} for $d=2$ and assume that the external force $h$ is
time-independent, i.e.,
\begin{description}
\item[(A9)]  $h\in V_{div}'.$
\end{description}

The first step is to define a suitable metric space for the weak
    solutions and consequently to construct a
    generalized semiflow. To this aim, fix $m_0\in(0,1)$ and introduce
    the metric space
\begin{align}
&\mathcal{X}_{m_0}:=G_{div}\times\mathcal{Y}_{m_0},
\end{align}
where
\begin{align}
\label{phasesp2}
&\mathcal{Y}_{m_0}:=\{\varphi\in L^{\infty}(\Omega):
|\varphi|<1\mbox{ a.e. in }\Omega,\:\:\:F(\varphi)\in L^1(\Omega),
\:\:\:|\overline{\varphi}|\leq m_0\}.
\end{align}
The space $\mathcal{X}_{m_0}$ is endowed with the metric
\begin{align}
&\boldsymbol{d}(z_1,z_2):=
\Vert u_1-u_2\Vert+\Vert\varphi_1-\varphi_2\Vert+
\Big|\int_{\Omega}F(\varphi_1)-\int_{\Omega}F(\varphi_2)\Big|^{1/2},
\end{align}
for every $z_1:=[u_1,\varphi_1]$ and $z_2:=[u_2,\varphi_2]$ in $\mathcal{X}_{m_0}$.
Let us denote by $\mathcal{G}$ the set of all weak solutions corresponding to
all initial data $z_0=[u_0,\varphi_0]\in \mathcal{X}_{m_0}$.
We prove that $\mathcal{G}$ is a generalized semiflow on $\mathcal{X}_{m_0}$.
\begin{prop}
\label{gensemifl} Let $d=2$ and suppose that (A1)-(A7) and (A9)
hold. Then $\mathcal{G}$ is a generalized semiflow on
$\mathcal{X}_{m_0}$.
\end{prop}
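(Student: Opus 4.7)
The plan is to verify Ball's four axioms (H1)--(H4) for $\mathcal{G}$ on $(\mathcal{X}_{m_0},\boldsymbol{d})$. Axiom (H1) is a direct consequence of Theorem~\ref{existence}: every $z_0\in\mathcal{X}_{m_0}$ generates a weak solution on each interval $[0,T]$, and since $\overline{\varphi}(t)=\overline{\varphi_0}$ together with $|\varphi(x,t)|<1$ a.e.\ and the $L^1$-bound on $F(\varphi(t))$ implied by \eqref{ei} ensure that $z(t)\in\mathcal{X}_{m_0}$ for every $t\in[0,T]$, a standard diagonal argument extends the trajectory to $[0,\infty)$. Axiom (H2) is immediate because $h$ is autonomous by (A9), so the equations are translation-invariant in time; axiom (H3) (concatenation) follows by splicing test functions in the weak formulation and exploiting the continuity-in-time $u\in C_w([0,\infty);G_{div})$, $\varphi\in C([0,\infty);H)$ provided by Theorem~\ref{existence}.

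\textbf{Uniform bounds and limit identification for (H4).} Given $z_{0,n}\to z_0$ in $\mathcal{X}_{m_0}$ and $z_n\in\mathcal{G}$ starting from $z_{0,n}$, convergence in $\boldsymbol{d}$ implies that $\mathcal{E}(z_{0,n})$ is uniformly bounded (this is the role of the $F$-integral term in the metric). The energy inequality \eqref{ei} then yields, on every $[0,T]$, uniform bounds of $u_n$ in $L^\infty(0,T;G_{div})\cap L^2(0,T;V_{div})$, of $\nabla\mu_n$ in $L^2(0,T;H)$, and of $\int_\Omega F(\varphi_n(t))$. Reproducing the $c_0$-convexity estimate of Lemma~\ref{Fe''} and the Kenmochi-type argument from Section~3, I then obtain bounds for $\varphi_n$ and $\mu_n$ in $L^2(0,T;V)$ and, by comparison in the equations (for $d=2$), bounds for $\varphi_n'$ in $L^2(0,T;V')$ and $u_n'$ in $L^2(0,T;V_{div}')$. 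A standard Aubin--Lions/diagonal extraction produces a limit $z=[u,\varphi]$ satisfying the analogues of \eqref{c1}--\eqref{c8}; the argument of Section~3 based on (A7) and on the uniform $L^1$-bound of $H'(\varphi_n)$ gives $|\varphi|<1$ a.e.\ in $Q$ and $F'(\varphi_n)\to F'(\varphi)$ a.e. Passing to the limit in the weak formulation then yields $z\in\mathcal{G}$ with $z(0)=z_0$.

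\textbf{Pointwise-in-time strong convergence (the main obstacle).} The delicate step is to upgrade these convergences to convergence in $\mathcal{X}_{m_0}$ for every $t\geq 0$, namely
\begin{align*}
u_n(t)\to u(t)\ \text{in } G_{div},\qquad \varphi_n(t)\to\varphi(t)\ \text{in } H,\qquad \int_\Omega F(\varphi_n(t))\to\int_\Omega F(\varphi(t)).
\end{align*}
The key tool is the 2D energy identity \eqref{idendiff}, which both $z_n$ and the limit $z$ satisfy. Integrating it between $0$ and $t$, and using the convergences $\mathcal{E}(z_{0,n})\to\mathcal{E}(z_0)$ and $\int_0^t\langle h,u_n\rangle d\tau\to\int_0^t\langle h,u\rangle d\tau$ together with the lower semicontinuity of the dissipation integrals under weak convergence (cf.\ \eqref{ddhh}), I get $\limsup_n\mathcal{E}(z_n(t))\leq\mathcal{E}(z(t))$; the matching $\liminf$ inequality comes from the lower semicontinuity of $\mathcal{E}$ established via the decomposition \eqref{Fqconvex}. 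Hence $\mathcal{E}(z_n(t))\to\mathcal{E}(z(t))$ for every $t\geq 0$. Combining this norm convergence with the weak convergences $u_n(t)\rightharpoonup u(t)$ in $G_{div}$ and $\varphi_n(t)\rightharpoonup\varphi(t)$ in $H$ (which hold pointwise in $t$ by an Arzel\`a--Ascoli argument on $[0,T]$ using the time-derivative bounds) upgrades them to the desired strong convergences in $G_{div}$ and in $H$. Finally, the convergence $\int_\Omega F(\varphi_n(t))\to\int_\Omega F(\varphi(t))$ follows from dominated convergence, since $F$ is assumed bounded on $(-1,1)$ and $\varphi_n(t)\to\varphi(t)$ a.e.\ in $\Omega$ along a further subsequence. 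This establishes (H4) and completes the proof.
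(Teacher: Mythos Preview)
Your argument tracks the paper's proof closely through the extraction of the limit solution, but the final step---upgrading to convergence in $\mathcal{X}_{m_0}$ for every $t\geq 0$---contains a genuine gap. You invoke ``$F$ is assumed bounded on $(-1,1)$'' to apply dominated convergence for the term $\int_\Omega F(\varphi_n(t))$. However, Proposition~\ref{gensemifl} does \emph{not} carry this hypothesis: boundedness of $F$ is introduced only later, in the proposition on the existence of the global attractor. Under (A1)--(A7) and (A9) alone, $F$ may blow up at $\pm 1$, so dominated convergence is unavailable and your final step fails as written.

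A related issue is the sentence ``combining this norm convergence with the weak convergences \dots upgrades them to the desired strong convergences in $G_{div}$ and in $H$.'' The energy $\mathcal{E}$ is not a squared Hilbert norm of $(u,\varphi)$: it mixes $\|u\|^2$, the nonlocal quadratic form $\frac{1}{2}\|\sqrt{a}\varphi\|^2-\frac{1}{2}(\varphi,J\ast\varphi)$, and the nonlinear term $\int_\Omega F(\varphi)$. Knowing only that $\mathcal{E}(z_n(t))\to\mathcal{E}(z(t))$ and that $u_n(t)\rightharpoonup u(t)$, $\varphi_n(t)\rightharpoonup\varphi(t)$ does not by itself force each piece to converge; in particular, without control of the $\int_\Omega F$ term you cannot isolate $\|\varphi_n(t)\|^2$. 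The paper closes this circularity by the $x$-dependent decomposition
\[
F(s)=G(x,s)-\Big(a(x)-\frac{c_0}{2}\Big)\frac{s^2}{2},\qquad c_0=\alpha+\beta+\min_{[-1,1]}F_2''>0,
\]
which absorbs the $\|\sqrt{a}\varphi\|^2$ term and rewrites the energy as
\[
\mathcal{E}(z)=\frac{1}{2}\|u\|^2+\frac{c_0}{4}\|\varphi\|^2-\frac{1}{2}(\varphi,J\ast\varphi)+\int_\Omega G(x,\varphi)\,dx,
\]
with $G(x,\cdot)$ strictly convex and $\varphi\mapsto (\varphi,J\ast\varphi)$ weakly continuous. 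Now energy convergence plus weak convergence forces each of the three weakly lower semicontinuous summands to converge separately, yielding simultaneously $\|u_n(t)\|\to\|u(t)\|$, $\|\varphi_n(t)\|\to\|\varphi(t)\|$, and $\int_\Omega G(x,\varphi_n(t))\to\int_\Omega G(x,\varphi(t))$---hence also $\int_\Omega F(\varphi_n(t))\to\int_\Omega F(\varphi(t))$---without any boundedness assumption on $F$. Replacing your dominated-convergence step by this decomposition (as in \cite[Proposition~3]{FG}) repairs the argument.
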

\begin{proof}
It can be seen immediately that hypotheses (H1), (H2) and (H3) of
the definition of generalized semiflow \cite[Definition 2.1]{Ba} are
satisfied. It remains to prove the upper semicontinuity with respect to
initial data, i.e., that $\mathcal{G}$ satisfies (H4) of \cite[Definition
2.1]{Ba}. We can argue as in \cite[Proposition 3]{FG}. Thus we only
give the main steps of the proof. Consider a sequence
$\{z_j\}\subset\mathcal{G}$, with $z_j:=[u_j,\varphi_j]$ such that
$z_j(0):=[u_{j0},\varphi_{j0}]\to z_0:=[u_0,\varphi_0]$ in
$\mathcal{X}_{m_0}$. We have to show that there exist a subsequence
$\{z_{j_k}\}$ and a weak solution $z\in\mathcal{G}$ with
$z(0)=z_0$ such that $z_{j_k}(t)\to z(t)$ for each $t\geq 0$. Now,
every weak solution $z_j$ satisfies the energy identity
\eqref{idendiff} so that
\begin{align}
&
\mathcal{E}(z_j(t))+\int_0^t\Big(2\Vert\sqrt{\nu(\varphi_j)}Du_j(\tau)\Vert^2+\|\nabla\mu_j(\tau)\|^2\Big)d\tau
=\mathcal{E}(z_{j0})
+\int_0^t\langle h,u_j(\tau)\rangle d\tau,
\label{enidentity}
\end{align}
where $z_{j0}:=z_j(0)$. From this identity and using the assumptions
on  $F$ we deduce estimates of the form
\eqref{best1}-\eqref{dest3}. Furthermore, since
$|\varphi_{0j}|\leq m_0$ and $m_0\in(0,1)$ is fixed, we can repeat the
argument used in the existence proof to control the sequence of the
averages of the approximated chemical potentials (see
\eqref{Pe1abst}-\eqref{Ken2}) and get
\begin{align}
&\Vert F'(\varphi_j)\Vert_{L^1(\Omega)}\leq L_{m_0},
\label{wx}
\end{align}
where $L_{m_0}\in L^2(0,T)$. Hence, an estimate of the form
\eqref{best5} for $\mu_j$ holds. From these estimates we deduce
the existence of a couple $z=[u,\varphi]$ and of a function $\mu$
with $u$, $\varphi$ and $\mu$  having the regularity properties
\eqref{re1}-\eqref{prmu} and such that \eqref{c1}-\eqref{c8} hold
for suitable subsequences of $\{u_j\}$, $\{\varphi_j\}$ and
$\{\mu_j\}$. In order to prove that $z=[u,\varphi]$ is a weak
solution by passing to the limit in the variational formulation for $z_j$
we need to know that \eqref{re7} is satisfied for $\varphi$. To this
aim we use the same argument we applied to the sequence of
approximate solutions $\{\phie\}$ (cf. proof of Theorem
\ref{existence}).

More precisely, for $\eta\in(0,1)$ fixed and for a.a. fixed $t>0$, we can introduce the sets
$$E^j_{1,\eta}:=\{\varphi_j(x,t)>1-\eta\},\qquad E^j_{2,\eta}:=\{\varphi_j(x,t)>-1+\eta\},$$
and so we have
$$H'(1-\eta)|E^j_{1,\eta}|\leq\Vert H'(\varphi_j)\Vert_{L^1(\Omega)},
\qquad |H'(-1+\eta)||E^j_{2,\eta}|\leq\Vert H'(\varphi_j)\Vert_{L^1(\Omega)},$$
where $H$ is defined as in \eqref{H}. Therefore, recalling \eqref{wx}, by first letting
$j\to\infty$ and then $\eta\to 0$ we can deduce that
$$|\varphi(x,t)|<1\qquad\mbox{for a.e. }x\in\Omega\;\mbox{and for a.e. }t>0.$$
On the other hand, since we also have
$$u_j(t)\rightharpoonup u(t)\quad\mbox{weakly in }G_{div},\qquad
\varphi_j(t)\rightharpoonup\varphi(t)\quad\mbox{weakly in
}H,\qquad\forall t\geq 0,$$ then $z(0)=z_0$. It remains to prove the
convergence of the sequence $\{z_j(t)\}$ to $z(t)$ in
$\mathcal{X}_{m_0}$ for each $t\geq 0$. Reasoning as in \cite{FG},
we represent the singular potential F as follows
$$F(s)=G(x,s)-\Big(a(x)-\frac{c_0}{2}\Big)\frac{s^2}{2},$$
where $c_0=\alpha+\beta+\min_{[-1,1]}F_2''>0$. Here, due to
(A6), the function $G(x,\cdot)$ is strictly convex in $(-1,1)$ for a.e.
$x\in\Omega$. Therefore, the energy $\mathcal{E}$ can still be
written as
$$\mathcal{E}(z)=\frac{1}{2}\|u\|^2+\frac{c_0}{4}\|\varphi\|^2-\frac{1}{2}(\varphi,J\ast\varphi)
+\int_{\Omega}G(x,\varphi(x))dx,\qquad\forall
z=[u,\varphi]\in\mathcal{X}_{m_0},$$ and the same argument used in
\cite[Proposition 3]{FG} applies.
\end{proof}
As done for regular potentials (see \cite{FG}), a dissipativity property
of the generalized semiflow $\mathcal{G}$ can be proven in the case
of singular (bounded) potentials.
\begin{prop}
\label{dissipative} Let $d=2$ and suppose that (A1)-(A7), (A9) hold.
Then $\mathcal{G}$ is point dissipative and eventually bounded.
\end{prop}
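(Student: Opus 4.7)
The plan is to derive an exponential-decay estimate for the energy $\mathcal{E}(z(t))$ by turning the 2D energy identity \eqref{idendiff} into a dissipative differential inequality of Gronwall type, and then translate the resulting uniform bound on $\mathcal{E}$ into the required properties on $\mathcal{X}_{m_0}$. Boundedness of $F$ in $(-1,1)$ (which holds for \eqref{log}) is the crucial extra ingredient that makes the contribution of $\int_{\Omega}F(\varphi)$ to $\mathcal{E}$ uniformly bounded above and below, so that $\mathcal{E}$ becomes equivalent (up to additive constants depending on $m_0$, $J$ and $F$) to $\tfrac{1}{2}\|u\|^2 + \tfrac{c_0}{4}\|\varphi\|^2$, exactly as in the strictly convex decomposition already used in the upper semicontinuity proof.

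First I would exploit the right-hand side of \eqref{idendiff}: since $h$ is time-independent by (A9), Young's inequality gives $\langle h,u\rangle\le \tfrac{\nu_1}{2}\|\nabla u\|^2 + C\|h\|_{V_{div}'}^2$, while (A2) and the identity $2\|Du\|^2=\|\nabla u\|^2$ on $V_{div}$ yield $2\|\sqrt{\nu(\varphi)}Du\|^2 \ge \nu_1 \|\nabla u\|^2$. The forcing is thus absorbed, producing
\begin{equation*}
\frac{d}{dt}\mathcal{E}(z) + \frac{\nu_1}{2}\|\nabla u\|^2 + \|\nabla\mu\|^2 \le C\|h\|_{V_{div}'}^2 .
\end{equation*}
The velocity half is handled by Poincaré in $V_{div}$, giving $\tfrac{\nu_1}{4}\|\nabla u\|^2\ge \tfrac{\nu_1\lambda_1}{4}\|u\|^2$. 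For the $\varphi$ half, I would use the estimate $\|\nabla\mu\|^2\ge \tfrac{c_0^2}{4}\|\nabla\varphi\|^2 - k\|\varphi\|^2$ already derived in the proof of Theorem~\ref{existence}, together with the conservation of mean $\overline{\varphi}(t)=\overline{\varphi_0}$ and $|\overline{\varphi_0}|\le m_0<1$; the Poincaré-Wirtinger inequality $\|\varphi-\overline{\varphi_0}\|^2 \le C_p\|\nabla\varphi\|^2$ then allows me to control $\|\varphi\|^2$ by $\|\nabla\varphi\|^2 + C(m_0)$. Combining a small fraction of $\|\nabla\mu\|^2$ with the bound $\|\varphi\|^2 \le 2C_p\|\nabla\varphi\|^2 + 2|\Omega|m_0^2$ and absorbing the residual $k\|\varphi\|^2$, I obtain a coercivity of the form $\|\nabla\mu\|^2 \ge \tilde\gamma \|\varphi\|^2 - \tilde K(m_0)$.

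Using the decomposition \eqref{Fqconvex} written in the form $F(s)=G(x,s)-(a(x)-c_0/2)s^2/2$ already introduced in Proposition~\ref{gensemifl} and the boundedness of $F$ in $(-1,1)$ (so that $F_{\min}|\Omega| \le \int_\Omega F(\varphi) \le F_{\max}|\Omega|$), the energy is equivalent to $\tfrac{1}{2}\|u\|^2 + \tfrac{c_0}{4}\|\varphi\|^2$ up to additive constants depending only on $m_0$, $J$, and $F$. Putting these estimates together produces the differential inequality
\begin{equation*}
\frac{d}{dt}\mathcal{E}(z) + \gamma\, \mathcal{E}(z) \le K,
\end{equation*}
for constants $\gamma>0$ and $K=K(m_0,h,J,F,\Omega)$ independent of the trajectory. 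Gronwall's lemma then yields $\mathcal{E}(z(t))\le \mathcal{E}(z(0))\,e^{-\gamma t} + K/\gamma$, which is the fundamental dissipative estimate.

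From this bound, point dissipativity follows at once: the ball $\{z\in\mathcal{X}_{m_0}:\mathcal{E}(z)\le 2K/\gamma\}$ is bounded in $\mathcal{X}_{m_0}$ (again using boundedness of $F$ and Poincaré-Wirtinger with $|\overline{\varphi}|\le m_0$), and every trajectory enters it after a finite time depending only on $\mathcal{E}(z(0))$. For eventual boundedness, observe that the Gronwall bound depends on the initial condition only through $\mathcal{E}(z(0))$, which is uniformly bounded on bounded subsets of $\mathcal{X}_{m_0}$ (the metric $\boldsymbol{d}$ controls $\|u_0\|$, $\|\varphi_0\|$ and $|\int F(\varphi_0)|$). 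The main obstacle I expect is the coercivity step: balancing the $\|\nabla\mu\|^2$-based control of $\|\nabla\varphi\|^2$ against the lower-order term $-k\|\varphi\|^2$ and forcing a genuinely positive $\gamma$ in the differential inequality; this is exactly the place where the strict positivity of $c_0$ from (A6), the strict bound $|\overline{\varphi_0}|\le m_0<1$, and the boundedness of $F$ all enter decisively.
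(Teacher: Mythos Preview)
Your argument imports an assumption that is not part of the proposition: you explicitly rely on $F$ being bounded on $(-1,1)$ in order to make $\int_{\Omega}F(\varphi)$ a uniformly bounded term and hence to close the Gronwall inequality $\tfrac{d}{dt}\mathcal{E}+\gamma\mathcal{E}\le K$. But Proposition~\ref{dissipative} only assumes (A1)--(A7) and (A9); the boundedness of $F$ is invoked later, for the existence of the global attractor, not here. Without that extra hypothesis your chain breaks precisely at the step where you need $\gamma\int_{\Omega}F(\varphi)$ to be absorbed into the constant $K$: if $F$ blows up at $\pm 1$, nothing you have written controls $\int_{\Omega}F(\varphi)$ from above. (Incidentally, the elaborate Poincar\'e--Wirtinger argument for $\|\varphi\|^2$ is superfluous: since $|\varphi|<1$ a.e.\ you have $\|\varphi\|^2\le|\Omega|$ for free, so the only genuinely problematic contribution to $\mathcal{E}$ is $\int_{\Omega}F(\varphi)$.)

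The paper circumvents this by a structurally different estimate, namely the inequality \eqref{auxest} (stated and used again in the proof of Proposition~\ref{dissprop}): for zero-mean solutions one has $\mathcal{E}(z)\le k_1\bigl(\tfrac{\nu_1}{2}\|\nabla v\|^2+\|\nabla\mu\|^2\bigr)+k_2$. The point is that $\int_{\Omega}F(\psi)$ is bounded not by a constant but by the \emph{dissipation} $\|\nabla\mu\|^2$. This uses the identity $F'(\psi)=\mu-a\psi+J\ast\psi$, the convexity of $G$ from the decomposition \eqref{Fqconvex} (giving $G(\psi)\le G(0)+G'(\psi)\psi$), and Poincar\'e--Wirtinger applied to $\mu-\overline{\mu}$; combined with $|\psi|<1$ one gets $\int_{\Omega}F(\psi)\le C(\|\nabla\mu\|^2+1)$. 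After reducing to the zero-mean case by the shift $\widetilde{\psi}=\psi-\overline{\psi_0}$, $\widetilde{F}(s)=F(s+\overline{\psi_0})-F(\overline{\psi_0})$, this yields the dissipative estimate \eqref{diss2D} with the additive term $F(\overline{\varphi_0})|\Omega|$, which is finite because $|\overline{\varphi_0}|\le m_0<1$, and no boundedness of $F$ near $\pm 1$ is needed. To repair your proof you should replace the ``$F$ bounded'' shortcut by this coercivity of the dissipation over the full energy.
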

\begin{proof}
Recalling the proof of \cite[Corollary 2]{CFG} a dissipative
estimate can be established, namely,
\begin{align}
\label{diss2D}
&
\mathcal{E}(z(t))\leq \mathcal{E}(z_0)e^{-kt}+ F(\overline{\varphi_0})|\Omega| + K,
\qquad\forall t\geq 0,
\end{align}
where $k$, $K$ are two positive constants
which are independent of the initial data, with $K$ depending on
$\Omega$, $\nu_1$, $J$, $F$, $\|h\|_{V_{div}'}$. From
\eqref{diss2D} we get (see \cite[Proposition 4]{FG})
$$\boldsymbol{d}^2(z(t),0)\leq c\mathcal{E}(z_0)e^{-kt}+cM_{m_0}+c,\qquad\forall t\geq 0,$$
which entails that the generalized semiflow $\mathcal{G}$ is point
dissipative and eventually bounded.
\end{proof}

We can now state the main result of this section.
\begin{prop}
Let $d=2$ and suppose that (A1)-(A7), (A9) hold. Furthermore, assume that $F$ is bounded
in $(-1,1)$.
Then $\mathcal{G}$ possesses a global attractor.
\end{prop}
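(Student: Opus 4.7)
By Ball's theorem on generalized semiflows (\cite[Theorem 3.3]{Ba}), since Proposition \ref{dissipative} already gives point dissipativity and eventual boundedness, the existence of a global attractor reduces to verifying \emph{asymptotic compactness}: for any sequence $\{z_j\}\subset\mathcal{G}$ whose initial data $\{z_j(0)\}$ lie in a bounded subset of $\mathcal{X}_{m_0}$ and any $t_j\to+\infty$, the sequence $\{z_j(t_j)\}$ admits a subsequence converging in $\mathcal{X}_{m_0}$. The plan is to prove this by the standard energy-equality method, adapted to the singular potential by exploiting the hypothesis that $F$ is bounded on $(-1,1)$.

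The first step is to perform a time shift. Fix $T>0$ and, for $j$ large enough so that $t_j>T$, introduce the shifted solutions $\tilde z_j(t):=z_j(t_j-T+t)$, $t\in[0,T]$. By Proposition \ref{dissipative}, the initial data $\tilde z_j(0)=z_j(t_j-T)$ are bounded in $\mathcal{X}_{m_0}$ uniformly in $j$. The energy equality \eqref{idendiff} together with the estimates in the proof of Theorem \ref{existence} (in particular the Kenmochi-type bound \eqref{wx} on $\|F'(\tilde\varphi_j)\|_{L^1(\Omega)}$, which is available here since $|\overline{\tilde\varphi_j(0)}|\le m_0<1$) then produces uniform bounds of the form \eqref{best1}--\eqref{dest3} and \eqref{best5} for $\tilde u_j,\tilde\varphi_j,\tilde\mu_j$ on $[0,T]$. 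Extracting a subsequence, the compactness argument already used in the proof of Theorem \ref{existence} yields a limit weak solution $z\in\mathcal{G}$ on $[0,T]$ such that $\tilde z_j\to z$ in the sense of \eqref{c1}--\eqref{c8}, with $|\varphi|<1$ a.e.\ and $\tilde z_j(\tau)\rightharpoonup z(\tau)$ weakly for every $\tau\in[0,T]$.

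The crux is upgrading the weak convergence at $\tau=T$ to strong convergence in $\mathcal{X}_{m_0}$, for which the key step is the convergence of the energy $\mathcal{E}(\tilde z_j(T))\to\mathcal{E}(z(T))$. Following the $F$-bounded version of \cite[Proposition 5]{FG}, write $\mathcal{E}(z)=\tfrac12\|u\|^2+\tfrac{c_0}{4}\|\varphi\|^2-\tfrac12(\varphi,J\ast\varphi)+\int_\Omega G(x,\varphi)$, where $G(x,\cdot)$ is strictly convex on $(-1,1)$; lower semicontinuity of $\mathcal{E}$ under weak convergence in $G_{div}\times H$ follows from convexity of the first two terms, strong $L^2$-convergence for the convolution term, and Fatou for $\int G$. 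To obtain the matching upper bound, pick $\tau\in(0,T)$ in a set of full measure where, after extracting a further subsequence, $\tilde u_j(\tau)\to u(\tau)$ strongly in $G_{div}$ and $\tilde\varphi_j(\tau)\to\varphi(\tau)$ a.e.\ (which comes for free from the strong convergences in \eqref{c2} and \eqref{c6}); since $F$ is bounded on $(-1,1)$ and $|\tilde\varphi_j|<1$, dominated convergence gives $\int F(\tilde\varphi_j(\tau))\to\int F(\varphi(\tau))$, hence $\mathcal{E}(\tilde z_j(\tau))\to\mathcal{E}(z(\tau))$. Subtracting the energy equality \eqref{idendiff} for $\tilde z_j$ on $[\tau,T]$ from the one for $z$, using weak lower semicontinuity of the dissipation terms $2\|\sqrt{\nu(\cdot)}D\cdot\|^2+\|\nabla\cdot\|^2$ together with \eqref{ddhh} and \eqref{c8}, and the weak convergence $\tilde u_j\rightharpoonup u$ in $L^2(\tau,T;V_{div})$, one concludes
\[
\limsup_{j\to\infty}\mathcal{E}(\tilde z_j(T))\le\mathcal{E}(z(\tau))+\mathcal{E}(z(T))-\mathcal{E}(z(\tau))=\mathcal{E}(z(T)).
\]
Combined with the liminf inequality, this yields $\mathcal{E}(\tilde z_j(T))\to\mathcal{E}(z(T))$.

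From the convergence of the total energy, together with weak convergence in $G_{div}\times H$, a.e.\ convergence of $\tilde\varphi_j(T)$ to $\varphi(T)$, boundedness of $F$, and strict convexity of $G(x,\cdot)$, one separates out each non-negative term: $\|\tilde u_j(T)\|\to\|u(T)\|$, $\|\tilde\varphi_j(T)\|\to\|\varphi(T)\|$, and $\int F(\tilde\varphi_j(T))\to\int F(\varphi(T))$ (this last by dominated convergence). Hence $\tilde u_j(T)\to u(T)$ strongly in $G_{div}$, $\tilde\varphi_j(T)\to\varphi(T)$ strongly in $H$, and the third component of $\boldsymbol{d}$ tends to zero. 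Therefore $z_j(t_j)=\tilde z_j(T)\to z(T)$ in $\mathcal{X}_{m_0}$, proving asymptotic compactness. The main obstacle is the last step of splitting the energy convergence into convergence of each non-negative ingredient; the boundedness of $F$ on $(-1,1)$ is precisely what makes the free energy part behave well under dominated convergence (in contrast with the regular potential case where polynomial growth was used), and the strictly convex decomposition \eqref{Fqconvex} then allows the same Ball-type argument as in \cite[Proposition 5]{FG} to go through.
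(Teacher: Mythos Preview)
Your proof is correct and rests on the same energy-equality argument as the paper: boundedness of $F$ on $(-1,1)$ plus dominated convergence gives $\mathcal{E}(z_j(\tau))\to\mathcal{E}(z(\tau))$ at a good intermediate time, and then the energy identity propagates this to strong convergence in $\mathcal{X}_{m_0}$ at the target time. The only difference is cosmetic: the paper checks Ball's \emph{compactness} condition (convergence of $z_j(t)$ for every $t>0$ when $\{z_j(0)\}$ is bounded) and invokes \cite[Proposition 3.2, Theorem 3.3]{Ba}, whereas you verify \emph{asymptotic compactness} directly via the time-shift $\tilde z_j(\cdot)=z_j(t_j-T+\cdot)$; one small imprecision is that a.e.\ convergence of $\tilde\varphi_j(T)$ is not available a priori but is in fact a consequence (after passing to a subsequence) of the strong $H$-convergence you obtain from the energy splitting, so the dominated-convergence step for $\int_\Omega F$ should come after, not alongside, that splitting.
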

\begin{proof}
In light of Proposition \ref{dissipative} and by \cite[Proposition 3.2]{Ba}
and \cite[Theorem 3.3]{Ba}
we only need to show that $\mathcal{G}$ is compact.
Let $\{z_j\}\subset\mathcal{G}$ be a sequence with $\{z_j(0)\}$ bounded in $\mathcal{X}_{m_0}$. We claim that
there exists a subsequence $\{z_{j_k}\}$ such that $z_{j_k}(t)$ converges in $\mathcal{X}_{m_0}$
for every $t>0$.
Indeed, the energy identity \eqref{enidentity} entails the existence of a subsequence (not relabeled)
such that (see the proof of Proposition \ref{gensemifl}) for almost all $t>0$
$$u_j(t)\to u(t)\quad\mbox{strongly in }G_{div},\qquad
\varphi_j(t)\to\varphi(t)\quad\mbox{strongly in }H\mbox{ and a.e. in }\Omega,$$
where $z=[u,\varphi]$ is a weak solution.
Since $F$ is bounded in $(-1,1)$, by Lebesgue's theorem we therefore have
$$\int_{\Omega}F(\varphi_j(t))\to\int_{\Omega}F(\varphi(t)),\qquad\mbox{a.e. }t>0.$$
Hence $\mathcal{E}(z_j(t))\to\mathcal{E}(z(t))$ for almost all
$t>0$. Thus, arguing as in \cite[Theorem 3, Proposition 3]{FG}, we
deduce that $z_j(t)\to z(t)$ in $\mathcal{X}_{m_0}$ {\itshape for all}
$t>0$, which yields the compactness of $\mathcal{G}$.
\end{proof}

We can also prove the existence of
the global attractor for the convective nonlocal Cahn-Hilliard equation
with $u\in L^{\infty}(\Omega)^d \cap V_{div}$, $d=2,3$. Indeed,
thanks to Corollary \ref{NLCH1}, we can define a semigroup $S(t)$ on $\mathcal{Y}_{m_0}$ (cf.
\eqref{phasesp2}) endowed the metric
\begin{equation*}
\bar{\mathbf{d}}(\varphi_1,\varphi_2)=
\|\varphi_1-\varphi_2\|+\Big|\int_{\Omega}
F(\varphi_1)-\int_{\Omega}F(\varphi_2)\Big|^{1/2},
\quad \forall\,\varphi_1,\varphi_2 \in \mathcal{Y}_{m_0}.
\end{equation*}
Then we have
\begin{thm}
\label{NLCHattr} Let $u\in L^{\infty}(\Omega)^d\cap V_{div}$ be
given. Suppose that (A1), (A3)-(A7) are satisfied and assume that $F$
is bounded in $(-1,1)$. Then the dynamical system
$(\mathcal{Y}_{m_0},S(t))$ possesses a connected global attractor.
\end{thm}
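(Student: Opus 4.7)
The plan is to verify the standard sufficient conditions for the existence of a global attractor for the semigroup $S(t)$ on the metric space $(\mathcal{Y}_{m_0},\bar{\mathbf{d}})$: namely continuous dependence on initial data, existence of a bounded absorbing set, and asymptotic compactness. Connectedness will then be obtained from the fact that $\mathcal{Y}_{m_0}$ is convex (hence connected) together with the continuity of $S(t)$. The argument closely parallels the one already carried out for the full system in Proposition~\ref{gensemifl} and the subsequent propositions, but is considerably simpler because Corollary~\ref{NLCH1} already provides uniqueness, so $S(t)$ is a genuine semigroup rather than a generalized semiflow.

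First, I would prove continuous dependence on initial data in the $\bar{\mathbf{d}}$-metric. Writing the equation satisfied by the difference $\varphi_1-\varphi_2$ of two solutions sharing the same given velocity $u$, testing by $\mathcal{N}(\varphi_1-\varphi_2)$ (the average of the difference is zero since both have the same mean, inherited from Corollary \ref{NLCH1}), and exploiting $F_1''\geq\alpha$ (A6) together with the boundedness of $u$ in $L^\infty\cap V_{div}$, a standard Gronwall argument yields continuity in the $L^2$-norm uniformly on compact time intervals. The additional $|\int F(\varphi_1)-\int F(\varphi_2)|^{1/2}$ term in $\bar{\mathbf{d}}$ is handled using the hypothesis that $F$ is bounded in $(-1,1)$: passing to an a.e.\ convergent subsequence and applying Lebesgue's dominated convergence theorem gives convergence of the energies.

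Next, I would extract a dissipative estimate from the energy identity \eqref{energyCH}. Since $u$ is time-independent and divergence-free with $u\in L^\infty(\Omega)^d$, Young's inequality yields
\begin{equation*}
|(u\varphi,\nabla\mu)|\leq \tfrac{1}{2}\|\nabla\mu\|^2 + \tfrac{1}{2}\|u\|_\infty^2\|\varphi\|^2,
\end{equation*}
and combining this with the coercivity estimate $\|\nabla\mu\|^2\geq (c_0^2/4)\|\nabla\varphi\|^2-k\|\varphi\|^2$ (as derived in the proof of Theorem~\ref{existence}), the conservation of mean value, and the lower bound on $F$ coming from (A3)--(A4) in the style of Lemma~\ref{Fe}, one obtains an estimate of the form $\mathcal{E}_{CH}(\varphi(t))\leq \mathcal{E}_{CH}(\varphi_0)e^{-kt}+ F(\overline{\varphi_0})|\Omega| + K$ analogous to \eqref{diss2D}, which provides a bounded absorbing set $\mathcal{B}_0\subset\mathcal{Y}_{m_0}$.

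Finally, for asymptotic compactness I would take a bounded sequence $\{\varphi_{j0}\}\subset\mathcal{Y}_{m_0}$ with corresponding solutions $\varphi_j(t)=S(t)\varphi_{j0}$. The uniform bounds \eqref{best1}--\eqref{best5} (restricted to the Cahn-Hilliard part) transfer verbatim; the Kenmochi-type estimate \eqref{Ken}--\eqref{Ken2} applied to $F'(\varphi_j)$ gives the same $L^1$-bound used to show $|\varphi(x,t)|<1$ a.e.\ via the sets $E^j_{i,\eta}$ exactly as in the proof of Proposition~\ref{gensemifl}. Aubin--Lions compactness then yields, along a subsequence, $\varphi_j(t)\to\varphi(t)$ strongly in $H$ and a.e.\ in $\Omega$ for a.e.\ $t>0$, with $\varphi$ a weak solution starting from the weak limit of $\{\varphi_{j0}\}$; the hypothesis that $F$ is bounded in $(-1,1)$ then delivers $\int_\Omega F(\varphi_j(t))\to\int_\Omega F(\varphi(t))$ by dominated convergence, giving $\bar{\mathbf{d}}(\varphi_j(t),\varphi(t))\to 0$ for a.e.\ $t>0$. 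To upgrade this to all $t>0$, I would argue as in the proof of compactness of $\mathcal{G}$, using the energy identity \eqref{energyCH} satisfied by each $\varphi_j$ and in the limit by $\varphi$. Connectedness of the resulting attractor then follows because $\mathcal{B}_0$ can be chosen convex (as $\mathcal{Y}_{m_0}$ is convex and the absorbing energy sublevel sets are convex in view of \eqref{Fqconvex}) and the attractor equals $\bigcap_{t\geq 0}\overline{S(t)\mathcal{B}_0}$, a decreasing intersection of connected compact sets. The main obstacle is the verification of asymptotic compactness up to and including the energy term, where one must convert weak $L^2$ convergence into strong convergence of $\int F(\varphi_j(t))$; this is precisely where the boundedness of $F$ on $(-1,1)$ is essential, since without it the pointwise bound $|\varphi|<1$ alone would not dominate $F(\varphi_j)$.
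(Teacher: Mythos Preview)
Your outline is correct and follows the same route the paper intends (the paper's proof is simply a pointer to \cite[Proof of Theorem 4]{FG}, which proceeds via continuous dependence, dissipativity from the energy identity, and compactness exactly as you describe). Two small slips are worth flagging. First, in the continuous-dependence step you cannot test by $\mathcal{N}(\varphi_1-\varphi_2)$ as written: two solutions with distinct initial data in $\mathcal{Y}_{m_0}$ need not share the same spatial mean (Corollary~\ref{NLCH1} only says each solution conserves \emph{its own} mean), so $\varphi_1-\varphi_2\notin V_0'$ in general; you must test by $\mathcal{N}\big((\varphi_1-\varphi_2)-\overline{\varphi_1-\varphi_2}\big)$ and carry the constant $\overline{\varphi_{10}}-\overline{\varphi_{20}}$ through the estimates. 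Second, the claim that energy sublevel sets are convex via \eqref{Fqconvex} is not right, since $\alpha_\ast$ can be negative (e.g.\ $\alpha_\ast=\theta-\theta_c<0$ for the logarithmic potential in the interesting regime) and the nonlocal term $-\tfrac{1}{2}(\varphi,J\ast\varphi)$ is not convex either; however, this does not affect the conclusion because the path-connectedness of $\mathcal{Y}_{m_0}$ that you already invoke is by itself sufficient for connectedness of the attractor.
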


The proof goes as in \cite[Proof of Theorem 4]{FG}.

% % % % % % % % % % % % % % % % % % % % % % % % % % % % % % % % % % % % % % % % % % % % % % % % % % % % % % % % % % % % % % % % % % % % % % % % % % % % % % % % % % % % % % % % % % % % % % % % % % % % % % % % % % % % % % % % %
% % % % % % % % % % % % % % % % % % % % % % % % % % % % % % % % % % % % % % % % % % % % % % % % % % % % % % % % % % % % % % % % % % % % % % % % % % % % % % % % % % % % % % % % % % % % % % % % % % % % % % % % % % % % % % % % % % % % % % % % % % % % % % % % % % % % % % % % % % % % % % % % % % % % % % % % % % % % % % % % % % % % % % % % % % % %
\section{Existence of a trajectory attractor}
\setcounter{equation}{0} In this section, by relying on the theory
developed in \cite{CV,CV2} (see also \cite{Se}), we prove that a
trajectory attractor can be constructed for the nonlocal
Cahn-Hilliard-Navier-Stokes system \eqref{eq1}-\eqref{eq4} subject
to \eqref{eq5} with $F$ satisfying (A3)-(A7). The construction of the
trajectory attractor for problem \eqref{eq1}-\eqref{eq5} in the case
of regular potentials with polynomial growth has been done in
\cite{FG}. We concentrate on the 3D case.

%Since
We shall need a slightly more general functional setting than the one
devised in \cite{CV}. Indeed, in order to construct a trajectory
attractor without any boundedness assumption on the potential $F$,
we must define a family of bounded sets of
trajectories with a suitable attraction property.
Henceforth, we refer to \cite{CV} for the main definitions and
notation. The idea is to take a subspace $\mathcal{F}_b^+$ of the
space $\mathcal{F}_{loc}^+$ (where $\mathcal{F}_{loc}^+$ as
well as its topology $\Theta_{loc}^+$ are defined as in \cite{CV}) on
which a metric $d_{\mathcal{F}_b^+}$ is given and assume that the
trajectory space $\mathcal{K}_{\sigma}^+$ corresponding to the
symbol $\sigma\in\Sigma$ satisfies
$\mathcal{K}_{\sigma}^+\subset\mathcal{F}_b^+$, for every
$\sigma\in\Sigma$. This approach is in the spirit of the theory of
$(\mathcal{M},\mathcal{T})-$attractors in \cite[Chap. XI, Section
3]{CV2}, where $\mathcal{T}$ is a topological space where some
metric is defined and $\mathcal{M}$ is the corresponding metric
space.

Consider the united trajectory space
$\mathcal{K}_{\Sigma}^+:=\cup_{\sigma\in\Sigma}\mathcal{K}_{\sigma}^+$
of the family $\{\mathcal{K}_{\sigma}^+\}_{\sigma\in\Sigma}$.
We have $\mathcal{K}_{\Sigma}^+\subset \mathcal{F}_b^+$  and
if the family $\{\mathcal{K}_{\sigma}^+\}_{\sigma\in\Sigma}$ is
translation-coordinated then we have
$T(t)\mathcal{K}_{\Sigma}^+\subset\mathcal{K}_{\Sigma}^+$,
for every $t\geq 0$, i.e., the translation semigroup $\{T(t)\}$ acts on
$\mathcal{K}_{\Sigma}^+$. Introduce now the family
$$\mathcal{B}_{\Sigma}:=\left\{B\subset\mathcal{K}_{\Sigma}^+:B\mbox{ bounded in } \mathcal{F}_b^+
\mbox{ w.r.t. the metric }d_{\mathcal{F}_b^+}\right\}.$$
We shall refer to this family in the definition
of a uniformly (w.r.t $\sigma\in\Sigma$) attracting set $P\subset\mathcal{F}_{loc}^+$
for $\{\mathcal{K}_{\sigma}^+\}_{\sigma\in\Sigma}$ in the topology $\Theta_{loc}^+$
and in the definition of the uniform (w.r.t. $\sigma\in\Sigma$) trajectory attractor
$\mathcal{A}_{\Sigma}$ of the translation semigroup $\{T(t)\}$.

To prove some properties of the trajectory attractor we need that the
set $\mathcal{K}_{\Sigma}^+$ be closed in $\Theta_{loc}^+$.
Recall that the family
$\{\mathcal{K}_{\sigma}^+\}_{\sigma\in\Sigma}$ is called
$(\Theta_{loc}^+,\Sigma)-$closed if the graph set
$\cup_{\sigma\in\Sigma}\mathcal{K}_{\sigma}^+\times\{\sigma\}$
is closed in the topological space $\Theta_{loc}^+\times\Sigma$. If
$\{\mathcal{K}_{\sigma}^+\}_{\sigma\in\Sigma}$ is
$(\Theta_{loc}^+,\Sigma)-$closed and $\Sigma$ is compact, then
$\mathcal{K}_{\Sigma}^+$ is closed in $\Theta_{loc}^+$.

\begin{oss}
\upshape{ We shall see that (cf. Proposition \ref{trajclosedness}),
although by means of the topological-metric scheme above the
boundedness assumption on the potential $F$ can be avoided as far as
the construction of the trajectory attractor for system
\eqref{eq1}-\eqref{eq5} with singular potential is concerned, it
seems difficult to get rid of such an assumption when one wants to prove
the closedness of the trajectory space $\mathcal{K}_{\Sigma}^+$. }
\end{oss}

We now state the main abstract result which
can be established by applying \cite[Chap. XI, Theorem 2.1]{CV2} to the topological
space $\mathcal{F}_{loc}^+$, to the family $\mathcal{B}_{\Sigma}$
and to the family
$$\mathcal{B}_{\omega(\Sigma)}:=\left\{B\subset\mathcal{K}_{\omega(\Sigma)}^+:B\mbox{ bounded in } \mathcal{F}_b^+
\mbox{ w.r.t. the metric }d_{\mathcal{F}_b^+}\right\},$$
where $\mathcal{K}_{\omega(\Sigma)}^+:=\cup_{\sigma\in\omega(\Sigma)}\mathcal{K}_{\sigma}^+$
and where $\omega(\Sigma)$ is the $\omega-$limit set of $\Sigma$,
(see also \cite[Theorem 3.1]{CV}).
\begin{thm}
\label{trajattract}
Let the spaces $(\mathcal{F}_{loc}^+,\Theta_{loc}^+)$ and $(\mathcal{F}_b^+,d_{\mathcal{F}_b^+})$
be as above, and the family of trajectory spaces $\{\mathcal{K}_{\sigma}^+\}_{\sigma\in\Sigma}$
corresponding to the evolution equation with symbols $\sigma\in\Sigma$ be such that
$\mathcal{K}_{\sigma}^+\subset\mathcal{F}_b^+$, for every $\sigma\in\Sigma$.
%Let $(\mathcal{F}_{loc}^+,\Theta_{loc}^+)$ be a Hausdorff space with a countable base
%and let $\mathcal{F}_b^+$ be a subspace of $\mathcal{F}_{loc}^+$ which
%is metric with metric $d_{\mathcal{F}_b^+}$.
%Suppose that the family of trajectory spaces $\{\mathcal{K}_{\sigma}^+\}_{\sigma\in\Sigma}$
%corresponding to an evolution equation with symbols $\sigma\in\Sigma$ satisfies
%$\mathcal{K}_{\sigma}^+\subset\mathcal{F}_b^+$, for every $\sigma\in\Sigma$.
Assume there exists a subset $P\subset\mathcal{F}_{loc}^+$
which is compact in $\Theta_{loc}^+$ and uniformly (w.r.t. $\sigma\in\Sigma$)
attracting in $\Theta_{loc}^+$
for the family $\{\mathcal{K}_{\sigma}^+\}_{\sigma\in\Sigma}$ in the
topology $\Theta_{loc}^+$.
% of $d_{\mathcal{F}_{loc}^+}-$bounded subsets of $\mathcal{K}_{\Sigma}^+$.
Then, the translation semigroup $\{T(t)\}_{t\geq 0}$, which acts on
$\mathcal{K}_{\Sigma}^+$ if the family
$\{\mathcal{K}_{\sigma}^+\}_{\sigma\in\Sigma}$ is
translation-coordinated, possesses a (unique) uniform (w.r.t.
$\sigma\in\Sigma$) trajectory attractor
$\mathcal{A}_{\Sigma}\subset P$.
%for the family of $d_{\mathcal{F}_{loc}^+}-$bounded subsets of $\mathcal{K}_{\Sigma}^+$.
If the semigroup $\{T(t)\}_{t\geq 0}$ is continuous in $\Theta_{loc}^+$, then $\mathcal{A}_{\Sigma}$
is strictly invariant
$$T(t)\mathcal{A}_{\Sigma}=\mathcal{A}_{\Sigma},\qquad\forall t\geq 0.$$
In addition, if the family
$\{\mathcal{K}_{\sigma}^+\}_{\sigma\in\Sigma}$ is
translation-coordinated and $(\Theta_{loc}^+,\Sigma)-$closed, with
$\Sigma$ a compact metric space, then
$\mathcal{A}_{\Sigma}\subset\mathcal{K}_{\Sigma}^+$ and
$$\mathcal{A}_{\Sigma}=\mathcal{A}_{\omega(\Sigma)},$$
where $\mathcal{A}_{\omega(\Sigma)}$ is the uniform (w.r.t. $\sigma\in\omega(\Sigma)$) trajectory attractor
for the family $\mathcal{B}_{\omega(\Sigma)}$
%:of $d_{\mathcal{F}_{loc}^+}-$bounded subsets of $\mathcal{K}_{\omega(\Sigma)}^+$
and $\mathcal{A}_{\omega(\Sigma)}\subset\mathcal{K}_{\omega(\Sigma)}^+$.
\end{thm}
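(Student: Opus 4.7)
The plan is to deduce the result from the abstract $(\mathcal{M},\mathcal{T})$-attractor theorem \cite[Chap.~XI, Theorem~2.1]{CV2}, via the identifications $\mathcal{T}=(\mathcal{F}_{loc}^+,\Theta_{loc}^+)$, $\mathcal{M}=(\mathcal{F}_b^+,d_{\mathcal{F}_b^+})$, bornology $\mathcal{B}_{\Sigma}$, and dynamical system given by the translation semigroup $\{T(t)\}_{t\geq 0}$ acting on $\mathcal{K}_{\Sigma}^+$. The key feature of this scheme, which exactly matches our hypotheses, is that attraction is tested in the weaker topology $\Theta_{loc}^+$, whereas the admissible sets of trajectories are dictated by the stronger metric $d_{\mathcal{F}_b^+}$.

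The attractor is constructed explicitly as
\begin{equation*}
\mathcal{A}_{\Sigma} := \bigcap_{h\geq 0}\overline{\bigcup_{t\geq h} T(t)P}^{\,\Theta_{loc}^+}.
\end{equation*}
The $\Theta_{loc}^+$-compactness of $P$ yields non-emptiness and $\Theta_{loc}^+$-compactness of $\mathcal{A}_{\Sigma}$, while the uniform attracting property of $P$ transfers to $\mathcal{A}_{\Sigma}$; minimality among $\Theta_{loc}^+$-compact uniformly attracting sets gives uniqueness and the inclusion $\mathcal{A}_{\Sigma}\subset P$. Strict invariance, under $\Theta_{loc}^+$-continuity of $T(t)$, follows from the standard $\omega$-limit argument: continuity yields $T(t)\mathcal{A}_{\Sigma}\subset\mathcal{A}_{\Sigma}$, while compactness plus uniform attraction gives the reverse inclusion.

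For the final assertion, suppose $\Sigma$ is a compact metric space and the family $\{\mathcal{K}_{\sigma}^+\}$ is translation-coordinated and $(\Theta_{loc}^+,\Sigma)$-closed. Then $\mathcal{K}_{\Sigma}^+$ is itself $\Theta_{loc}^+$-closed, so $\mathcal{A}_{\Sigma}\subset\mathcal{K}_{\Sigma}^+$. Moreover, any $\chi\in\mathcal{A}_{\Sigma}$ is a $\Theta_{loc}^+$-limit $\chi=\lim_n T(t_n)\chi_n$ with $\chi_n\in\mathcal{K}_{\sigma_n}^+$ and $t_n\to+\infty$; compactness of $\Sigma$ allows extraction of a subsequence with $T(t_n)\sigma_n\to\sigma_*\in\omega(\Sigma)$, and the closed-graph property forces $\chi\in\mathcal{K}_{\sigma_*}^+\subset\mathcal{K}_{\omega(\Sigma)}^+$. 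Running the construction of the first two steps with $\omega(\Sigma)$ and $\mathcal{B}_{\omega(\Sigma)}$ in place of $\Sigma$ and $\mathcal{B}_{\Sigma}$ produces $\mathcal{A}_{\omega(\Sigma)}$, and the two-sided equality $\mathcal{A}_{\Sigma}=\mathcal{A}_{\omega(\Sigma)}$ then follows by combining strict invariance with the translation-invariance of $\omega(\Sigma)$.

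The main difficulty here is bookkeeping rather than analysis: each step of \cite[Chap.~XI, Theorem~2.1]{CV2} must be traced to verify that it uses only $\Theta_{loc}^+$-compactness and $\Theta_{loc}^+$-attraction of $P$, with the metric $d_{\mathcal{F}_b^+}$ intervening exclusively through the definition of the bornologies $\mathcal{B}_{\Sigma}$ and $\mathcal{B}_{\omega(\Sigma)}$, so that no hidden compatibility condition between the topology $\Theta_{loc}^+$ and the metric $d_{\mathcal{F}_b^+}$ is needed beyond the natural inclusion $\mathcal{F}_b^+\subset\mathcal{F}_{loc}^+$.
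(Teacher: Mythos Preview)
Your proposal is correct and follows the same route as the paper: the paper does not supply a detailed proof but simply states that the theorem can be established by applying \cite[Chap.~XI, Theorem~2.1]{CV2} (see also \cite[Theorem~3.1]{CV}) to the topological space $\mathcal{F}_{loc}^+$ and to the families $\mathcal{B}_{\Sigma}$, $\mathcal{B}_{\omega(\Sigma)}$. Your sketch merely fleshes out this citation with the $\omega$-limit construction and the closed-graph argument, so the underlying strategy is identical.
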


Suppose that for a given abstract nonlinear non-autonomous evolution
equation a dissipative estimate of the following form can be
established
\begin{align}
&d_{\mathcal{F}_b^+}(T(t)w,w_0)\leq\Lambda_0\Big(d_{\mathcal{F}_b^+}(w,w_0)\Big)e^{-kt}+\Lambda_1,
\qquad\forall t\geq t_0,
\label{abstdiss}
\end{align}
for every $w\in\mathcal{K}_{\Sigma}^+$, for some fixed
$w_0\in\mathcal{F}_b^+$ and for some
$\Lambda_0:[0,\infty)\to[0,\infty)$ locally bounded and some
constants $\Lambda_1\geq 0$, $k>0$, where $k$, $\Lambda_0$ and
$\Lambda_1$ are independent of $w$. Furthermore, assume that the
ball
$$B_{\mathcal{F}_b^+}(w_0,2\Lambda_1):=\{w\in\mathcal{F}_b^+:d_{\mathcal{F}_b^+}(w,w_0)\leq 2\Lambda_1\}$$
is compact in $\Theta_{loc}^+$. By virtue of \eqref{abstdiss} such
ball is a uniformly (w.r.t. $\sigma\in\Sigma$) attracting set for the
family $\{\mathcal{K}_{\sigma}^+\}_{\sigma\in\Sigma}$ in the
topology $\Theta_{loc}^+$ (actually,
$B_{\mathcal{F}_b^+}(w_0,2\Lambda_1)$ is uniformly (w.r.t.
$\sigma\in\Sigma$) absorbing for the family
$\mathcal{B}_{\Sigma}$). Theorem \ref{trajattract} therefore
entails that the translation semigroup $\{T(t)\}_{t\geq 0}$
possesses a (unique) uniform (w.r.t. $\sigma\in\Sigma$) trajectory
attractor $\mathcal{A}_{\Sigma}\subset
B_{\mathcal{F}_b^+}(w_0,2\Lambda_1)$. $\\$ Let us now turn to
\eqref{eq1}-\eqref{eq5} and apply to this system the scheme
described above. $\\$ For $q\geq 1$, $m_0\in(0,1)$ and for any
given $M>0$ we set
\begin{align*}
\mathcal{F}_M =\Big\{&[v,\psi] \in L^\infty(0,M;G_{div}\times L^{2+2q}(\Omega))\cap L^2(0,M;V_{div}\times V)\,:\\
&v_t \in L^{4/3}(0,M;V^\prime_{div}),\; \psi_t \in L^2(0,M;V^\prime),\nonumber\\
&\psi\in L^{\infty}(Q_M), \; |\psi|<1\;\mbox{a.e. in }Q_M, \; |\overline{\psi}|\leq m_0\Big\},
\end{align*}
where $Q_M=\Omega\times(0,M)$. We endow $\mathcal{F}_M$ with the weak topology $\Theta_M$ which induces the following notion of weak convergence:
a sequence $\{[v_m,\psi_m]\}\subset\mathcal{F}_M$ is said to converge
to $[v,\psi]\in\mathcal{F}_M$ in $\Theta_M$ if
\begin{align*}
&v_n\rightharpoonup v\quad
\mbox{weakly}^{\ast}\mbox{ in }L^{\infty}(0,M;G_{div})
\;\mbox{and weakly in }L^2(0,M;V_{div}),\\
&(v_n)_t\rightharpoonup v_t\quad\mbox{weakly in }L^{4/3}(0,M;V_{div}'),\\
&\psi_n\rightharpoonup\psi\quad
\mbox{weakly}^{\ast}\mbox{ in }L^{\infty}(0,M;L^{2+2q}(\Omega))
\;\mbox{and weakly in }L^2(0,M;V),\\
&(\psi_n)_t\rightharpoonup\psi_t\quad\mbox{weakly in }L^2(0,M;V').
\end{align*}
Then, we can define the space
\begin{align*}
\mathcal{F}^+_{loc} =\Big\{&[v,\psi] \in L^\infty_{loc}([0,\infty);G_{div}\times L^{2+2q}(\Omega))\cap L^2_{loc}([0,\infty); V_{div}\times V)\,:\\
&v_t \in L^{4/3}_{loc}([0,\infty);V^\prime_{div}),\; \psi_t \in L^2_{loc}([0,\infty);V^\prime),\\
&\psi\in L^{\infty}(Q_M),\;
|\psi|<1\;\mbox{a.e. in }Q_M,\;\forall M>0,\;|\overline{\psi}|\leq m_0\Big\},
\end{align*}
endowed with the inductive limit weak topology $\Theta_{loc}^+$. In
$\mathcal{F}^+_{loc}$ we consider the following subset
\begin{align*}
 \mathcal{F}^+_{b} =\Big\{&[v,\psi] \in L^\infty(0,\infty;G_{div}\times L^{2+2q}(\Omega))
 \cap L^2_{tb}(0,\infty; V_{div}\times V)\,:\\
 &v_t \in L^{4/3}_{tb}(0,\infty;V^\prime_{div}),\; \psi_t \in L^2_{tb}(0,\infty;V^\prime),\\
&\psi\in L^{\infty}(Q_{\infty}),\;
|\psi|<1\;\mbox{a.e. in }Q_{\infty},\;|\overline{\psi}|\leq m_0,\;
F(\psi)\in L^{\infty}(0,\infty;L^1(\Omega))\Big\},
 \end{align*}
where $Q_{\infty}:=\Omega\times(0,\infty)$, endowed with the
following metric
\begin{align}
d_{\mathcal{F}^+_{b}}(z_2,z_1):&=\|z_2-z_1\|_{L^{\infty}(0,\infty;G_{div}\times L^{2+2q}(\Omega))}+
\|z_2-z_1\|_{L^2_{tb}(0,\infty;V_{div}\times V)}\nonumber\\
&+\|(v_2)_t-(v_1)_t\|_{L^{4/3}_{tb}(0,\infty;V^\prime_{div})}
+\|(\psi_2)_t-(\psi_1)_t\|_{L^2_{tb}(0,\infty;V^\prime)}\nonumber\\
&+\Big\|\int_{\Omega}F(\psi_2)-\int_{\Omega}F(\psi_1)\Big\|_{L^{\infty}(0,\infty)}^{1/2},
\end{align}
for all $z_2:=[v_2,\psi_2]$,
$z_1:=[v_1,\psi_1]\in\mathcal{F}^+_{b}$. Here we recall that
$L^p_{tb}(0,\infty;X)$, $p\geq 1$ and $X$ being a Banach space, is
the Banach space of the translation bounded functions (see, e.g.,
\cite{CV2}).

For the trajectory space $\mathcal{K}_h^+$ corresponding to a
symbol $h$ we mean
\begin{defn}
For every $h\in L^2_{loc}([0,\infty);V_{div}')$ the trajectory space
$\mathcal{K}_{h}^+$ is the set of all weak solutions $z=[v,\psi]$ to
\eqref{eq1}-\eqref{eq5} with external force $h$ which belong to the
space $\mathcal{F}_{loc}^+$ and satisfy the energy inequality
\eqref{ei} for all $t\geq s$ and for a.a. $s\in(0,\infty)$.
\end{defn}
%\begin{oss}
%{\upshape
%We point out that in the definition of the trajectory space $\mathcal{K}_{h}^+$
%a bound on the average $\overline{\psi}$ of
%the weak solutions $z=[v,\psi]$ is assumed, namely
%$$|\overline{\psi}|\leq m_0,\qquad\forall z=[v,\psi]\in\mathcal{K}_{h}^+.$$
%}
%\end{oss}
\begin{oss}
{\upshape
Notice that in the definition of the trajectory space
$\mathcal{K}_{h}^+$ we do not assume that the energy
inequality \eqref{ei} is satisfied also for $s=0$.
In this way the family $\{\mathcal{K}_{h}^+\}_{h\in\Sigma}$
($\Sigma$ is a generic symbol space included in $L^2_{loc}([0,\infty);V_{div}')$)
is translation-coordinated and therefore the semigroup $\{T(t)\}$ acts on
$\mathcal{K}_{\Sigma}^+$.
}
\end{oss}

According to Theorem \ref{existence}, if (A1)-(A7) hold, then for
every $z_0=[v_0,\psi_0]$ such that
$$v_0\in G_{div},\qquad\psi_0\in L^{\infty}(\Omega),\qquad\|\psi_0\|_{\infty}\leq 1,
\qquad F(\psi_0)\in L^1(\Omega),$$
and every $h$ satisfying (A8) there exists a trajectory $z\in\mathcal{K}_{h}^+$
for which $z(0)=z_0$.

Let us consider now
$$h_0\in L^2_{tb}(0,\infty;V_{div}'),$$
and observe that $h_0$ is translation compact in
$L^2_{loc,w}([0,\infty);V_{div}')$ (see, e.g., \cite[Proposition
6.8]{CV}). As symbol space $\Sigma$ we take the compact metric
space given by the hull of $h_0$ in
$L^2_{loc,w}([0,\infty);V_{div}')$
$$\Sigma=\mathcal{H}_+(h_0):=[\{T(t)h_0:t\geq 0\}]_{L^2_{loc,w}([0,\infty);V_{div}')},$$
where $[\cdot]_X$ denotes the closure in $X$. Recall that every
$h\in\mathcal{H}_+(h_0)$ is translation compact in
$L^2_{loc,w}([0,\infty);V_{div}')$ as well (see \cite[Proposition
6.9]{CV}) and
\begin{align}
&\Vert h\Vert_{L^2_{tb}(0,\infty;V_{div}')}\leq\Vert h_0\Vert_{L^2_{tb}(0,\infty;V_{div}')},
\qquad\forall h\in\mathcal{H}_+(h_0).
\label{tb}
\end{align}

Hence we can state the main result of this section.
\begin{thm}
\label{trattex} Let (A1)-(A7) hold and assume $h_0\in
L^2_{tb}(0,\infty;V_{div}')$. Then, the translation semigroup
$\{T(t)\}$ acting on $\mathcal{K}_{\mathcal{H}_+(h_0)}^+$
possesses the uniform (w.r.t.
$h\in\mathcal{K}_{\mathcal{H}_+(h_0)}^+$) trajectory attractor
$\mathcal{A}_{\mathcal{H}_+(h_0)}$. This set is strictly invariant,
bounded in $\mathcal{F}_b^+$ and compact in $\Theta_{loc}^+$.
In addition, if the potential $F$ is bounded on $(-1,1)$, then
$\mathcal{K}_{\mathcal{H}_+(h_0)}^+$ is closed in $\Theta_{loc}^+$,
$\mathcal{A}_{\mathcal{H}_+(h_0)}\subset\mathcal{K}_{\mathcal{H}_+(h_0)}^+$
and we have
$$\mathcal{A}_{\mathcal{H}_+(h_0)}=\mathcal{A}_{\omega(\mathcal{H}_+(h_0))}.$$
\end{thm}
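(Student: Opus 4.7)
The strategy is to verify the hypotheses of Theorem \ref{trajattract} applied to the family $\{\mathcal{K}_h^+\}_{h\in\mathcal{H}_+(h_0)}$ with symbol space $\Sigma=\mathcal{H}_+(h_0)$. Translation-coordination holds by construction: since the energy inequality \eqref{ei} is only required for a.e.\ $s\in(0,\infty)$ (cf.\ the preceding remark), time shifts preserve membership of trajectories, with the shifted external force being $T(\tau)h\in\mathcal{H}_+(h_0)$ by definition of the hull. It therefore suffices to exhibit a set $P\subset\mathcal{F}_{loc}^+$ which is compact in $\Theta_{loc}^+$ and uniformly attracting for $\{\mathcal{K}_h^+\}$.

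\textbf{Dissipative estimate.} Starting from \eqref{ei}, I would use the decomposition \eqref{Fqconvex} together with Lemma \ref{Fe} (applied to $F$, which inherits the $|s|^{2+2q}$ lower bound from $F_1$ via assumptions (A3)--(A4) and the continuity of $F_2$) to obtain
\begin{equation*}
\mathcal{E}(u,\varphi)\geq \tfrac{1}{2}\|u\|^2+c_q\|\varphi\|_{L^{2+2q}(\Omega)}^{2+2q}-C,
\end{equation*}
with $C$ independent of the trajectory. Using $2\|\sqrt{\nu(\varphi)}Du\|^2\geq \nu_1\|\nabla u\|^2$, Poincar\'e's inequality in $V_{div}$, Poincar\'e--Wirtinger applied to $\varphi-\overline{\varphi}$ (recall $|\overline{\varphi}(t)|=|\overline{\varphi_0}|\leq m_0<1$), and absorbing $\langle h,u\rangle$ via Young's inequality, a standard Gronwall argument combined with the uniform translation bound \eqref{tb} on $\mathcal{H}_+(h_0)$ yields an exponential-plus-constant estimate of the form \eqref{abstdiss}, with $w_0=0$. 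Control of $\mu$ (and hence of $u_t$, $\varphi_t$) in the norm of $\mathcal{F}_b^+$ follows, as in Section~3, by bounding $\overline{\mu}$ through the Kenmochi-type inequality \eqref{Ken}--\eqref{Ken2}; crucially, the constants $\delta$, $K(\overline{\varphi_0})$ are uniform over $\mathcal{K}_{\mathcal{H}_+(h_0)}^+$ since $|\overline{\varphi_0}|\leq m_0$.

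\textbf{Compactness.} Set $P:=B_{\mathcal{F}_b^+}(0,2\Lambda_1)$. For any sequence in $P$, Banach--Alaoglu supplies subsequences converging weakly (or weakly-$\ast$) in every space defining $\Theta_{loc}^+$, and Aubin--Lions yields strong convergence of $\varphi_n$ in $L^2_{loc}([0,\infty);H)$ and of $u_n$ in $L^2_{loc}([0,\infty);G_{div})$, together with a.e.\ pointwise convergence on $Q_\infty$. The constraint $|\psi|<1$ a.e.\ passes to the limit by the argument \eqref{DebDet1}--\eqref{DebDet2}, applied with the $m_0$-uniform $L^1$-bound \eqref{Ken2} on $H'(\psi_n)$. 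Hence $P\subset\mathcal{F}_{loc}^+$ is compact in $\Theta_{loc}^+$, and by the dissipative estimate it is uniformly absorbing. Theorem \ref{trajattract} thus gives the uniform trajectory attractor $\mathcal{A}_{\mathcal{H}_+(h_0)}\subset P$, bounded in $\mathcal{F}_b^+$ and compact in $\Theta_{loc}^+$; strict invariance follows because $\{T(t)\}$ is continuous on $\Theta_{loc}^+$.

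\textbf{Closedness when $F$ is bounded.} This is the main obstacle and the only place where the boundedness hypothesis enters. Given $z_n\in\mathcal{K}_{h_n}^+$ with $z_n\to z$ in $\Theta_{loc}^+$ and $h_n\to h$ in $L^2_{loc,w}([0,\infty);V_{div}')$, the convergences above allow one to pass to the limit in the variational formulation \eqref{weakfor1}--\eqref{weakfor2} exactly as in the proof of Theorem \ref{existence}, so that $z$ solves the system with external force $h$. The delicate step is to recover the energy inequality \eqref{ei} for $z$ on almost every interval $[s,t]$. Using the convex splitting \eqref{Fqconvex}, the kinetic, gradient and quadratic terms behave by weak lower semicontinuity and Fatou; the potential contribution is handled by the test-function scheme \eqref{hx}, provided one can pass to the limit in $\int_\Omega F(\varphi_n(t))$. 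Since $F$ is bounded on $(-1,1)$ and $|\varphi_n|<1$, Lebesgue's dominated convergence theorem (applied after extracting a.e.\ pointwise convergence in $x$ via Aubin--Lions for a.e.\ $t$) gives $\int_\Omega F(\varphi_n(t))\to\int_\Omega F(\varphi(t))$, which is the missing upper bound needed to transfer the energy inequality to $z$. This proves the $(\Theta_{loc}^+,\Sigma)$-closedness of $\{\mathcal{K}_h^+\}_{h\in\mathcal{H}_+(h_0)}$; together with the compactness of $\mathcal{H}_+(h_0)$ in $L^2_{loc,w}([0,\infty);V_{div}')$, the final clause of Theorem \ref{trajattract} yields $\mathcal{A}_{\mathcal{H}_+(h_0)}\subset\mathcal{K}_{\mathcal{H}_+(h_0)}^+$ and the identity $\mathcal{A}_{\mathcal{H}_+(h_0)}=\mathcal{A}_{\omega(\mathcal{H}_+(h_0))}$.
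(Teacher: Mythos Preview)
Your proposal is correct and follows essentially the same route as the paper: verify the hypotheses of the abstract Theorem~\ref{trajattract} by (i) a dissipative estimate of the form \eqref{abstdiss} yielding a uniformly absorbing ball in $\mathcal{F}_b^+$, (ii) (pre)compactness of that ball in $\Theta_{loc}^+$ via Banach--Alaoglu/Aubin--Lions, and (iii) under the extra hypothesis that $F$ is bounded on $(-1,1)$, $(\Theta_{loc}^+,\Sigma)$-closedness by passing to the limit in both the variational formulation and the energy inequality with Lebesgue's theorem handling $\int_\Omega F(\psi_m)$. The paper simply packages (i) and (iii) as Propositions~\ref{dissprop} and~\ref{trajclosedness}; in particular its dissipative estimate is obtained by first shifting to the zero-mean variable $\widetilde\psi=\psi-\overline{\psi_0}$ and invoking the bound $\mathcal{E}(z)\le k_1(\tfrac{\nu_1}{2}\|\nabla v\|^2+\|\nabla\mu\|^2)+k_2$ from \cite{CFG}, whereas you sketch a direct Gronwall argument---both lead to \eqref{abstdiss}. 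One small imprecision: in your compactness paragraph you invoke the Kenmochi-type $L^1$ bound \eqref{Ken2} on $H'(\psi_n)$ for arbitrary elements of the ball $P\subset\mathcal{F}_b^+$, but that bound uses the equation and so applies only to trajectories; the paper accordingly speaks of \emph{pre}compactness of the ball, the strict constraint $|\psi|<1$ being recovered only along sequences of actual solutions (which is all that is needed for the attractor construction).
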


The proof of Theorem \ref{trattex} is based on two propositions. The
first one establishes a dissipative estimate of the form
\eqref{abstdiss} for weak solutions to \eqref{eq1}-\eqref{eq5}.

\begin{prop}
\label{dissprop} Let (A1)-(A7) hold and let $h_0\in
L^2_{tb}(0,\infty;V_{div}')$. Then, for all
$h\in\mathcal{H}_+(h_0)$, we have
$\mathcal{K}_h^+\subset\mathcal{F}_b^+$ and the following
dissipative estimate holds
\begin{align}
d_{\mathcal{F}_b^+}(T(t)z,0)\leq\Lambda_0\Big(d_{\mathcal{F}_b^+}(z,0)\Big)e^{-kt}+\Lambda_1,
\qquad\forall t\geq 1,
\label{tradissest}
\end{align}
for all $z\in\mathcal{K}_h^+$. Here $\Lambda_0:[0,\infty)\to[0,\infty)$ is a nonnegative
monotone increasing continuous function, $k$ and $\Lambda_1$ are two positive
constants with $k=\min(1/2,\lambda_1\nu_1)$,
$\lambda_1$ being the first
eigenvalue of the Stokes operator $S$. Moreover,
$\Lambda_0$, $\Lambda_1$ depend
on $\nu_1,\nu_2,\lambda_1,F,J,|\Omega|$,
and $\Lambda_1$ also depends on
$\Vert h_0\Vert_{L^2_{tb}(0,\infty;V_{div}')}$ and on $m_0$.
\end{prop}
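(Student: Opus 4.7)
The plan is to start from the energy inequality \eqref{ei} satisfied by any $z = [u,\varphi] \in \mathcal{K}_h^+$ and to bound each of the five seminorms composing $d_{\mathcal{F}_b^+}$ via a dissipative estimate for $\mathcal{E}(z(\cdot))$, in the spirit of \cite[Corollary 2]{CFG} and \cite[Proposition 4]{FG}. The first step is to establish a differential-form inequality
\[
\frac{d}{dt}\mathcal{E}(z) + k\,\mathcal{E}(z) \leq C\bigl(1 + \|h\|_{V_{div}'}^2\bigr),
\]
with $k=\min(1/2,\lambda_1\nu_1)$. Using (A2) one bounds $2\|\sqrt{\nu(\varphi)}Du\|^2 \geq \nu_1\|\nabla u\|^2 \geq \lambda_1\nu_1\|u\|^2$ and absorbs the forcing via Young's inequality $\langle h,u\rangle \leq (\nu_1/2)\|\nabla u\|^2 + C\|h\|_{V_{div}'}^2$, which accounts for the kinetic part of $\mathcal{E}$. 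The concentration/potential contributions are handled by combining (i) the lower bound $\|\nabla\mu\|^2 \geq (c_0^2/4)\|\nabla\varphi\|^2 - k'\|\varphi\|^2$ already used in Section 3 and (ii) the Kenmochi-type estimate \eqref{Ken2} applied directly to $\varphi$, which, thanks to $|\overline{\varphi}(t)| = |\overline{\varphi_0}| \leq m_0 < 1$, uniformly controls $\|F'(\varphi)\|_{L^1(\Omega)}$; this in turn controls both $\int_{\Omega} F(\varphi)$ and the average $\overline{\mu}$ through the identity $\overline{\mu}=\overline{F'(\varphi)}$. Integrating by Gronwall's lemma and exploiting \eqref{tb} yields
\[
\mathcal{E}(z(t)) \leq \Lambda_0\bigl(\mathcal{E}(z(0))\bigr)\,e^{-kt} + K, \qquad \forall\,t\geq 0,
\]
with $K$ depending on the data only through $\nu_1,\nu_2,\lambda_1,F,J,|\Omega|,m_0$ and $\|h_0\|_{L^2_{tb}(0,\infty;V_{div}')}$.

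From this $L^\infty$-in-time control of $\mathcal{E}$, Lemma \ref{Fe} (applied to $F$ itself, by passing to the limit $\e\to 0$) directly yields the exponentially decaying bounds on $\|u(t)\|$, $\|\varphi(t)\|_{L^{2+2q}(\Omega)}$ and $\|F(\varphi(t))\|_{L^1(\Omega)}$. To obtain the $L^2_{tb}$ bounds, I would fix $t\geq 1$, select a good $s\in(t-1,t)$ at which \eqref{ei} holds, and integrate \eqref{ei} from $s$ to $t+1$; this produces a uniform-in-$t$ bound on $\int_t^{t+1}\bigl(\|\nabla u\|^2 + \|\nabla\mu\|^2\bigr)\,d\tau$, giving the $L^2_{tb}$-norms of $u$ in $V_{div}$ and (after controlling $\overline{\mu}$ in $L^2_{tb}(0,\infty)$ via \eqref{Ken2} and Poincaré--Wirtinger) of $\mu$ in $V$. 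The inequality (i) above then transfers the $L^2_{tb}$ control from $\nabla\mu$ to $\nabla\varphi$, completing the $L^2_{tb}(V)$ bound on $\varphi$. The need to pick $s$ in $(t-1,t)$ so that \eqref{ei} holds at $s$ explains why the time restriction $t\geq 1$ appears in \eqref{tradissest}.

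Finally, the bounds on $\varphi_t$ in $L^2_{tb}(V')$ and $u_t$ in $L^{4/3}_{tb}(V_{div}')$ follow by comparison in \eqref{eq1}--\eqref{eq2}, combining the previous bounds with $|\varphi|<1$ a.e.\ from \eqref{re7} to handle the Korteweg term (rewritten as $\varphi\nabla\mu$ in \eqref{weakfor2}), exactly as in \cite{CFG}. The main technical obstacle is the first step: closing the Gronwall argument for $\mathcal{E}$ in the presence of the singular potential $F$ crucially requires the Kenmochi-type estimate \eqref{Ken2}, which is available only because $|\overline{\varphi_0}|\leq m_0<1$. The degeneration of the constant $\Lambda_1$ as $m_0\to 1^-$ is therefore intrinsic and reflects the fact that pure-phase configurations sit on the boundary of the admissible set.
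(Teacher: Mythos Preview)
Your overall strategy---dominate the energy by the dissipation, run a Gronwall argument, then read off each piece of $d_{\mathcal{F}_b^+}$---matches the paper. Two steps, however, do not go through as written. First, you work with a \emph{differential} inequality $\frac{d}{dt}\mathcal{E}(z)+k\mathcal{E}(z)\leq C(1+\|h\|_{V_{div}'}^2)$ and anchor the result at $\mathcal{E}(z(0))$. In three dimensions neither is available: trajectories in $\mathcal{K}_h^+$ satisfy only the integral energy inequality \eqref{ei}, for all $t\geq s$ and for a.e.\ $s>0$, and by definition of $\mathcal{K}_h^+$ the value $s=0$ is \emph{excluded}. The paper works entirely in integral form: it first establishes the pointwise auxiliary bound $\mathcal{E}(z)\leq k_1\bigl(\tfrac{\nu_1}{2}\|\nabla v\|^2+\|\nabla\mu\|^2\bigr)+k_2$ (for zero-mean $\psi$, after the shift $\widetilde{\psi}=\psi-\overline{\psi_0}$, cf.\ \cite[Corollary~2]{CFG}), inserts it into \eqref{ei} to get an integral inequality of Gronwall type valid for a.e.\ $s$, and applies the integral Gronwall lemma \cite[Lemma~1]{FG}. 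The resulting bound carries $\sup_{s\in(0,\infty)}\mathcal{E}(z(s))$ on the right, which is then controlled by $d_{\mathcal{F}_b^+}(z,0)^{2+2q}$ via the $L^\infty$-in-time components of the metric; this is also where the explicit value $k=\min(1/2,\lambda_1\nu_1)=1/k_1$ comes from.

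Second, your appeal to \eqref{Ken2} does not deliver what you need. That estimate was obtained by testing the equation with $\mathcal{N}(F'(\varphi)-\overline{F'(\varphi)})$, so its right-hand side $L_{\overline{\varphi_0}}$ involves $\|\varphi_t\|_{V'_0}$ and is only in $L^2$ of time; it does \emph{not} give a uniform pointwise control of $\|F'(\varphi)\|_{L^1(\Omega)}$ or of $\int_\Omega F(\varphi)$, and therefore cannot be fed back into a pointwise Gronwall loop. What is required is precisely the algebraic domination \eqref{auxest} of the energy by the instantaneous dissipation. One can recover it in the singular setting by combining the raw Kenmochi inequality \eqref{Ken} (not \eqref{Ken2}) with $F'(\psi)=\mu-a\psi+J\ast\psi$ and Poincar\'e--Wirtinger for $\mu-\overline{\mu}$; the paper obtains it more cleanly through the mean-shifting trick, which also makes the $m_0$-dependence of $\Lambda_1$ enter transparently via the additive term $F(\overline{\psi_0})|\Omega|$. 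Once \eqref{auxest} and the integral Gronwall are in place, your treatment of the $L^2_{tb}$ and time-derivative terms is correct and coincides with the paper's reference to \cite[Proposition~7]{FG}.
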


\begin{proof}
The following estimate can be obtained by arguing as in the proof of
\cite[Corollary 2]{CFG} (see also the proof of \cite[Theorem
5]{FG}). There exist two positive constants $k_1$, $k_2$ such that
\begin{align}
&\mathcal{E}(z)\leq k_1\Big(\frac{\nu_1}{2}\Vert\nabla v\Vert^2+\Vert\nabla\mu\Vert^2\Big)+k_2,
\label{auxest}
\end{align}
for every weak solution $z=[v,\psi]$ to  \eqref{eq1}-\eqref{eq5}
satisfying $\overline{\psi}=0$. Furthermore, it can be shown that
$k_1=\max(2,1/\lambda_1\nu_1)$.

Take now $z=[v,\psi]\in\mathcal{K}_h^+$ with
$h\in\mathcal{H}_+(h_0)$ and set
$\widetilde{z}=[v,\widetilde{\psi}]$, where
$\widetilde{\psi}:=\psi-\overline{\psi}$. Recall that
$\overline{\psi}=\overline{\psi_0}$. It is easily seen that
$\widetilde{z}$ is a weak solution to the same system where the
potential $F$ and the viscosity $\nu$ are replaced by, respectively,
$$\widetilde{F}(s):=F(s+\overline{\psi}_0)-F(\overline{\psi_0}),\qquad\widetilde{\nu}(s):=\nu(s+\overline{\psi_0}).$$
Since $z$ satisfies \eqref{ei} for all $t\geq s$ and for a.a.
$s\in(0,\infty)$, then an energy inequality of the same form as
\eqref{ei} also holds for $\widetilde{z}$, namely,
\begin{align}
& &\widetilde{\mathcal{E}}(\widetilde{z}(t))+\int_s^t(2\|\sqrt{\widetilde{\nu}(\widetilde{\psi})}Dv\|^2
+\|\nabla\widetilde{\mu}\|^2)d\tau\leq\widetilde{\mathcal{E}}(\widetilde{z}(s))+\int_s^t\langle h(\tau),
v(\tau)\rangle d\tau,
\label{eisttilde}
\end{align}
for all $t\geq s$ and for a.a. $s\in(0,\infty)$, where we have set
$$\widetilde{\mathcal{E}}(\widetilde{z}(t)):
=\frac{1}{2}\|v(t)\|^2
+\frac{1}{4}\int_{\Omega}\int_{\Omega}J(x-y)(\widetilde{\psi}(x,t)-\widetilde{\psi}(y,t))^2
dxdy+\int_{\Omega}\widetilde{F}(\widetilde{\psi}(t))$$ and
$\widetilde{\mu}:=a\widetilde{\psi}-J\ast\widetilde{\psi}+\widetilde{F}'(\widetilde{\psi})
=a\psi-J\ast\psi+F'(\psi)=\mu$. The weak solution $\widetilde{z}$
fulfills $(\widetilde{\psi},1)=0$ and therefore \eqref{auxest} can be
applied to $\widetilde{z}$. Such estimate and \eqref{eisttilde} entail
the inequality
\begin{align*}
&\widetilde{\mathcal{E}}(\widetilde{z}(t))
 +\frac{1}{k_1}\int_0^t\widetilde{\mathcal{E}}(\widetilde{z}(\tau))d\tau
\leq \frac{k_2}{k_1}(t-s)+\frac{1}{2\nu_1}\int_s^t\|h(\tau)\|_{V_{div}'}^2d\tau\nonumber\\
&+\widetilde{\mathcal{E}}(\widetilde{z}(s))
 +\frac{1}{k_1}\int_0^s\widetilde{\mathcal{E}}(\widetilde{z}(\tau))d\tau,\qquad
\forall t\geq s,\quad\mbox{a.a. }s\in(0,\infty).
 \end{align*}
By means of the identity
 \begin{align*}
 &\widetilde{\mathcal{E}}(\widetilde{z}(t))=\mathcal{E}(z(t))-F(\overline{\psi_0})|\Omega|,
 \end{align*}
from the previous inequality we get
\begin{align}
&\mathcal{E}(z(t))+k\int_0^t\mathcal{E}(z(\tau))
 d\tau\leq l(t-s)+\frac{1}{2\nu_1}\int_s^t\|h(\tau)\|_{V_{div}'}^2d\tau
 +\mathcal{E}(z(s))
 +k\int_0^s\mathcal{E}(z(\tau))d\tau,
 \label{intGronw}
\end{align}
for all $t\geq s$ and for a.a. $s\in(0,\infty)$, where $k=1/k_1$ and
$l=k_2/k_1+F(\overline{\psi_0})|\Omega|/k_1$. By applying \cite[Lemma 1]{FG}
from \eqref{intGronw} we deduce that
\begin{align}
&\mathcal{E}(z(t))
\leq \mathcal{E}(z(s))e^{-k(t-s)}
+\frac{1}{2\nu_1}\int_s^t e^{-k(t-\tau)}\left(\|h(\tau)\|_{V_{div}'}^2 + 2\nu_1 l \right)d\tau\nonumber\\
&\leq e^k\sup_{s\in(0,\infty)}\mathcal{E}(z(s))e^{-kt}+K^2,
\label{ta1}
\end{align}
for all $t\geq0$ and for a.a. $s\in(0,\infty)$, where
$$K^2=\frac{l}{k}+\frac{l}{2\nu_1(1-e^{-k})}
\Vert h_0\Vert_{L^2_{tb}(0,\infty;V_{div}')}^2.$$ Here we have
used \eqref{tb}. Note that $|\overline{\psi_0}|\leq m_0$ and therefore $K$ can be
estimated by a constant depending on
$\nu_1,\lambda_1,F,J,|\Omega|$ and on $h_0$, $m_0$. Observe
now that we have
\begin{align}
&C_1\Big(\Vert v(s)\Vert^2+\Vert\psi(s)\Vert_{L^{2+2q}(\Omega)}^{2+2q}
+\int_{\Omega}F(\psi(s))-1\Big)\nonumber\\
&\leq\mathcal{E}(z(s))\leq C_2\Big(\Vert v(s)\Vert^2+\Vert\psi(s)\Vert_{L^{2+2q}(\Omega)}^{2+2q}
+\int_{\Omega}F(\psi(s))+1\Big),
\label{ta2}
\end{align}
and therefore
\begin{align}
\sup_{s\in(0,\infty)}\mathcal{E}(z(s))&\leq
C_2\Big(\Vert v\Vert_{L^{\infty}(0,1;G_{div})}^2+\Vert\psi\Vert_{L^{\infty}(0,1;L^{2+2q}(\Omega))}^{2+2q}
+\sup_{s\in(0,1)}\int_{\Omega}F(\psi(s))+1\Big)\nonumber\\
&\leq C_3 d_{\mathcal{F}_b^+}^{2+2q}(z,0).
\label{ta3}
\end{align}
By combining \eqref{ta1} with \eqref{ta2} and \eqref{ta3} we get
\begin{align}
&\Vert v(t)\Vert^2+\Vert\psi(t)\Vert_{L^{2+2q}(\Omega)}^{2+2q}
+\int_{\Omega}F(\psi(t))\leq
c d_{\mathcal{F}_b^+}^{2+2q}(z,0)e^{-kt}+K^2+c,\qquad\forall t\geq 1,
\label{jw}
\end{align}
which yields
\begin{align}
&\Vert T(t)v\Vert_{L^{\infty}(0,\infty;G_{div})}^2
+\Vert T(t)\psi\Vert_{L^{\infty}(0,\infty;L^{2+2q}(\Omega))}^{2+2q}
+\Big\Vert\int_{\Omega}F(T(t)\psi)\Big\Vert_{L^{\infty}(0,\infty)}\nonumber\\
&\leq c d_{\mathcal{F}_b^+}^{2+2q}(z,0)e^{-kt}+K^2+c,\qquad\forall t\geq 1.
\label{3terms}
\end{align}
On account of the definition of the metric $d_{\mathcal{F}_b^+}$,
\eqref{3terms} allows to estimate three terms on the left hand side of
\eqref{tradissest}. The remaining four terms on the left hand side of
\eqref{tradissest} can be handled by performing the same kind of
calculations done in the proof of \cite[Proposition 7]{FG}. In
particular, the two terms in the $L^2_{tb}(0,\infty;V_{div})$-norm of
$T(t)v$ and in the $L^2_{tb}(0,\infty;V)$-norm of $T(t)\psi$ can be
estimated by writing the energy inequality between $t$ and $t+1$ and
by using the estimate
$$\Vert\nabla\mu\Vert^2\geq k_3\Vert\nabla\psi\Vert^2-k_4\Vert\psi\Vert^2,$$
where $k_3=c_0^4/2$ and $k_4=2\Vert\nabla J\Vert_{L^1}^2$,
with $c_0=\alpha+\beta+\min_{[-1,1]}F_2''>0$.  This last estimate
has been obtained in \cite{CFG} for the case of regular potentials, but
it still holds for singular potentials satisfying assumption (A6). Finally,
the two terms in the $L^{4/3}_{tb}(0,\infty;V_{div}')$-norm of
$T(t)v_t$ and in the $L^2_{tb}(0,\infty;V')$-norm of $T(t)\psi_t$
can be estimated by comparison on account of
\eqref{jw}, using also the estimates for the
$L^2_{tb}(0,\infty;V_{div})$-norm of $T(t)v$ and the
$L^2_{tb}(0,\infty;V)$-norm of $T(t)\psi$. We refer to
\cite[Proposition 7]{FG} for the details.
\end{proof}

The next proposition, which concerns with the $(\Theta_M,L^2(0,M;V_{div}'))$-closedness
property of the family $\{\mathcal{K}_h^M\}_{h\in L^2(0,M;V_{div}')}$
of trajectory spaces on $[0,M]$, requires a boundedness assumption on the potential $F$.

\begin{prop}
\label{trajclosedness} Let (A1)-(A7) hold and assume that the
potential $F$ is bounded on $(-1,1)$. Let $h_m\in
L^2(0,M;V_{div}')$ and consider
$[v_m,\psi_m]\in\mathcal{K}_{h_m}^M$ such that
$\{[v_m,\psi_m]\}$ converges to $[v,\psi]$ in $\Theta_M$ and
$\{h_m\}$ converges to $h$ strongly in $L^2(0,M;V_{div}')$. Then
$[v,\psi]\in\mathcal{K}_h^M$.
\end{prop}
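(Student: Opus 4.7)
The plan is to verify that $[v,\psi]$ belongs to $\mathcal{F}_{loc}^+$ restricted to $[0,M]$, solves \eqref{weakfor1}--\eqref{weakfor2} with forcing $h$, and satisfies the energy inequality \eqref{ei} for all $t\geq s$ and a.a.\ $s\in(0,M)$. I proceed in four steps.

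\textbf{Step 1 (compactness).} The $\Theta_M$-convergence supplies uniform bounds on $v_m$ in $L^\infty(0,M;G_{div})\cap L^2(0,M;V_{div})$, on $(v_m)_t$ in $L^{4/3}(0,M;V_{div}')$, on $\psi_m$ in $L^\infty(0,M;L^{2+2q}(\Omega))\cap L^2(0,M;V)$, and on $(\psi_m)_t$ in $L^2(0,M;V')$. By Aubin--Lions-type compactness I extract (up to a subsequence) strong convergences $v_m\to v$ in $L^2(0,M;G_{div})$ and $\psi_m\to\psi$ in $L^2(0,M;H)$, together with a.e.\ convergence on $Q_M$.

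\textbf{Step 2 (constraint $|\psi|<1$ and identification of $\mu$).} Each $\psi_m$ satisfies $|\psi_m|<1$ a.e., and the associated chemical potential $\mu_m=a\psi_m-J\ast\psi_m+F'(\psi_m)$ is in $L^2(0,M;V)$. Because $|\overline{\psi_m}|\leq m_0<1$, I would repeat verbatim the Kenmochi--Niezgódka--Paw{\l}ow argument used in the proof of Theorem~\ref{existence} (the chain \eqref{Ken}--\eqref{Ken2}) to produce $L_{m_0}\in L^2(0,M)$ with $\|F'(\psi_m(t))\|_{L^1(\Omega)}\leq L_{m_0}(t)$; the energy inequality satisfied by $[v_m,\psi_m]$ then gives uniform bounds on $\overline{\mu_m}$ in $L^2(0,M)$ and on $\nabla\mu_m$ in $L^2(0,M;H)$, hence on $\mu_m$ in $L^2(0,M;V)$. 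The distribution-function argument \eqref{DebDet1}--\eqref{DebDet2}, now applied to the sequence $\{\psi_m\}$ and the function $H$ from \eqref{H}, combined with Fatou's lemma and (A7), forces $|\psi|<1$ a.e.\ on $Q_M$. Since $F'$ is continuous on $(-1,1)$ and $\psi_m\to\psi$ a.e., we have $F'(\psi_m)\to F'(\psi)$ a.e.\ on $Q_M$, which together with the weak $L^2(0,M;V)$ compactness identifies $\mu_m\rightharpoonup\mu=a\psi-J\ast\psi+F'(\psi)$ in $L^2(0,M;V)$.

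\textbf{Step 3 (passing to the limit in the PDE).} With the strong $L^2$-convergence of $v_m,\psi_m$, the weak convergence $\mu_m\rightharpoonup\mu$, the uniform boundedness and a.e.\ convergence of $\nu(\psi_m)$ (yielding $\sqrt{\nu(\psi_m)}Dv_m\rightharpoonup\sqrt{\nu(\psi)}Dv$ as in \eqref{ddhh}), and $h_m\to h$ strongly in $L^2(0,M;V_{div}')$, all terms in \eqref{weakfor1}--\eqref{weakfor2} pass to the limit by standard arguments (exactly as at the end of the proof of Theorem~\ref{existence}). Hence $[v,\psi]$ is a weak solution on $[0,M]$ with forcing $h$; membership in $\mathcal{F}_{loc}^+$ on $[0,M]$ follows from weak lower semicontinuity of norms.

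\textbf{Step 4 (energy inequality; use of $F\in L^\infty(-1,1)$).} For each $m$ the inequality $\mathcal{E}(z_m(t))+\int_s^t(2\|\sqrt{\nu(\psi_m)}Dv_m\|^2+\|\nabla\mu_m\|^2)d\tau\leq \mathcal{E}(z_m(s))+\int_s^t\langle h_m,v_m\rangle d\tau$ holds for all $t\geq s$ and a.a.\ $s\in(0,M)$. I multiply by a nonnegative test function $\chi\in\mathcal{D}(0,M)$ in the $s$-variable and integrate, then let $m\to\infty$. On the left, the quadratic terms $\|v_m(t)\|^2$, $\|\sqrt{a}\psi_m(t)\|^2$, the Stokes dissipation and $\|\nabla\mu_m\|^2$ are handled by the weak lower-semicontinuity argument of the proof of Theorem~\ref{existence}, using the decomposition \eqref{Fqconvex} for the convex remainder of $F$. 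Crucially, the boundedness of $F$ on $(-1,1)$ together with $|\psi_m|<1$, the a.e.\ convergence and Lebesgue's theorem yield $\int_\Omega F(\psi_m(\tau))\to\int_\Omega F(\psi(\tau))$ for a.a.\ $\tau\in(0,M)$, from which $\int_0^M \int_\Omega F(\psi_m(s))\chi(s)\,ds\to\int_0^M\int_\Omega F(\psi(s))\chi(s)\,ds$ by dominated convergence. On the right, $\int_s^t\langle h_m,v_m\rangle d\tau\to\int_s^t\langle h,v\rangle d\tau$ by the strong--weak pairing. Setting $V_z(t):=\mathcal{E}(z(t))+\int_0^t(2\|\sqrt{\nu(\psi)}Dv\|^2+\|\nabla\mu\|^2)d\tau-\int_0^t\langle h,v\rangle d\tau$, the same reasoning as in the final part of the proof of Theorem~\ref{existence} gives $\int_0^t(V_z(s)-V_z(t))\chi(s)\,ds\geq 0$ for every nonnegative $\chi\in\mathcal{D}(0,t)$, whence $V_z(t)\leq V_z(s)$ for a.a.\ $s\in(0,t)$; this is the required energy inequality, so $[v,\psi]\in\mathcal{K}_h^M$.

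The main obstacle is Step~4: although the dissipation terms have standard weak lower-semicontinuity, passing to the limit in the potential term $\int_\Omega F(\psi_m(\cdot))$ requires strong convergence for a.a.\ $t$, which an unbounded $F$ would not provide (concentration near $\psi=\pm1$ could occur); this is exactly where the hypothesis $F\in L^\infty(-1,1)$ is used, and it is the reason the proposition cannot be stated without it.
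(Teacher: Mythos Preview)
Your overall strategy matches the paper's proof closely, and Steps~1, 3, and 4 are essentially what the paper does. There is, however, one genuine oversight in Step~2 that also invalidates the claim in your final paragraph.

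You assert that the energy inequality gives a uniform bound on $\|\nabla\mu_m\|_{L^2(0,M;H)}$, and later say that the boundedness of $F$ is used \emph{only} in Step~4. This is not correct. To extract a uniform-in-$m$ bound on $\int_s^t\|\nabla\mu_m\|^2\,d\tau$ from the energy inequality \eqref{eim}, you must first bound $\mathcal{E}(z_m(s))$ uniformly in $m$ for a.a.\ $s$. The $\Theta_M$-convergence controls $\|v_m(s)\|$, $\|\psi_m(s)\|_{L^{2+2q}}$, and the $J$-terms, but it says nothing about $\int_\Omega F(\psi_m(s))$; since the trajectory space $\mathcal{K}_h^M$ does not require the energy inequality at $s=0$, there is no way to propagate a bound from initial data either. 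The paper handles this exactly by invoking the boundedness of $F$ on $(-1,1)$ together with $|\psi_m|<1$ to obtain $|\mathcal{E}(z_m(s))|\leq c$ uniformly (see \eqref{encontrol}), and only then deduces the $\nabla\mu_m$ bound. So the hypothesis $F\in L^\infty(-1,1)$ enters already at the stage of bounding $\mu_m$ in $L^2(0,M;V)$, not merely when passing to the limit in the energy inequality.

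Two minor remarks: the $\overline{\mu_m}$ bound comes from the Kenmochi estimate via $\int_\Omega\mu_m=\int_\Omega F'(\psi_m)$, not from the energy inequality; and your distribution-function argument for $|\psi|<1$ is redundant here, since convergence in $\Theta_M$ presupposes $[v,\psi]\in\mathcal{F}_M$, which already contains that constraint.
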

\begin{proof}
Observe that $[v_m,\psi_m]\in\mathcal{K}_{h_m}^+$

\item{(i)} belongs to $\mathcal{F}_M$ with $\mu_m$ satisfying \eqref{prmu};

\item{(ii)} fulfills
\eqref{weakfor1}-\eqref{weakfor2} together with $\mu_m=a\psi_m-J\ast\psi_m+F'(\psi_m)$
and $h=h_m$;

\item{(iii)} satisfies the energy inequality
\begin{align}
&\mathcal{E}(z_m(t))+\int_s^t(2\|\sqrt{\nu(\psi_m)}Dv_m\|^2+\|\nabla\mu_m\|^2)d\tau\leq\mathcal{E}(z_m(s))
+\int_s^t\langle h_m(\tau),
v_m(\tau)\rangle d\tau,
\label{eim}
\end{align}
for each $m\in\mathbb{N}$, for a.a. $s\in[0,M]$ and for all
$t\in[0,M]$ with $t\geq s$. Thus, due to the convergence assumption on
the sequence  $\{[v_m,\psi_m]\}$ and to the boundedness of $F$, it
is immediate to see that there exists a constant $c>0$ such that
\begin{equation}
\label{encontrol}
|\mathcal{E}(z_m(s))|\leq c,\qquad\forall m, \; \mbox{a.a. }s\in[0,M].
\end{equation}
Therefore, \eqref{eim} and the convergence assumption on the
sequence $\{h_m\}$ imply the control
$\Vert\nabla\mu_m\Vert_{L^2(0,M;H)}\leq c$. On the other hand,
by exploiting the argument used in the proof of Theorem
\ref{existence} it is easy to find the bound
$$\Vert F'(\psi_m)\Vert_{L^1(\Omega)}\leq L_{\overline{\psi_m}},$$
with $L_{\overline{\psi_m}}\in L^2(0,M)$ and furthermore we also
have $|\overline{\psi_m}|\leq m_0$, with $m_0\in(0,1)$.
Therefore, noting that
$\int_{\Omega}\mu_m=\int_{\Omega}F'(\psi_m)$, we deduce that
$\Vert\overline{\mu_m}\Vert_{L^2(0,M)}\leq c$, with the constant
$c$ depending on the fixed parameter $m_0$. The
Poincar\'e-Wirtinger inequality then implies
\begin{align}
&\Vert\mu_m\Vert_{L^2(0,M;V)}\leq c.
\end{align}
As a consequence, there exists $\mu\in L^2(0,M;V)$ such that up to a subsequence we have
\begin{align}
&\mu_m\rightharpoonup\mu,\qquad\mbox{weakly in }L^2(0,M;V).
\label{hhx}
\end{align}
Since, as a consequence of the convergence assumption on
$\{[v_m,\psi_m]\}$, for a subsequence we have
$[v_m,\psi_m]\to[v,\psi]$ strongly in $L^2(0,M;G_{div}\times H)$
and hence $\psi_m\to\psi$ also almost everywhere in
$\Omega\times(0,M)$, then we get $\mu=a\psi-J\ast\psi+F'(\psi)$.
Using now the convergence assumptions on $\{[v_m,\psi_m]\}$ and
on $\{h_m\}$, the above mentioned strong convergence and
\eqref{hhx}, we can pass to the limit in the variational formulation for
the weak solution $[v_m,\psi_m]$ with external force $h_m$ and
deduce that $[v,\psi]$ is a weak solution with external force $h$.

Finally, in order to prove that the weak solution $[v,\psi]$ satisfies
the energy inequality on $[0,M]$ with external force $h$ we let
$m\to\infty$ in \eqref{eim}.  In particular, we rely on the
convergence
$\sqrt{\nu(\psi_m)}Dv_m\rightharpoonup\sqrt{\nu(\psi)}Dv$
weakly in $L^2(0,M;H)$ (cf. \eqref{ddhh}) and on Lebesgue's
theorem to pass to the limit in the nonlinear term
$\int_{\Omega}F(\psi_m(s))$. Hence we conclude that
$[v,\psi]\in\mathcal{K}^M_h$.
\end{proof}

\begin{oss}
\upshape{It is not difficult to see, by arguing as in \cite[Chap. XV,
Prop. 1.1]{CV2}, that the same conclusion of Proposition
\ref{trajclosedness} holds if the convergence assumption on
$\{h_m\}$ is replaced with the weak convergence
$h_m\rightharpoonup h$  in $L^2(0,M;G_{div})$. }
\end{oss}

\begin{proof}[Proof of Theorem \ref{trattex}]
In virtue of Proposition \ref{dissprop} the ball
$B_{\mathcal{F}_b^+}(0,2\Lambda_0):=\{z\in\mathcal{F}_b^+:d_{\mathcal{F}_b^+}(z,0)\leq
2\Lambda_0\}$ is a uniformly (w.r.t. $h\in\mathcal{H}_+(h_0)$)
absorbing set for the family
$\{\mathcal{K}_h^+\}_{h\in\mathcal{H}_+(h_0)}$. Such a ball is
also precompact in $\Theta_{loc}^+$. By applying the first part of
Theorem \ref{trajattract} we deduce the existence of the uniform
(w.r.t. $h\in\mathcal{H}_+(h_0)$) trajectory attractor
$\mathcal{A}_{\mathcal{H}_+(h_0)} \subset
B_{\mathcal{F}_b^+}(0,2\Lambda_0)$, which is compact in
$\Theta_{loc}^+$ and, since $T(t)$ is continuous in
$\Theta_{loc}^+$, strictly invariant. Proposition \ref{trajclosedness}
and the fact that $\mathcal{H}_+(h_0)$ is a compact metric space
imply that the united trajectory space
$\mathcal{K}_{\mathcal{H}_+(h_0)}^+$ is closed in
$\Theta_{loc}^+$. The second part of Theorem \ref{trajattract}
allows us to conclude the proof.
\end{proof}

% % % % % % % % % % % % % % % % % % % % % % % % % % % % % % % % % % % % % % % % % % % % % % % % % % % % % % % % % % % % % % % % % % % % % % % % % % % % % % % % % % % % % % % % % % % % % % % % % % % % % % % % % % % % % % % % %
% % % % % % % % % % % % % % % % % % % % % % % % % % % % % % % % % % % % % % % % % % % % % % % % % % % % % % % % % % % % % % % % % % % % % % % % % % % % % % % % % % % % % % % % % % % % % % % % % % % % % % % % % % % % % % % % % % % % % % % % % % % % % % % % % % % % % % % % % % % % % % % % % % % % % % % % % % % % % % % % % % % % % % % % % % % %
\section{Further properties of the trajectory attractor}

Let us discuss first some structural properties of the trajectory attractor.

Denote by $Z(h_0):=Z(\mathcal{H}_+(h_0))$ the set of all complete symbols
in $\omega(\mathcal{H}_+(h_0))$.
Recall that a function $\zeta:\mathbb{R}\to V_{div}'$ with $\zeta\in L_{loc}^2(\mathbb{R};V_{div}')$
is a complete symbol in $\omega(\mathcal{H}_+(h_0))$ if $\Pi_+T(t)\zeta\in\omega(\mathcal{H}_+(h_0))$
for all $t\in\mathbb{R}$, where $\Pi_+$ is the restriction operator on the semiaxis $[0,\infty)$.
It can be proved (see \cite[Section 4]{CV} or \cite[Chap. XIV, Section 2]{CV2}) that, due to the strict invariance of $\omega(\mathcal{H}_+(h_0))$, given a symbol $h\in\omega(\mathcal{H}_+(h_0))$
there exists at least one complete symbol $\widehat{h}$ (not necessarily unique) which is an extension of $h$
on $(-\infty,0]$ and such that $\Pi_+T(t)\widehat{h}\in\omega(\mathcal{H}_+(h_0))$
for all $t\in\mathbb{R}$. Note that we have $\Pi_+Z(h_0)=\omega(\mathcal{H}_+(h_0))$.

To every complete symbol $\zeta\in Z(h_0)$ there corresponds by \cite[Chap. XIV, Definition 2.5]{CV2}
(see also \cite[Definition 4.4]{CV}) the kernel $\mathcal{K}_{\zeta}$ in $\mathcal{F}_b$
which consists of the union of all complete trajectories which belong to $\mathcal{F}_b$,
%and are bounded in $\mathcal{F}_b$,
i.e., all weak solutions $z=[v,\psi]:\mathbb{R}\to G_{div}\times H$
with external force $\zeta\in Z(h_0)$ (in the sense of Definition \ref{wsdefn} with $T\in\mathbb{R}$)
satisfying \eqref{ei} on $\mathbb{R}$ (i.e., for all $t\geq s$ and for a.a. $s\in\mathbb{R}$)
that belong to $\mathcal{F}_b$. We recall that the space $(\mathcal{F}_b,d_{\mathcal{F}_b})$ is defined
as the space $(\mathcal{F}_b^+,d_{\mathcal{F}_b}^+)$ with the time interval $(0,\infty)$
replaced by $\mathbb{R}$ in the definitions of $\mathcal{F}_b^+$ and $d_{\mathcal{F}_b^+}$.
The space $(\mathcal{F}_{loc},\Theta_{loc})$ can be defined in the same way.

Set
$$\mathcal{K}_{Z(h_0)}:=\bigcup_{\zeta\in Z(h_0)}\mathcal{K}_{\zeta}.$$
Then, if the assumptions of Theorem \ref{trattex} hold with $F$ bounded in $(-1,1)$
we also have (see, e.g., \cite[Theorem 4.1]{CV})
$$\mathcal{A}_{\mathcal{H}_+(h_0)}=\mathcal{A}_{\omega(\mathcal{H}_+(h_0))}=\Pi_+\mathcal{K}_{Z(h_0)},$$
and the set $\mathcal{K}_{Z(h_0)}$ is compact in $\Theta_{loc}$ and bounded in $\mathcal{F}_b$.

On the other hand, it is not difficult to see that, under the assumptions of Theorem \ref{trattex}, 
$\mathcal{K}_{\zeta}\neq\emptyset$ for all $\zeta\in Z(h_0)$.
Indeed, by virtue of \cite[Theorem 4.1]{CV} (see also \cite[Chap. XIV, Theorem 2.1]{CV2}),
this is a consequence of the fact that the family $\{\mathcal{K}_h^+\}_{h\in\mathcal{H}_+(h_0)}$
of trajectory spaces satisfies the following condition: there exists $R>0$ such that
$B_{\mathcal{F}_b^+}(0,R)\cap\mathcal{K}_h^+\neq\emptyset$ for all $\in\mathcal{H}_+(h_0)$.
In order to check this condition fix an initial datum $z_0^{\ast}=[v_0^{\ast},\psi_0^{\ast}]$,
with $v_0^{\ast},\psi_0^{\ast}$ taken as in Theorem \ref{existence}. We know that for every
$h\in\mathcal{H}_+(h_0)$ there exists a trajectory $z_h^{\ast}\in\mathcal{K}_h^+$ such that
$z_h^{\ast}(0)=z_0^{\ast}$ and such that the energy inequality \eqref{ei} holds
for all $t\geq s$ and for a.a. $s\in(0,\infty)$, {\itshape including $s=0$}. Arguing as in Proposition \ref{dissprop} (cf. \eqref{ta1} written for $s=0$ and all $t\geq 0$) we get an estimate
of the form $d_{\mathcal{F}_b^+}(z_h^{\ast},0)\leq \Lambda(z_0^{\ast},h_0)$ (see also \eqref{tb}),
where the positive constant $\Lambda$ depends on $\mathcal{E}(z_0^{\ast})$ and on the norm
$\|h_0\|_{L^2_{tb}(0,\infty;V_{div}')}$. The above condition is thus fulfilled by choosing $R=\Lambda(z_0^{\ast},h_0)$.

As far as the attraction properties are concerned, we observe that, due to compactness results, 
the trajectory attractor attracts the subsets
of the family $\mathcal{B}_{\mathcal{H}_+(h_0)}$ in some strong topologies. Indeed, setting
\begin{align}
&\mathbb{X}_{\delta_1,\delta_2}:=H^{\delta_1}(\Omega)^d\times H^{\delta_2}(\Omega),\qquad
\mathbb{Y}_{\delta_1,\delta_2}:=H^{-\delta_1}(\Omega)^d\times (H^{\delta_2}(\Omega))',
\end{align}
where $0\leq\delta_1,\delta_2<1$ and using the compact embeddings
\begin{align}
&
L^2(0,M;V_{div}\times V)\cap W^{1,4/3}(0,M;V_{div}'\times V')\hookrightarrow\hookrightarrow
 L^2(0,M;\mathbb{X}_{\delta_1,\delta_2}),\nonumber\\
&L^{\infty}(0,M;G_{div}\times H)\cap W^{1,4/3}(0,M;V_{div}'\times V')\hookrightarrow\hookrightarrow
C([0,M];\mathbb{Y}_{\delta_1,\delta_2}),\nonumber
\end{align}
then Theorem \ref{trattex}
implies the following (see \cite[Chap. XIV, Theorem 2.2]{CV2})
\begin{cor}
Let (A1)-(A7) hold and assume $h_0\in
L^2_{tb}(0,\infty;V_{div}')$. Then, for every $0\leq\delta_1,\delta_2<1$
the trajectory attractor $\mathcal{A}_{\mathcal{H}_+(h_0)}$ from Theorem \ref{trattex}
is compact in $L^2_{loc}([0,\infty);\mathbb{X}_{\delta_1,\delta_2})\cap C([0,\infty);\mathbb{Y}_{\delta_1,\delta_2})$,
bounded in $L^2_{tb}(0,\infty);\mathbb{X}_{\delta_1,\delta_2})\cap C_b([0,\infty);\mathbb{Y}_{\delta_1,\delta_2})$,
and for every
$B\in\mathcal{B}_{\mathcal{H}_+(h_0)}$ and
every $M>0$ we have
\begin{align}
&\mbox{dist}_{L^2(0,M;\mathbb{X}_{\delta_1,\delta_2})}
\Big(\Pi_{[0,M]}T(t)B,\Pi_{[0,M]}\mathcal{A}_{\mathcal{H}_+(h_0)}\Big)\to 0,\nonumber\\
&\mbox{dist}_{C([0,M];\mathbb{Y}_{\delta_1,\delta_2})}
\Big(\Pi_{[0,M]}T(t)B,\Pi_{[0,M]}\mathcal{A}_{\mathcal{H}_+(h_0)}\Big)\to 0,\nonumber
\end{align}
as $t\to+\infty$, where $\mbox{dist}_{X}(A,B)$ denotes the Hausdorff semidistance in the
Banach space $X$ between $A,B\subset X$, and $\Pi_{[0,M]}$ is the restriction operator to the
interval $[0,M]$.
\end{cor}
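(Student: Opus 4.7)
The plan is to apply the abstract result \cite[Chap. XIV, Theorem 2.2]{CV2} together with the two compact embeddings displayed immediately before the statement, using as input the facts established in Theorem \ref{trattex}: $\mathcal{A}_{\mathcal{H}_+(h_0)}$ is bounded in $\mathcal{F}_b^+$, compact in $\Theta_{loc}^+$, and uniformly attracts the family $\mathcal{B}_{\mathcal{H}_+(h_0)}$ in $\Theta_{loc}^+$. The role of the two compact embeddings is to upgrade these $\Theta_{loc}^+$-statements to the stronger norms in which the corollary is phrased.

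First I would handle the compactness. Since $\mathcal{A}_{\mathcal{H}_+(h_0)}\subset B_{\mathcal{F}_b^+}(0,2\Lambda_0)$, every $z=[v,\psi]$ in the trajectory attractor satisfies the uniform bounds built into $\mathcal{F}_b^+$, in particular
\[
\|v\|_{L^\infty(0,\infty;G_{div})}+\|v\|_{L^2_{tb}(0,\infty;V_{div})}+\|v_t\|_{L^{4/3}_{tb}(0,\infty;V_{div}')}\le C,
\]
and the analogous bounds for $\psi$. Restricting to an arbitrary interval $[0,M]$, the first compact embedding yields precompactness of $\mathcal{A}_{\mathcal{H}_+(h_0)}|_{[0,M]}$ in $L^2(0,M;\mathbb{X}_{\delta_1,\delta_2})$, while the second gives precompactness in $C([0,M];\mathbb{Y}_{\delta_1,\delta_2})$. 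Since $\mathcal{A}_{\mathcal{H}_+(h_0)}$ is closed in the weaker topology $\Theta_{loc}^+$ and strong convergence implies $\Theta_{loc}^+$-convergence, every strong limit of a sequence from the attractor lies in the attractor; hence it is compact in both strong norms on $[0,M]$. A diagonal extraction over $M\in\mathbb{N}$ upgrades this to compactness in $L^2_{loc}([0,\infty);\mathbb{X}_{\delta_1,\delta_2})\cap C([0,\infty);\mathbb{Y}_{\delta_1,\delta_2})$, while the boundedness in the $L^2_{tb}$- and $C_b$-norms follows by applying the embedding inequalities on each translate $[t,t+1]$ and using the uniform $\mathcal{F}_b^+$-bound.

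Second I would prove the convergence of the Hausdorff semidistances by contradiction. Suppose that
\[
\mathrm{dist}_{L^2(0,M;\mathbb{X}_{\delta_1,\delta_2})}\bigl(\Pi_{[0,M]}T(t)B,\Pi_{[0,M]}\mathcal{A}_{\mathcal{H}_+(h_0)}\bigr)\not\to 0
\]
for some $B\in\mathcal{B}_{\mathcal{H}_+(h_0)}$. Then there exist $\eta>0$, $t_n\to+\infty$, and $z_n\in B$ for which $\Pi_{[0,M]}T(t_n)z_n$ stays at distance $\ge\eta$ from $\Pi_{[0,M]}\mathcal{A}_{\mathcal{H}_+(h_0)}$ in that strong norm. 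By Proposition \ref{dissprop} the sequence $\{T(t_n)z_n\}$ is, for $t_n$ large, bounded in $\mathcal{F}_b^+$; applying the first compact embedding on $[0,M]$ we extract a subsequence with $\Pi_{[0,M]}T(t_n)z_n\to w$ strongly in $L^2(0,M;\mathbb{X}_{\delta_1,\delta_2})$. The uniform $\Theta_{loc}^+$-attraction of $\mathcal{A}_{\mathcal{H}_+(h_0)}$ guarantees $w\in\Pi_{[0,M]}\mathcal{A}_{\mathcal{H}_+(h_0)}$, contradicting the bound $\eta$. The claim for $C([0,M];\mathbb{Y}_{\delta_1,\delta_2})$ is proved identically, using the second compact embedding instead.

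The main obstacle is the passage from the weak topology $\Theta_{loc}^+$ to the two strong norms. The key point is that the uniform $\mathcal{F}_b^+$-bound provided by Proposition \ref{dissprop} is exactly what is needed to activate the Aubin--Lions-type embeddings, after which the uniqueness of the $\Theta_{loc}^+$-limit forces the strong limit to lie in the trajectory attractor. No further difficulty arises beyond what is already packaged in the abstract machinery of \cite[Chap. XIV]{CV2}.
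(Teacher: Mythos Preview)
Your proposal is correct and follows exactly the approach indicated in the paper: the corollary is stated as a direct consequence of Theorem \ref{trattex} via the two compact embeddings and the abstract result \cite[Chap.~XIV, Theorem~2.2]{CV2}, with no further proof given. Your elaboration of the compactness and contradiction arguments simply unpacks the standard mechanism behind that abstract theorem.
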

Let us now define, for every $B\subset\mathcal{K}_{\mathcal{H}_+(h_0)}^+$, the sections
$$B(t):=\Big\{[v(t),\psi(t)]:[v,\psi]\in B \Big\}\subset\mathbb{Y}_{\delta_1,\delta_2},
\qquad t\geq 0.$$
Similarly we set
\begin{align}
&
\mathcal{A}_{\mathcal{H}_+(h_0)}(t):=\Big\{[v(t),\psi(t)]:[v,\psi]\in\mathcal{A}_{\mathcal{H}_+(h_0)} \Big\}\subset
\mathbb{Y}_{\delta_1,\delta_2},
% H^{-\delta_1}(\Omega)^d\times (H^{\delta_2}(\Omega))',
\qquad t\geq 0,\nonumber\\
&
\mathcal{K}_{Z(h_0)}(t):=\Big\{[v(t),\psi(t)]:[v,\psi]\in \mathcal{K}_{Z(h_0)} \Big\}\subset
\mathbb{Y}_{\delta_1,\delta_2},
%H^{-\delta_1}(\Omega)^d\times (H^{\delta_2}(\Omega))',
\qquad t\in\mathbb{R}.\nonumber
\end{align}
Then, as a further consequence of Theorem \ref{trattex} we have
(see \cite[Chap. XIV, Definition 2.6, Corollary 2.2]{CV2}) the following
\begin{cor}
Let (A1)-(A7) hold and assume $h_0\in
L^2_{tb}(0,\infty;V_{div}')$. Then the bounded subset
$$\mathcal{A}_{gl}:=\mathcal{A}_{\mathcal{H}_+(h_0)}(0)=\mathcal{K}_{Z(h_0)}(0)$$
is the uniform (w.r.t. $h\in\mathcal{H}_+(h_0)$) global attractor in $\mathbb{Y}_{\delta_1,\delta_2}$,
%$H^{-\delta_1}(\Omega)^d\times (H^{\delta_2}(\Omega))'$,
$0<\delta_1,\delta_2\leq 1$, of system \eqref{eq1}--\eqref{eq5}, namely
(i) $\mathcal{A}_{gl}$ is compact in $\mathbb{Y}_{\delta_1,\delta_2}$,
%$H^{-\delta_1}(\Omega)^d\times (H^{\delta_2}(\Omega))'$,
(ii) $\mathcal{A}_{gl}$ satisfies the attracting property
$$\mbox{dist}_{\mathbb{Y}_{\delta_1,\delta_2}}(B(t),\mathcal{A}_{gl})\to 0,
\qquad t\to+\infty,$$
for every $B\in\mathcal{B}_{\mathcal{H}_+(h_0)}$, and (iii) $\mathcal{A}_{gl}$
is the minimal set satisfying (i) and (ii).
\end{cor}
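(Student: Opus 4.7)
The plan is to deduce the statement by passing to the $t=0$ section of the objects produced by Theorem \ref{trattex} and the preceding compactness corollary; this is the abstract route outlined in \cite[Chap.~XIV, Cor.~2.2]{CV2}, so most of the work reduces to verifying that its hypotheses are in force.

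For (i), the identity $\mathcal{A}_{\mathcal{H}_+(h_0)}(0)=\mathcal{K}_{Z(h_0)}(0)$ is the $t=0$ slice of the structural relation $\mathcal{A}_{\mathcal{H}_+(h_0)}=\Pi_+\mathcal{K}_{Z(h_0)}$ recorded above, and compactness of $\mathcal{A}_{gl}$ in $\mathbb{Y}_{\delta_1,\delta_2}$ follows because the preceding Corollary supplies compactness of $\mathcal{A}_{\mathcal{H}_+(h_0)}$ in $C([0,\infty);\mathbb{Y}_{\delta_1,\delta_2})$ (up to a harmless continuous embedding between the two $\delta$-ranges) and the evaluation $z\mapsto z(0)$ is continuous into $\mathbb{Y}_{\delta_1,\delta_2}$. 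For (ii), given $B\in\mathcal{B}_{\mathcal{H}_+(h_0)}$, I would apply the same Corollary with $M=1$; since $(T(t)z)(0)=z(t)$, the $C([0,1];\mathbb{Y}_{\delta_1,\delta_2})$-distance dominates the $\mathbb{Y}_{\delta_1,\delta_2}$-distance between $B(t)$ and $\mathcal{A}_{\mathcal{H}_+(h_0)}(0)=\mathcal{A}_{gl}$, so the required attraction is immediate.

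The substantive step is minimality (iii). I would take $A\subset\mathbb{Y}_{\delta_1,\delta_2}$ compact and satisfying (ii), fix $z_0\in\mathcal{A}_{gl}=\mathcal{K}_{Z(h_0)}(0)$ together with a complete bounded trajectory $z\in\mathcal{K}_{Z(h_0)}$ realizing $z(0)=z_0$, and consider the backward shifts $w_n(\cdot):=z(\cdot-n)|_{[0,\infty)}$. Each $w_n$ belongs to some $\mathcal{K}_h^+$ with $h\in\omega(\mathcal{H}_+(h_0))\subset\mathcal{H}_+(h_0)$, and the family $\{w_n\}_{n\in\mathbb{N}}$ lies in $\mathcal{B}_{\mathcal{H}_+(h_0)}$ because $\mathcal{K}_{Z(h_0)}$ is bounded in $\mathcal{F}_b$. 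Applying (ii) to this family and evaluating at $t=n$, where $w_n(n)=z_0$, forces $\mbox{dist}_{\mathbb{Y}_{\delta_1,\delta_2}}(z_0,A)$ to be arbitrarily small, hence $z_0\in A$ by compactness of $A$, yielding $\mathcal{A}_{gl}\subset A$. The main obstacle I anticipate is precisely this boundedness claim for $\{w_n\}$ in $\mathcal{F}_b^+$, which reduces to the global boundedness of complete bounded trajectories and ultimately to the dissipative estimate of Proposition \ref{dissprop} applied along the whole real line.
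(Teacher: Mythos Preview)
Your proposal is correct and follows the same route as the paper: both deduce the corollary directly from Theorem~\ref{trattex} and the preceding compactness corollary by invoking \cite[Chap.~XIV, Definition~2.6, Corollary~2.2]{CV2}. The paper in fact gives no detailed argument at all beyond this citation, so your sketch of (i)--(iii), including the standard backward-shift argument for minimality, supplies more detail than the original while staying within the same abstract framework.
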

\EE

\begin{oss}
\upshape{In the 2D case the energy identity might be exploited
to show the convergence to the trajectory attractor
in the strong topology of the original phase space. 
This was done in \cite{CVZ} for a reaction-diffusion system
without uniqueness.}
\end{oss}

\bigskip

\noindent {\bf Acknowledgments.} This work was partially supported
by the Italian MIUR-PRIN Research Project 2008 ``Transizioni di fase,
isteresi e scale multiple''. The first author was also supported by the
FTP7-IDEAS-ERC-StG Grant $\sharp$200497(BioSMA) and the
FP7-IDEAS-ERC-StG Grant \#256872 (EntroPhase).

\end{document}